% Metainfo {{{1
\documentclass{amsart}

\author{Andy Hammerlindl}
%\date{\today}
\title{Partial hyperbolicity on 3-dimensional nilmanifolds}

\usepackage{amsmath}
\usepackage{amssymb}
\usepackage{amsfonts}
\usepackage{amsthm}
\usepackage{amscd}
\usepackage{graphicx}
\usepackage{subfig}
\usepackage{hyphenat}

% Commands {{{1
\newcommand{\defmath}[2]{\def#1{{\ensuremath{#2}%
}%
}}

% Adds a -1 superscript without much change to the horizontal sizing.
\newcommand{\smallinv}{^{-1}\hspace{-0.13in}}

\defmath\C{\mathbb{C}}
\defmath\R{\mathbb{R}}
\defmath\T{\mathbb{T}}
\defmath\Q{\mathbb{Q}}
\defmath\N{\mathbb{N}}
\defmath\Z{\mathbb{Z}}

\defmath\LL{\mathcal{L}}
\defmath\Linear{{L}}
\defmath\H{\mathcal{H}}
\defmath\HG{\H/\Gamma}
\defmath\h{\mathfrak{h}}

\defmath\inv{^{-1}}

\defmath\cplus{c^{+}}
\defmath\cminus{c^{-}}

\defmath\Eu{E^u}
\defmath\Es{E^s}
\defmath\Ec{E^c}
\defmath\Ecs{E^{cs}}
\defmath\Ecu{E^{cu}}

\defmath\Euf{E^u_f}
\defmath\Esf{E^s_f}
\defmath\Ecf{E^c_f}
\defmath\Ecuf{E^{cu}_f}
\defmath\Ecsf{E^{cs}_f}
\defmath\Eug{E^u_g}
\defmath\Esg{E^s_g}
\defmath\Ecg{E^c_g}
\defmath\Ecug{E^{cu}_g}
\defmath\Ecsg{E^{cs}_g}

\defmath\Wuf{W^u_f}
\defmath\Wsf{W^s_f}
\defmath\Wcf{W^c_f}
\defmath\Wcuf{W^{cu}_f}
\defmath\Wcsf{W^{cs}_f}
\defmath\Wusf{W^{us}_f}
\defmath\Wug{W^u_g}
\defmath\Wsg{W^s_g}
\defmath\Wcg{W^c_g}
\defmath\Wcug{W^{cu}_g}
\defmath\Wcsg{W^{cs}_g}
\defmath\Wusg{W^{us}_g}

\defmath\piu{\pi^u}
\defmath\pis{\pi^s}

\defmath\gammahat{\hat\gamma}
\defmath\Cph{C_{\textrm{ph}} }

\defmath\fstar{{f_0}_*}
\defmath\gstar{{g_0}_*}
\defmath\fstark{{f^k_0}_*}
\defmath\gstark{{g^k_0}_*}

\defmath\id{\operatorname{\textit{id}} }
\defmath\image{\operatorname{image}}
\defmath\supp{\operatorname{supp}}
\defmath\vol{\operatorname{volume}}

\defmath\piN{\pi_1(N)}

\defmath\hth{h_\theta}

%\defmath\W{\mathcal{W}}
\defmath\W{W}
\defmath\Ws{\W^s}
\defmath\Wu{\W^u}
\defmath\Wc{\W^c}
\defmath\Wcs{\W^{cs}}
\defmath\Wcu{\W^{cu}}
\defmath\Wus{\W^{us}}

\defmath\yux{y\in\Wuf(x)}
\defmath\ysx{y\in\Wsf(x)}
\defmath\ycx{y\in\Wcf(x)}

\defmath\sigmaL{\sigma_\Lambda}
\defmath\sigmaG{\sigma_\Gamma}
\defmath\alphaG{\alpha_\Gamma}

\defmath\RZ{R_{\Z}}
\defmath\spacearrow{\quad\Rightarrow\quad}
\defmath\spacedblarrow{\quad\Leftrightarrow\quad}

\defmath\diam{\operatorname{diam}}
\defmath\dist{\operatorname{dist}}
\defmath\length{\operatorname{length}}
\defmath\height{\operatorname{height}}
\defmath\Area{\operatorname{Area}}
\defmath\disp{\operatorname{disp}}
\defmath\Dom{\operatorname{Dom}}

\defmath\sumc{\sideset{}{^c}\sum}

\newcommand{\deldel}[1]{\frac{\partial}{\partial#1}}

\defmath\eqdef{\buildrel \text{def}\over =}

\newcommand{\BB}[3]{B[{#1,#2,#3}]}

\newcommand{\bigabs}[1]{\bigl|#1\bigr|}

\newcommand{\commdiag}[8]{
    \begin{CD}
           {#1}  @> {#2} >>    {#3}  \\
        @VV{#4}V            @VV{#5}V \\
           {#6}  @> {#7} >>    {#8}
    \end{CD}
}

\newcommand{\blockmat}[4] {
    \left(
    \begin{array}{c|c}
        {#1} & {#2} \\ \hline
        {#3} & {#4}
    \end{array}
    \right)
}

\newcommand{\GPS} {Global Product Structure}
\newcommand{\CL} {Central Shadowing Lemma}

\newtheorem{thm}{Theorem}[section]
\newtheorem{cor}[thm]{Corollary}
\newtheorem{lemma}[thm]{Lemma}

\newtheorem{prop}[thm]{Proposition}

\theoremstyle{remark}
\newtheorem*{remark}{\bf Remark}

% Prologue {{{1
\begin{document}

\begin{abstract}
Every partially hyperbolic diffeomorphism on a 3-dimensional nilmanifold is
leaf conjugate to a nilmanifold automorphism.  Moreover, if the nilmanifold is
not the 3-torus, the center foliation is an invariant circle bundle.
\end{abstract}

\maketitle

% Document {{{1
% Introduction {{{1
\section{Introduction} 

A defining achievement of the study of dynamical systems was the
classification of all Anosov systems on nilmanifolds due to J.~Franks and
A.~Manning.
A diffeomorphism $f:M\to M$ is {\em Anosov} if
the tangent bundle of the manifold $M$ splits into two invariant
subbundles
\[ TM=\Eu\oplus\Es,\]
the unstable subbundle $\Eu$ which
is strongly expanded by $Tf$ and the stable subbundle $\Es$ which is strongly
contracted.  A {\em nilmanifold} is a manifold constructed as the quotient
space of a nilpotent Lie group.  A {\em nilmanifold isomorphism} is a
homeomorphism between nilmanifolds that is the quotient of a Lie group
isomorphism on the covering spaces.  The classification result is that every
Anosov diffeomorphism on a compact nilmanifold is topologically conjugate to a
nilmanifold isomorphism \cite{Franks2}\cite{Franks1}\cite{Manning}.
In other words, an
infinitesimal condition on the derivative dictates the global behaviour of the
diffeomorphism, and the algebraic structure of the manifold gives an algebraic
structure to the maps acting on it.

It is conjectured that all Anosov diffeomorphisms occur on nilmanifolds or on
spaces finitely covered by nilmanifolds, and thus that all of these systems can
be understood in terms of this algebraic classification.  To broaden our
understanding of dynamics in general, we must look beyond the uniform
hyperbolicity of Anosov systems.  One such generalization is the notion of
partial hyperbolicity.  A diffeomorphism is {\em partially hyperbolic} if the
tangent bundle splits into three subbundles
\[ TM = \Eu\oplus\Ec\oplus\Es. \]
Here, the center subbundle $\Ec$ may contract or expand slightly, but it
is dominated by the strong expansion and contraction of the unstable and
stable subbundles.
A precise definition will be given in Section \ref{defs}.

In contrast to the stable and unstable directions, there is almost no
restriction on the dynamics in the center direction, and thus, in general, the
center is badly behaved.  To attempt a reasonable classification in the style
of Franks and Manning, we must ignore the ``unhyperbolic'' behaviour along the
center direction.  Suppose $f:M\to M$ and $g:M\to M$ are partially hyperbolic
and there is a foliation tangent to the center subbundle of each of the two
diffeomorphisms.  A {\em (center) leaf conjugacy} between $f$ and $g$ is a
homeomorphism $h:M\to M$, such that for every center leaf $\LL$ of $f$,
$h(\LL)$ is a center leaf of $g$ and
\[hf(\LL)=gh(\LL).\]
A leaf conjugacy does not preserve the arbitrary dynamics which happens along
center leaves, but it does preserve the hyperbolic dynamics which acts on the
center leaves.

The most easily understood partially hyperbolic systems occur on
manifolds of dimension three, where each of the subbundles $\Eu$, $\Ec$ and
$\Es$ is one-dimensional.  In this paper, we prove the following
classification result.

\begin{thm} \label{mainthm}
    Any partially hyperbolic diffeomorphism on a compact, three-\\dimensional
    nilmanifold is leaf conjugate to a nilmanifold automorphism.
\end{thm}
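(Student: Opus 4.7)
The three-dimensional compact nilmanifolds fall into two classes: the torus $\T^3$, where the result is already known by the Hammerlindl--Potrie classification on $\T^3$, and the Heisenberg nilmanifolds $\HG$ arising from the Heisenberg group $\H$ and a cocompact lattice $\Gamma$. My plan focuses on the Heisenberg case and builds a leaf conjugacy to a nilmanifold automorphism using the canonical circle fibration $p:\HG\to\T^2$ whose fibers are the cosets of the center $Z\subset\H$.

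First I would exploit the algebraic rigidity of $\piN$. Because $Z$ is the characteristic center of $\H$, any automorphism of $\Gamma$ preserves the subgroup $\Gamma\cap Z$, so $f_*$ on $\piN$ descends to an automorphism of $\Gamma/(\Gamma\cap Z)\cong\Z^2$. This gives a hyperbolic matrix $A_0\in GL(2,\Z)$: the partial hyperbolicity of $f$ forces the induced action on the abelianization to have an eigenvalue of modulus $>1$ and one of modulus $<1$ (since the center $\Ec$ projects trivially in a compatible sense, the only room for the expansion/contraction of $\Eu\oplus\Es$ is in the quotient direction $\R^2$). Lifting the hyperbolic matrix back to $\H$ produces the candidate nilmanifold automorphism $g:\HG\to\HG$.

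Next I would establish dynamical coherence of $f$ and identify its center foliation with the circle fibration $p$. This is what I expect to be the main obstacle. In the universal cover $\H$ (which has quadratic growth and is quasi-isometric to $\R^3$ with the Heisenberg metric), I would lift $\Eu$ and $\Es$ to $\H$ and show that their leaves are quasi-isometrically embedded copies of $\R$, in the spirit of the Brin--Burago--Ivanov arguments for $\T^3$. The polynomial growth, together with a global product structure for the lifted strong foliations, should force any curve everywhere tangent to $\Ec$ to remain a bounded distance from a fixed $Z$-coset. Since $Z$-cosets descend to the circle fibers of $p$, this both proves unique integrability of $\Ec$ and identifies $\Wc$ with the fibers of $p$, giving the invariant circle bundle promised in the abstract. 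Integrability of $\Ecs$ and $\Ecu$ then follows from the classical Brin argument using $\Wc$ and the strong foliations.

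Once $\Wc$ is known to be the circle bundle $p$, the leaf conjugacy is almost formal. The map $f$ permutes the fibers of $p$ and therefore descends to a homeomorphism $\bar f:\T^2\to\T^2$, homotopic to the hyperbolic automorphism $A_0$; Franks--Manning supplies a topological conjugacy $\bar h\bar f=A_0\bar h$ on $\T^2$. I would lift $\bar h$ to a continuous $h:\HG\to\HG$ sending each $p$-fiber to a $p$-fiber using the global product structure in the universal cover (intersecting each lifted center leaf with a fixed center-stable leaf of $g$, which by quasi-isometry yields a single point). The identity $h f(\LL)=g h(\LL)$ on center leaves then reduces to $\bar h\bar f=A_0\bar h$, and continuity of $h$ is inherited from the continuity of the leaf spaces and of $\bar h$. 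The hardest technical step in this last paragraph is verifying that the universal-cover intersection defining $h$ is single-valued and continuous, which is exactly where the quasi-isometric and global product structure estimates from the coherence step are reused.
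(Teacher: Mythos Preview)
Your outline is broadly aligned with the paper's strategy (algebraic part gives a hyperbolic $2\times 2$ matrix; prove dynamical coherence on $\H$; use a Franks semi-conjugacy to $\T^2$; build the leaf conjugacy), but two steps are genuinely underpowered.

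First, the coherence/identification step is not in workable order. Showing that every $C^1$ curve tangent to $\Ec$ stays a bounded distance from a $Z$-coset does \emph{not} prove unique integrability of $\Ec$: nothing there rules out branching. The standard route, and the paper's, is the reverse: prove $\Ecs$ (codimension one) is uniquely integrable first, using growth estimates on unstable curves versus $cs$-paths, then get $\Ec=\Ecs\cap\Ecu$. (Incidentally, $\H$ with its left-invariant metric is \emph{not} quasi-isometric to Euclidean $\R^3$; its volume growth has degree $4$, which is precisely what makes the growth comparisons delicate. Quasi-isometry of the \emph{unstable foliation} in the sense of Brin is a valid alternative route, as in Parwani's work, but that is not what you wrote.) More importantly, the identification of $\Wc$ with the $Z$-fibers requires a dynamical characterization of center leaves, not just a static distance bound. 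The paper isolates this as the Central Shadowing Lemma: $p,q$ lie on the same center leaf of the lift $f$ if and only if $\|Pf^n(p)-Pf^n(q)\|$ stays bounded for all $n\in\Z$, where $P:\H\to\R^2$ is the projection killing $z$. This is what lets you conclude $(0,0,\tfrac{1}{k})\cdot p\in\Wc(p)$ and hence that center leaves project to circles, and it is also what identifies the fibers of the Franks map $H$ with center leaves. Your sketch has no substitute for it.

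Second, your construction of $h$ is too optimistic. Intersecting each lifted center leaf of $f$ with a fixed center-stable leaf of $g$ gives, at best, a section of $H$ over $\R^2$; there is no reason this section is $\Gamma$-equivariant, so the resulting $h$ need not descend to $\HG$. The paper handles this carefully: it first builds \emph{some} continuous section $\sigma:\R^2\to\H$ via Global Product Structure (intersecting $\Wu(p_0)$ with $\Wcs(q)$, then $\Ws$ with $\Wc$), then writes every candidate leaf conjugacy as $(x,y,z)\mapsto\varphi_{\rho(x,y)+z}(\sigma(x,y))$ for a flow $\varphi_t$ along $\Ec$ normalized so that $\varphi_1(p)=(0,0,1)\cdot p$, and finally constructs the offset $\rho$ edge-by-edge on the boundary of the unit square so that the identifications forced by the generators $(1,0,0)$ and $(0,1,0)$ of $\Gamma$ match. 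This boundary-matching is where the actual work of descending to $\HG$ lives, and it does not come for free from the quasi-isometric estimates.
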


There are two nilpotent, three-dimensional Lie groups, and so there are two
families of compact, three-dimensional nilmanifolds.  The first group consists
only of the 3-torus and the nilmanifold automorphisms on the 3-torus are
exactly the toral automorphisms given by $3\times 3$ invertible matrices with
integer entries.  Theorem \ref{mainthm} for the specific case of the 3-torus
was proven in \cite{ham-thesis}.
Hence, this paper deals exclusively with nilmanifolds in the second family,
given by quotients of the Heisenberg group, $\H$, the Lie group of matrices of
the form
    \[\begin{pmatrix}
        1 & x & z \\ 0 & 1 & y \\ 0 & 0 & 1
    \end{pmatrix}\quad(x,y,z\in\R)\]
under the group operation of matrix multiplication.
For purposes of brevity, we will often write the above triangular matrix as
$(x,y,z)\in\H$.  In this notation, the group operation is
\[
    (a,b,c)\cdot(x,y,z) = (a+x, b+y, z+c+ay).
\]

The prototypical example of a nilmanifold is the quotient of $\H$ by a lattice
consisting of matrices with integer entries.  A fundamental domain of this
quotient is the unit ``cube''
\[
    \{(x,y,z)\in\H : 0 \le x,y,z \le 1 \}.
\]
\begin{figure}[t]
\begin{center}
\includegraphics{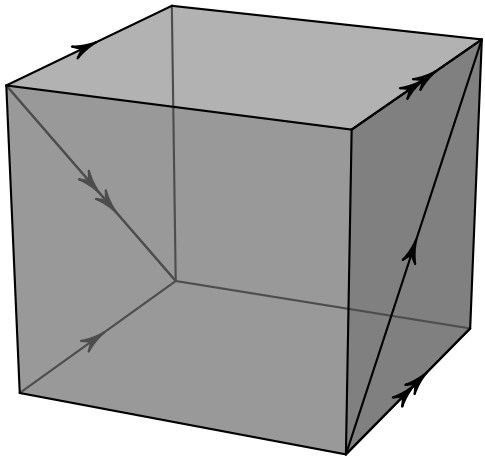}
\end{center}
\caption{A nilmanifold can be constructed from a cube by identifying left and
right faces in a slanted manner.  The other faces are identified by simple
translation.}
\end{figure}
To obtain the nilmanifold from the cube, identify the top face with the
bottom face and the front face with the back face, as one would in
constructing the 3-torus.  The group multiplication
\[
    (1,0,0)\cdot(x,y,z) = (x+1,y,z+y)
\]
tells us to join left and right faces of the cube by the identification
\[
    (0,y,z) \sim (1,y,z+y) \quad \text{(mod $1$)}.
\]
This slanted identification in the $x$-direction frustrates analysis on the
nilmanifold.  To overcome this, we almost exclusively work on the universal
cover, $\H$, and with sets which are bounded in the $x$-direction.

\medskip

To construct an example of a nilmanifold automorphism which is partially
hyperbolic, first consider the Lie group automorphism
\[
    \Phi:\H\to\H,\quad
    (x,y,z)\mapsto(2x+y, x+y, z + x^2 + \tfrac{1}{2} y^2 + xy).
\]
This maps the lattice
\[
    \Gamma = \{ (x,y,z)\in\H : x,y\in\Z,\ z\in\tfrac{1}{2}\Z \}
\]
onto itself, and so $\Phi$ quotients down to a nilmanifold automorphism
$\Phi_0:\HG\to\HG$.  The derivative at $(0,0,0)$, and hence at any point, is
given by the matrix
    \[\begin{pmatrix}
        2 & 1 & 0 \\ 1 & 1 & 0 \\ 0 & 0 & 1
    \end{pmatrix}.\]
This has eigenvalues $\lambda>1>\lambda^{-1}$.  Associating the three
corresponding eigenspaces with $\Eu$, $\Ec$, and $\Es$ respectively, one sees
that $\Phi_0$ is partially hyperbolic.  The $\Ec$ direction is given by the
vector field $\deldel{z}$.  The center foliation is a vertical foliation in
the cube where, due to the simple identification of the top face of the cube
with the bottom, all leaves are circles.

For every linear automorphism, $L:T_0\H \to T_0\H$, of the tangent space of
the identity, there is a group automorphism $\H \to \H$ of the form
\[ \Phi(x,y,z) = (A(x,y),\ cz + p(x,y)) \]
where $A$ is a $2\times 2$ matrix, $c\in\R$, and $p$ is a
quadratic polynomial, and such that the derivative $d\Phi$ at the identity is
equal to $L$.  Given $L$, the corresponding $\Phi$ can always be found by
expanding the equations $d\Phi|_0=L$ and $\Phi(u) \cdot \Phi(v) = \Phi(u \cdot
v)$, then solving for the coefficients of $p$.
Since every Lie group automorphism is uniquely determined by its derivative at
the identity, every automorphism of the Heisenberg group must be of this form.
Such an automorphism $\Phi$ is partially hyperbolic if and only if the matrix
$A$ is hyperbolic.

Any compact nilmanifold constructed from the Heisenberg group is
diffeomorphic to $\HG$ where the lattice
$\Gamma$ is of the form
\[
    \Gamma = \{ (x,y,z)\in\H : x,y\in\Z,\ z\in\tfrac{1}{k}\Z \}
\]
for some $k$.  In this light, Theorem \ref{mainthm} may be restated as
follows.
\begin{cor} \label{maincor}
    Any partially hyperbolic diffeomorphism $f$ on a compact, three-dimensional
    nilmanifold (other than the 3-torus) is topologically conjugate to a
    homeomorphism of the form
    \[
        N\to N,\quad (x,y,z)\mapsto(A(x,y), g(x,y,z))
    \]
    where $N$ is the unit cube $[0,1]^3$ with some identification of faces,
    $A$ is a hyperbolic toral automorphism (on the 2-torus), and $g$ is a
    continuous function $N\to\R/\Z$.

    Moreover, all of the center leaves of $f$ are circles and the
    conjugacy maps the center foliation of $f$ to the vertical foliation on
    $[0,1]^3$.
\end{cor}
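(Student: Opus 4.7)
The plan is to deduce the corollary directly from Theorem \ref{mainthm} by unpacking the structure of nilmanifold automorphisms on $\HG$. By Theorem \ref{mainthm}, there is a homeomorphism $h:\HG\to\HG$ that is a leaf conjugacy between $f$ and some nilmanifold automorphism $\Phi_0$. As recorded in the introduction, $\Phi_0$ is induced by a Lie group automorphism $\Phi:\H\to\H$ of the form $\Phi(x,y,z)=(A(x,y),\,cz+p(x,y))$. Because $\Phi$ descends to $\HG$ we have $A\in GL(2,\Z)$, and because $\Phi_0$ is partially hyperbolic $A$ is hyperbolic, so $A$ is a hyperbolic toral automorphism of $\T^2$.

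Next I would verify that the center foliation of $\Phi_0$ is the vertical foliation on the cube $N=[0,1]^3$. The $z$-axis is an eigenspace of $d\Phi|_0$ with eigenvalue $c$, while the other two eigenvalues are those of $A$, which strongly expand and contract. Hence $\Ec$ is the $\deldel{z}$ direction, the center foliation of $\Phi_0$ is the vertical foliation, and each center leaf is a circle because top and bottom faces of the cube are identified by pure translation. The projection $\pi:\HG\to\T^2$, $(x,y,z)\mapsto(x,y)$, then realises this as a circle bundle with $\pi\circ\Phi_0=A\circ\pi$.

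With this in place, set $H:=h$. For any vertical leaf $L\subset\HG$ and $\LL:=h^{-1}(L)$, the leaf conjugacy property gives
\[HfH^{-1}(L)=hf(\LL)=\Phi_0(h(\LL))=\Phi_0(L),\]
so $HfH^{-1}$ preserves the vertical foliation and descends to the same map on the leaf space $\T^2$ as $\Phi_0$ does, namely $A$. In the cube coordinates this forces
\[HfH^{-1}(x,y,z)=(A(x,y),\,g(x,y,z))\]
for some function $g$; continuity (as an $\R/\Z$-valued map, once each vertical fibre is parametrised by $\R/\Z$) follows from continuity of $HfH^{-1}$ together with local triviality of the vertical circle bundle. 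Finally, since $h$ restricts to a homeomorphism from each center leaf of $f$ onto a vertical circle, every center leaf of $f$ is a circle, and $H$ carries the center foliation of $f$ to the vertical foliation on $N$, which is the moreover clause.

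Most of the content is absorbed by Theorem \ref{mainthm}; the remaining obstacle is purely bookkeeping, namely checking that the $z$-coordinate on the cube $N$, after accounting for the slanted identification $(0,y,z)\sim(1,y,z+y)$ of the left and right faces, produces $g$ as a genuine continuous $\R/\Z$-valued function, so that the displayed expression defines a legitimate self-map of $N$.
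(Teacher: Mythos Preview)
Your argument is correct and is exactly the intended derivation: the paper presents Corollary \ref{maincor} as a restatement of Theorem \ref{mainthm} without a separate proof, and your unpacking of the leaf conjugacy---showing that $hfh^{-1}$ preserves the vertical foliation and acts as $A$ on the leaf space $\T^2$, hence has the form $(x,y,z)\mapsto(A(x,y),g(x,y,z))$---is precisely the implicit reasoning behind that restatement. The only cosmetic remark is that the paper reserves the letter $H$ for the Franks semi-conjugacy $\HG\to\T^2$ in Section \ref{leafconj}, so in context you would want a different name for the conjugating homeomorphism.
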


Implicit in the statement of Theorem \ref{mainthm} is that any partially
hyperbolic diffeomorphism on a compact, three-dimensional nilmanifold has a
foliation tangent to the center subbundle, as is necessary in the definition of
a leaf conjugacy.  The author proved this condition while preparing this
paper and later discovered that it was independently proved by
K.~Parwani using different techniques \cite{Parwani}.  In the interests of
keeping the paper as self contained as possible, we give a proof of this
result in Section \ref{bbox}.  It arises naturally in the proof of the main
theorem, and does not significantly add to the length of the exposition.

\medskip

The paper is structured as follows.
Section \ref{nilmanifolds} details the algebraic structure of the Heisenberg
group and of functions on that space.  Section \ref{defs} gives a precise
definition of partial hyperbolicity and references known results in the study of
three-dimensional partially hyperbolic systems that will be needed later in the
proof.  Section \ref{bbox} studies the foliations of a partially hyperbolic
systems on the Heisenberg group, culminating in proofs of two key properties
referred to as
\GPS\ (Theorem \ref{GPSthm}) and the \CL\ (Lemma \ref{CLemma}).  Finally,
Section \ref{leafconj} uses these two results to build a leaf conjugacy
between an arbitrary partially hyperbolic system on a nilmanifold, and an
algebraic nilmanifold automorphism.

\section{Nilmanifolds} \label{nilmanifolds} %{{{1
Due to the Bianchi classification, there are exactly two simply connected,
nilpotent Lie groups of dimension three:

\begin{itemize}
    \item the abelian Lie group, $\R^3$, where the group operation is addition,
    and
    \item the Heisenberg group, $\H$, consisting of matrices of the form
    \[\begin{pmatrix}
        1 & x & z \\ & 1 & y \\ & & 1
    \end{pmatrix}\quad(x,y,z\in\R)\]
    under multiplication.  (Here, and in all matrices, blank entries are taken
    to be zero.)
\end{itemize}

The only compact nilmanifold found as a quotient of $\R^3$ is the 3-torus, and
the proof of Theorem \ref{mainthm} for this manifold is given in
\cite{ham-thesis}.
Therefore, from this point on, we take a three-dimensional nilmanifold to mean
a quotient of the form $\HG$ where $\H$ is the Heisenberg group and $\Gamma$
is a discrete, cocompact subgroup, a lattice.  To be specific, $\HG$ is
defined by the equivalence relation
$\gamma\cdot p\sim p$
for $\gamma\in\Gamma$ and $p\in\H$.

The Lie algebra $\h$ associated to the Heisenberg group is generated by
elements
\[
  X = \begin{pmatrix}
        0 & 1 & 0 \\ & 0 & 0 \\ & & 0
      \end{pmatrix},\quad
  Y = \begin{pmatrix}
        0 & 0 & 0 \\ & 0 & 1 \\ & & 0
      \end{pmatrix},\quad\text{and}\quad
  Z = \begin{pmatrix}
        0 & 0 & 1 \\ & 0 & 0 \\ & & 0
      \end{pmatrix}
\]
with $[X,Y]=Z$ and $[X,Z]=[Y,Z]=0$.  The algebra $\h$ may be regarded
as a three-dimensional vector space.  Then, using $X$, $Y$, and $Z$ as a
basis, any Lie algebra endomorphism $\phi:\h\to\h$ may be written as a
$3\times 3$ matrix

\begin{equation}\label{algauto}
  \blockmat
    {A}                {\begin{matrix} 0 \\ 0 \end{matrix}}
    {\begin{matrix} \alpha & \beta \end{matrix}}  {\det(A)}.
\end{equation}
The entries of the last column are determined by the first two columns by the
requirement
$\phi([X,Y]) = [\phi(X),\phi(Y)].$
The Lie algebra endomorphism $\phi$ is invertible if and only if
the associated $3\times 3$ matrix is invertible 
if and only if
the $2\times 2$ submatrix $A$ is invertible.
Let $G$ denote the group of all invertible matrices of the form given in
\eqref{algauto}.

Each Lie algebra automorphism $\phi:\h\to\h$ faithfully corresponds to a Lie
group automorphism $\Phi:\H\to\H.$ Further, if there is a lattice $\Gamma$
such that $\Phi(\Gamma)=\Gamma,$ then $\Phi$
defines a quotient map $\HG\to\HG$ on the nilmanifold.  Conversely, any
endomorphism on the discrete group $\Gamma$ extends uniquely to an
endomorphism on $\H$ \cite{auslander}.

Suppose $f_0:\HG\to\HG$ is a continuous function, and $f:\H\to\H$ is a lift of
$f_0$ to the universal cover.  There is a function $f_*:\Gamma\to\Gamma$ such
that
\[
    f(\gamma\cdot p) = f_*(\gamma)\cdot f(p).
\]
for all $\gamma\in\Gamma$ and $p\in\H$.  $f_*$ is a group endomorphism and
once the lift $f$ is chosen, $f_*$ is unique.  There is a unique Lie group
endomorphism $\Phi:\H\to\H$ such that $\Phi|_\Gamma=f_*$ which we call the
{\em algebraic part} of $f$.

\begin{prop} \label{unifbdd}
    Suppose the Heisenberg group $\H$ is equipped with a metric
    $d(\cdot,\cdot)$ invariant under left-multiplication.  If $f_0:\HG\to\HG$
    is continuous, $f:\H\to\H$ is a lift of $f_0$ to the universal
    cover, and $\Phi:\H\to\H$ is its algebraic part, then the distance between
    $f$ and $\Phi$ is bounded: there is $C>0$ such that
    \[ d\bigl(f(p), \Phi(p)\bigr) < C\]
    for all $p\in\H$.
\end{prop}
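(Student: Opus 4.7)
The plan is to reduce the global bound to compactness of $\HG$, exploiting the fact that $f$ and $\Phi$ are equivariant in exactly the same way under $\Gamma$. By definition of the algebraic part we have $\Phi|_\Gamma = f_*$, so for every $\gamma \in \Gamma$ and $p \in \H$ both
\[ f(\gamma\cdot p) = f_*(\gamma)\cdot f(p) = \Phi(\gamma)\cdot f(p) \quad\text{and}\quad \Phi(\gamma\cdot p) = \Phi(\gamma)\cdot\Phi(p) \]
hold. Using left-invariance of $d$, I would first rewrite
\[ d\bigl(f(\gamma\cdot p),\, \Phi(\gamma\cdot p)\bigr) = d\bigl(\Phi(\gamma)\cdot f(p),\, \Phi(\gamma)\cdot\Phi(p)\bigr) = d\bigl(f(p),\, \Phi(p)\bigr), \]
so that the continuous function $\delta:\H\to\R$ defined by $\delta(p) \eqdef d(f(p), \Phi(p))$ is invariant under left multiplication by $\Gamma$.

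Next I would pick a compact set $K\subset\H$ with $\Gamma \cdot K = \H$; such a $K$ exists because $\HG$ is compact (for instance, the closure of the unit cube fundamental domain described in the introduction). Any $p \in \H$ can be written $p = \gamma \cdot q$ with $q \in K$, so the $\Gamma$-invariance of $\delta$ gives $\delta(p) = \delta(q)$. Since $\delta|_K$ is continuous on a compact set, it is bounded, and any upper bound for it serves as the constant $C$.

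The argument really has no serious obstacle; the only point to be careful about is keeping the directions of equivariance and metric-invariance consistent. Namely, the hypothesis is that $d$ is invariant under \emph{left} multiplication, and the relation $f_* = \Phi|_\Gamma$ encodes $\Gamma$-equivariance with $\Gamma$ acting on $\H$ by \emph{left} multiplication, so that the factor $\Phi(\gamma)$ cancels cleanly. The existence of a left-invariant metric on $\H$ is automatic from any left-invariant Riemannian metric, so the standing hypothesis costs nothing.
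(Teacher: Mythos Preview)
Your proof is correct and follows exactly the same approach as the paper: establish $\Gamma$-invariance of $p\mapsto d(f(p),\Phi(p))$ via left-invariance of the metric, then reduce to a compact fundamental domain. If anything, you have written out more carefully why the $\Gamma$-invariance holds than the paper does.
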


\begin{proof}
    First note
    \[  d\bigl(f(\gamma\cdot p), \Phi(\gamma\cdot p)\bigr) =
        d\bigl(f(p), \Phi(p)\bigr)  \]
    for $p\in\H$ and $\gamma\in\Gamma$.
    Then
    \[ 
        \sup_{p\in\H} d\bigl( f(p), \Phi(p) \bigr) =
        \sup_{p\in K} d\bigl( f(p), \Phi(p) \bigr)
    \]
    where $K$ is a fundamental domain of the covering map
    $\H\to\HG$.  As $K$ can be taken as compact, the supremum is finite.
\end{proof}

If the map $f_0:\HG\to\HG$ is a homeomorphism, then so is a lift $f:\H\to\H$
and the algebraic part $\Phi:\H\to\H$ is invertible.  It is a Lie group
automorphism and the corresponding Lie algebra automorphism is represented by
a matrix $T\in G$.  Call $T$ the matrix associated to $f$.

\begin{prop} \label{unit-det}
    If $f_0:\HG\to\HG$ is a homeomorphism, then the matrix
    associated to a lift $f:\H\to\H$ is of the form
    \[
        \blockmat {A} { \begin{matrix} 0 \\ 0 \end{matrix} }
                  { \begin{matrix} \alpha & \beta \end{matrix} } {\pm1}.
    \]
    That is, it is of the form given in \eqref{algauto} with the additional
    property that $\det(A) = \pm 1$.
\end{prop}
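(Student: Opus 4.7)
The plan is to exploit the fact that any Lie group automorphism of $\H$ must preserve the one-dimensional center $Z(\H) = \{(0,0,z) : z \in \R\}$, and then use that $\Phi$ restricts to a group automorphism of $\Gamma$ to pin down how it acts on $\Gamma \cap Z(\H)$.

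First I would verify that $\Phi$ is an automorphism of $\H$ sending $\Gamma$ \emph{onto} itself. Since $f_0$ is a homeomorphism, its inverse lifts to $f^{-1}:\H\to\H$, and comparing the twisted equivariance relations for $f$ and $f^{-1}$ (for each $\gamma\in\Gamma$, apply $f$ to $f^{-1}(\gamma p) = (f^{-1})_*(\gamma)\cdot f^{-1}(p)$ and use the relation for $f$) shows that $(f^{-1})_*$ is a two-sided inverse for $f_*$. Its unique Lie group extension $\Phi:\H\to\H$ is therefore an automorphism satisfying $\Phi(\Gamma)=\Gamma$. Any Lie group automorphism preserves the center: for $g \in Z(\H)$ and any $h = \Phi(h') \in \H$, one computes $h \Phi(g) = \Phi(h'g) = \Phi(gh') = \Phi(g) h$.

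Next I would identify how $\Phi$ acts on $Z(\H)$. From the form \eqref{algauto}, the Lie algebra automorphism sends $Z\mapsto \det(A)\cdot Z$; equivalently, using the explicit description $\Phi(x,y,z) = (A(x,y),\, cz + p(x,y))$ from the introduction and evaluating at $(0,0,z)$, we obtain $\Phi(0,0,z) = (0,0,\det(A)\cdot z)$. Hence $\Phi$ acts on $Z(\H) \cong \R$ as multiplication by $\det(A)$. Since the lattice has the form $\Gamma = \{(x,y,z) : x,y\in\Z,\ z\in\tfrac{1}{k}\Z\}$ for some positive integer $k$, its intersection with the center is $\Gamma \cap Z(\H) = \{(0,0,m/k) : m\in\Z\} \cong \Z$. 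The restriction of $\Phi$ is then an automorphism of this infinite cyclic group, which must be multiplication by $\pm 1$, and this forces $\det(A) = \pm 1$.

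There is no serious obstacle to this approach; it is essentially algebraic and relies only on the structure of $\H$ and the shape of its cocompact lattices described earlier in the paper. The one place requiring care is showing that $\Phi$ maps $\Gamma$ onto itself rather than merely into itself, which is precisely where the hypothesis that $f_0$ is a homeomorphism (as opposed to a general continuous map) enters decisively.
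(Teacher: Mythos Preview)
Your proposal is correct and follows essentially the same approach as the paper: both argue that the algebraic part $\Phi$ restricts to an automorphism of the center $Z(\H)\cong\R$, acting there as multiplication by $\det(A)$, and that this must preserve the infinite cyclic group $Z(\H)\cap\Gamma$, forcing $\det(A)=\pm1$. The only minor difference is that the paper treats $Z(\H)\cap\Gamma$ abstractly as a discrete non-trivial subgroup of $\R$ (hence of the form $b\Z$), whereas you invoke the explicit normal form $\Gamma=\{(x,y,z):x,y\in\Z,\ z\in\tfrac1k\Z\}$; the paper's phrasing is slightly more self-contained at this stage, but the substance is identical.
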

\begin{proof}
    Let $\Phi:\H\to\H$ be the algebraic part of $f$.  As a Lie group
    automorphism, $\Phi$ acts as an isomorphism on the (group-theoretic)
    center of $\H$.  The center
    \[
        Z(\H) = \{ (0,0,z) : z\in\R \}
    \]
    is isomorphic to the real line under addition.
    Thus, there is a non-zero factor $a\in\R$ such that $\Phi$ acts on
    $Z(\H)$ as multiplication by $a$.
    The set $Z(\Gamma)=Z(\H)\cap\Gamma$ is a discrete, non-trivial subgroup and
    therefore of the form
    \[
        Z(\Gamma) = \{ (0,0,bz) : z\in\Z \}
    \]
    for some non-zero constant $b$.  Here, $\Phi|_{Z(\Gamma)}$ acts as
    an isomorphism.  This implies that $a=\pm1$
    and $a$ is exactly the entry $\det(A)$ in equation \eqref{algauto}.
\end{proof}

Suppose $\Phi:\H\to\H$ is a Lie group automorphism and $\Gamma\subset\H$ is a
lattice.  Define $\Gamma'$ as the image $\Phi(\Gamma)$.  Then $\Phi$ quotients
down to a homeomorphism $\Phi_0$ between the compact nilmanifolds $\HG$ and
$\HG'$.  Call such a map $\Phi_0$ a {\em nilmanifold isomorphism}.  Call two
homeomorphisms $f_0:\HG\to\HG$ and $g_0:\HG'\to\HG'$ {\em algebraically
conjugate} if there is a nilmanifold isomorphism $\Phi_0:\HG\to\HG'$ such that
\[  \Phi_0 f_0 = g_0 \Phi_0.  \]

\begin{lemma}
    Suppose $f_0:\HG\to\HG$ is a homeomorphism with a lift associated to a
    matrix $S\in G$.
    If $S$ is conjugate to $T\in G$ (that is, there is $P\in G$ such that
    $T=PSP^{-1}$), then $f_0$ is algebraically conjugate to a homeomorphism
    $g_0:\HG'\to\HG'$ with a lift associated to the matrix $T$.
\end{lemma}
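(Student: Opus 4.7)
The plan is to build an explicit algebraic conjugacy out of the matrix $P$, and then verify that conjugating the lift of $f_0$ by the corresponding Lie group automorphism produces a map whose algebraic part has matrix $T$. Since $P\in G$, it represents a Lie algebra automorphism of $\h$, which by the correspondence discussed above integrates to a unique Lie group automorphism $\Psi:\H\to\H$ with derivative $P$ at the identity. Set $\Gamma'=\Psi(\Gamma)$; this is again a discrete cocompact subgroup because $\Psi$ is a diffeomorphism of $\H$, so $\HG'$ is a compact nilmanifold and $\Psi$ descends to a nilmanifold isomorphism $\Psi_0:\HG\to\HG'$.

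Next I would define
\[
    g_0 \;=\; \Psi_0\circ f_0\circ\Psi_0^{-1}:\HG'\to\HG',
\]
which immediately gives $\Psi_0 f_0=g_0\Psi_0$, so $f_0$ and $g_0$ are algebraically conjugate. Let $f:\H\to\H$ be the lift of $f_0$ whose algebraic part $\Phi$ is associated to the matrix $S$, and put $g=\Psi\circ f\circ\Psi^{-1}$. Then $g$ is a lift of $g_0$, and I would compute its action on $\Gamma'$: for any $\gamma'=\Psi(\gamma)\in\Gamma'$ and $p\in\H$,
\[
    g(\gamma'\cdot p)=\Psi\bigl(f_*(\gamma)\cdot f\Psi^{-1}(p)\bigr)
        =\bigl(\Psi f_*\Psi^{-1}\bigr)(\gamma')\cdot g(p),
\]
so $g_*=\Psi f_*\Psi^{-1}|_{\Gamma'}$. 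This extends uniquely to the Lie group automorphism $\Psi\Phi\Psi^{-1}$ of $\H$, which is by definition the algebraic part of $g$.

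Finally, since the derivative at the identity is functorial under composition and inversion of Lie group automorphisms, the matrix of the algebraic part of $g$ is $P\,S\,P^{-1}=T$, as required. The main obstacle is not a deep one: it is checking that the naive conjugation really does descend cleanly, i.e.\ that $\Gamma'$ is again a lattice so $\HG'$ is a genuine compact nilmanifold, that $\Psi_0$ and $\Psi_0^{-1}$ are well defined at the quotient level, and that the matrix associated to a composite Lie group automorphism is indeed the product of the matrices — all of which follow from the functorial correspondence between elements of $G$ and Lie group automorphisms of $\H$ described after equation \eqref{algauto}.
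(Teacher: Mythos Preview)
Your proof is correct and follows essentially the same route as the paper: integrate $P$ to a Lie group automorphism $\Psi$, set $\Gamma'=\Psi(\Gamma)$, define $g_0=\Psi_0 f_0\Psi_0^{-1}$ and $g=\Psi f\Psi^{-1}$, and observe that the algebraic part of $g$ is $\Psi\Phi\Psi^{-1}$ with associated matrix $PSP^{-1}=T$. You give a bit more detail (the explicit computation of $g_*$ and the checklist of sanity conditions), but the argument is the same.
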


\begin{proof}
    Say $P\in G$ is such that $PSP^{-1}=T$.  Then, $P$ induces a Lie group
    automorphism $\Psi:\H\to\H$.  Set $\Gamma'=\Psi(\Gamma)$.  It follows
    that $\Psi$ descends to a nilmanifold isomorphism $\Psi_0:\HG\to\HG'$.
    Define $g_0$ as the composition $\Psi_0 f_0\Psi_0^{-1}$.
    
    Let $f:\H\to\H$ be the lift of $f_0$ associated to $S$, and let
    $\Phi:\H\to\H$ be its algebraic part.  Define $g:\H\to\H$ by
    $g = \Psi f\Psi^{-1}$.  It is a lift of $g_0$, its algebraic part is 
    $\Psi\Phi\Psi^{-1}$, and its associated matrix is $PSP^{-1}=T$.
\end{proof}

Call a matrix $T\in G$ {\em partially hyperbolic} if it has eigenvalues
$\lambda_1,\lambda_2,\lambda_3$ where $|\lambda_1|<1$, $|\lambda_2|>1$ and
$|\lambda_3|=1$.  All eigenvalues are by necessity real, and because the
structure of the matrix is of the form given in \eqref{algauto}, it must be
that the submatrix $A$ is hyperbolic and $\lambda_1\lambda_2=\lambda_3$.

\begin{prop} \label{nicemat}
    Suppose $f_0:\HG\to\HG$ is a homeomorphism with a lift associated to a
    partially hyperbolic matrix $T\in G$.  Then $f_0$ is
    algebraically conjugate to a homeomorphism on a nilmanifold $\HG'$ with a
    lift associated to a diagonal matrix
    \[\begin{pmatrix}
        \lambda_1 & & \\ & \lambda_2 & \\ & & \lambda_3
    \end{pmatrix}\]
    with entries given by the eigenvalues of $T$.
\end{prop}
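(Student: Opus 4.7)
The plan is to show that inside the group $G$, the matrix $T$ is conjugate to the desired diagonal matrix, and then invoke the previous lemma to conclude that the corresponding homeomorphisms are algebraically conjugate. By the block structure in \eqref{algauto}, any conjugation must be carried out by elements of $G$, so the strategy is naturally broken into two stages: first use the freedom in the $2\times 2$ block to diagonalize $A$, then use the freedom in the bottom row to kill the remaining entries $\alpha$ and $\beta$.

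For the first stage, observe that $A$ is a hyperbolic $2\times 2$ matrix with real eigenvalues $\lambda_1 \neq \lambda_2$, so there exists an invertible $2\times 2$ matrix $Q$ with $QAQ^{-1} = \operatorname{diag}(\lambda_1,\lambda_2)$. I would take
\[
    P_1 = \blockmat{Q}{\begin{matrix}0\\0\end{matrix}}{\begin{matrix}0 & 0\end{matrix}}{\det(Q)} \in G
\]
and compute that $P_1 T P_1^{-1}$ is of the form \eqref{algauto} with top-left block equal to $\operatorname{diag}(\lambda_1,\lambda_2)$, bottom-right entry $\det(A) = \lambda_3$, and some new bottom-row entries $\alpha',\beta'$.

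For the second stage, I would conjugate by
\[
    P_2 = \blockmat{I}{\begin{matrix}0\\0\end{matrix}}{\begin{matrix}\gamma & \delta\end{matrix}}{1} \in G,
\]
which also lies in $G$ since $\det(I)=1$. A direct computation shows that the bottom row of $P_2(P_1TP_1^{-1})P_2^{-1}$ becomes
\[
    \bigl(\alpha' + (\lambda_1-\lambda_3)\gamma,\ \beta' + (\lambda_2-\lambda_3)\delta,\ \lambda_3\bigr),
\]
while the top-left block is unchanged. Since $\lambda_3 = \lambda_1\lambda_2$, and $|\lambda_1|<1<|\lambda_2|$ forces $\lambda_1\neq\lambda_3$ and $\lambda_2\neq\lambda_3$ (otherwise $\lambda_2=1$ or $\lambda_1=1$), both coefficients $\lambda_i-\lambda_3$ are nonzero, so I can solve uniquely for $\gamma,\delta$ to zero out the bottom row. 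Setting $P = P_2 P_1 \in G$, the product $PTP^{-1}$ is precisely the diagonal matrix with entries $\lambda_1,\lambda_2,\lambda_3$. Applying the previous lemma with this $P$ delivers the algebraic conjugacy and completes the proof.

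The only real obstacle is checking the non-degeneracy conditions $\lambda_i\neq\lambda_3$ that make the second stage solvable; these follow from the partial hyperbolicity assumption as indicated above. Everything else is routine linear algebra, and no analytic input beyond the eigenvalue inequalities is needed.
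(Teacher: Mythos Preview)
Your proof is correct and follows essentially the same two-step strategy as the paper: conjugate within $G$ by a block-diagonal matrix to diagonalize the $2\times 2$ block $A$, and by a lower-triangular matrix to eliminate the bottom-row entries, invoking the preceding lemma to pass from conjugacy in $G$ to algebraic conjugacy of homeomorphisms. The only difference is cosmetic: the paper performs the two conjugations in the opposite order (first killing the bottom row using that $A-\lambda_3 I$ is invertible, then diagonalizing $A$ via some $P\in SL(2,\R)$), but the underlying non-degeneracy condition---that $\lambda_3$ is not an eigenvalue of $A$---is identical to your $\lambda_i\neq\lambda_3$, and your placement of $\det(Q)$ in the corner of $P_1$ neatly sidesteps the need to normalize $Q$ to have determinant one.
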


\begin{proof}
    In light of the previous lemma, we need only show that a partially
    hyperbolic matrix in $G$ is conjugate (in $G$) to a diagonal one.  For
    simplicity, assume that $1$ is an eigenvalue.  The other case, with $-1$
    as an eigenvalue, is handled similarly.

    If
    \[
        T = \blockmat {A}{0}{u}{det(A)} \in G
    \]
    is partially hyperbolic with eigenvalues $\lambda_1, \lambda_2,$ and
    $\lambda_3=1$, it must be that $\det(A)=1$ and that $A$ is hyperbolic.
    Here, $u$ is a $1\times 2$ block.  Let $v$ be another block of the same
    dimensions, and let $I$ denote the $2\times 2$ identity matrix.  Then
    \[
        \blockmat {I}{0}{v}{1} \in G
    \]
    and
    \[
        \blockmat {I}{0}{v}{1}
        \blockmat {A}{0}{u}{1}
        { \blockmat {I}{0}{v}{1} }^{-1} =
        \blockmat {A}{0}{vA+u-v}{1}.
    \]

    As $A$ is hyperbolic, $A-I$ is invertible and there is a unique value of
    $v$ such that $vA+u-v=0$.  Therefore, $T$ is conjugate in $G$ to
    \[ \blockmat {A}{0}{0}{1}. \]
    $A$ is diagonalizable; there is $P\in SL(2,\R)$ such that
    $PAP^{-1}=\begin{pmatrix} \lambda_1 & 0 \\ 0 & \lambda_2
    \end{pmatrix}$.  Then
    \[ \blockmat {P}{0}{0}{1} \in G \]
    and
    \[
        \blockmat {P}{0}{0}{1}
        \blockmat {A}{0}{0}{1}
        { \blockmat {P}{0}{0}{1} } ^ {-1} =
        \blockmat { \begin{matrix}
                        \lambda_1 & 0 \\ 0 & \lambda_2 
                    \end{matrix} }
                  { \begin{matrix} 0 \\ 0 \end{matrix} }
                  { \begin{matrix} 0 & 0 \end{matrix} }
                  {1}
    \]
    as desired.
\end{proof}

The Lie group automorphism associated to the matrix
\[  \begin{pmatrix}
        \lambda_1 & {} & {} \\ {} & \lambda_2 & {} \\ {} & {} & 1
    \end{pmatrix} \] 
is particularly tractable.  It is of the form
\[  \begin{pmatrix}
        1 & x & z \\ & 1 & y \\ & & 1
    \end{pmatrix} \mapsto
    \begin{pmatrix}
        1 & \lambda_1 x & z \\ & 1 & \lambda_2 y \\ & & 1
    \end{pmatrix} \]
with only linear terms.

\section{Definitions and Dependencies} \label{defs} % {{{1
A $C^1$ diffeomorphism $f:M\to M$ on a Riemannian manifold $M$ is {\em
partially hyperbolic} if there is a splitting of $TM$ into three continuous,
non-trivial, $Tf$-invariant sub-bundles
$TM=\Eu\oplus\Ec\oplus\Es$ and constants
$0<\lambda<\gammahat<1<\gamma<\mu$ and $\Cph>1$ such that for $x\in M$
and $n\in\Z$ 
\begin{align*}
  \frac{1}{\Cph}\mu^n\|v\|      \le&\|Tf^nv\|
      &\textrm{for}\ v&\in\Eu(x), \\
  \frac{1}{\Cph}\gammahat^n\|v\|\le&\|Tf^nv\|\le\Cph\gamma^n\|v\|
      &\textrm{for}\ v&\in\Ec(x), \\
  \vphantom{\frac{1}{\Cph}} % for uniform spacing
                                 &\|Tf^nv\|\le\Cph\lambda^n\|v\|
      &\textrm{for}\ v&\in\Es(x).
\end{align*}

\begin{remark}
    The above definition is sometimes called {\em absolute} partial
    hyperbolicity in comparison to ``relative'' or ``pointwise'' partial
    hyperbolicity where the values $\lambda<\gammahat<1<\gamma<\mu$ are
    functions $M\to\R$.
\end{remark}

On a compact manifold, any partially hyperbolic system under one metric will
still be partially hyperbolic under a different choice of metric with at most
a change in the constant $\Cph$.  For nilmanifolds in particular, we work with
one specific metric which we now define.

Recall that an element
\[  \begin{pmatrix}
        1 & x & z \\ & 1 & y \\ & & 1
    \end{pmatrix} \in \H \]
is denoted by the shorthand $(x,y,z)$.  The Lie algebra elements $X$,
$Y$, and $Z$ may be thought of as vector fields on $\H$ invariant under
left-multiplication by elements of the group.  These vector fields are
\[
    X = \deldel{x}, \quad
    Y = \deldel{x}+x\deldel{z},\quad\text{and}\quad
    Z = \deldel{z}.
\]
(The asymmetry between $X$ and $Y$ is due to the one-sidedness of
left-invariance.)
Fix a Riemannian metric on $\H$ such that at each point, the vectors from $X$,
$Y$, and $Z$ form an orthonormal basis of the tangent space.  Call this the
{\em left-invariant metric} on $\H$ and note that it descends to every
nilmanifold $\HG$.  It is with respect to this metric that we consider partial
hyperbolicity.  For points $(x_1,y_1,z_1)$ and $(x_2,y_2,z_2)$ in $\H$, we may
also consider the more familiar the Euclidean metric
\[\|(x_1,y_1,z_1) - (x_2,y_2,z_2)\| =
    \sqrt{(x_1-x_2)^2 + (y_1-y_2)^2 + (z_1-z_2)^2}. \]
There is no meaningful way to project this metric down to a compact nilmanifold
$\HG$, but it will be helpful when studying systems lifted to the Heisenberg
group.  To avoid confusion, we always use $d(p,q)$ for distance in the
left-invariant metric and $\|p-q\|$ for the Euclidean.  Note that while the
two metrics are different, they yield the exact same volume form on $\H$.

\medskip

In a partially hyperbolic system, the unstable $\Eu$ and stable $\Es$
distributions are always uniquely integrable; that is, there are unique
foliations $\Wu$ and $\Ws$ tangent to $\Eu$ and $\Es$ respectively.  The
center direction $\Ec$ is sometimes integrable and sometimes not.  The system
is called {\em dynamically coherent} if $\Ec$, $\Ecu=\Ec\oplus\Eu$ and
$\Ecs=\Ec\oplus\Es$ are uniquely integrable.

In \cite{BBI2}, M.~Brin, D.~Burago, and S.~Ivanov show the remarkable result
that all (absolutely) partially hyperbolic systems on the 3-torus are
dynamically coherent.  A key step in their analysis is showing the absence of
so-called ``transverse contractible cycles'' for three-dimensional systems.

While this paper does not consider transverse contractible cycles directly,
two consequences of their study will be invaluable.  For the following,
suppose $M$ is a closed Riemannian 3-manifold with universal cover $\tilde M$
and $f_0:M\to M$ is a partially hyperbolic diffeomorphism with lift
$f:\tilde M\to\tilde M$.  This lift is also partially hyperbolic, so we may
consider its stable and unstable foliations.

\begin{prop}[Lemma 3.1 of \cite{BBI2}]\label{int-unique}
    If $f_0$ (and therefore $f$) is dynamically coherent, then on the
    universal cover, a leaf of $\Wcs$ and a leaf of $\Wu$ intersect at most
    once.
\end{prop}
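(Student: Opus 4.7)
The plan is a proof by contradiction, combining dynamical contraction of $\Wu$-arcs with the local product structure of the transverse foliations $\Wu$ and $\Wcs$.

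Suppose $L^{cs}$ and $L^u$ meet at two distinct points $p, q \in \tilde M$, and let $\alpha \subset L^u$ be the unique arc from $p$ to $q$, of some length $\ell > 0$. First I would note that $f^{-n}$ uniformly contracts unstable arcs, giving
\[ \length(f^{-n}(\alpha)) \;\le\; \Cph\mu^{-n}\ell \;\longrightarrow\; 0, \]
so $d(f^{-n}(p), f^{-n}(q)) \to 0$, while $f^{-n}(p)$ and $f^{-n}(q)$ remain on the common leaves $f^{-n}(L^u)$ and $f^{-n}(L^{cs})$.

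Next I would invoke the local product structure for $\Wu$ and $\Wcs$, a standard consequence of partial hyperbolicity together with dynamic coherence. There is $\epsilon > 0$ such that for every $x \in \tilde M$, the local plaques through $x$ (the connected components through $x$ of $\Wu(x) \cap B_\epsilon(x)$ and of $\Wcs(x) \cap B_\epsilon(x)$) meet only at $x$. Choose $n$ large enough that $\length(f^{-n}(\alpha)) < \epsilon$; the arc $f^{-n}(\alpha)$ then lies in $B_\epsilon(f^{-n}(p))$, so its endpoint $f^{-n}(q)$ belongs to the local $\Wu$-plaque at $f^{-n}(p)$.

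The main obstacle is the final step: showing that $f^{-n}(q)$ also lies in the local $\Wcs$-plaque at $f^{-n}(p)$, rather than on a different sheet of the global leaf $f^{-n}(L^{cs})$ that has re-entered $B_\epsilon(f^{-n}(p))$. Ruling out this global wrap-around is not soft and is the true geometric content of Lemma~3.1 of \cite{BBI2}: one uses the $3$-dimensional topology of $\tilde M$, its simple-connectivity, and the transversality of $\Wu$ and $\Wcs$ to derive a contradiction from the transverse bigon bounded by $f^{-n}(\alpha)$ and a plaque path in $\Wcs$, tying back to the absence-of-transverse-contractible-cycles machinery of \cite{BBI2}. Once the wrap-around is excluded, $f^{-n}(q)$ lies in the intersection of the two local plaques, which reduces to $\{f^{-n}(p)\}$; this forces $p = q$, contrary to assumption.
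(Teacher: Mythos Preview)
The paper does not supply a proof of this proposition; it is quoted as Lemma~3.1 of \cite{BBI2} and used as a black box, with only the remark that in \cite{BBI2} the result ``does not \emph{a priori} assume the system is dynamically coherent, and is stated in a slightly different form.'' So there is no in-paper proof against which to compare your attempt.

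Your outline is candid about its own gap, and that gap is the entire content of the lemma. The backward-iteration step is correct and does reduce the global statement (any two intersection points) to a local one (two intersection points joined by an arbitrarily short $\Wu$-arc). But the local statement you arrive at---that a short unstable arc cannot have both endpoints on the same global $\Wcs$-leaf unless the endpoints coincide---is precisely where the work lies, and as you yourself note, resolving it requires the transverse-contractible-cycle / bigon argument of \cite{BBI2}. Local product structure alone only tells you that the local $u$-plaque and the local $cs$-plaque meet in a single point; it says nothing about a second sheet of the same global $cs$-leaf re-entering the ball. Thus your reduction reformulates Lemma~3.1 rather than proving it: the step you defer back to \cite{BBI2} is the lemma itself. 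If you want a self-contained argument you must actually carry out the three-dimensional, simply-connected-cover bigon analysis; otherwise, citing \cite{BBI2} directly, as the paper does, is the honest move.
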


In fact, the result in \cite{BBI2} does not {\em a priori} assume
the system is dynamically coherent, and is stated in a slightly different
form.
The uniqueness of intersection immediately gives a topological property of
the leaves.

\begin{cor} \label{propembed}
    If $f$ is dynamically coherent, each center-stable leaf is properly
    embedded on the universal cover.
\end{cor}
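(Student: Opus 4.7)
The plan is to deduce proper embedding of a center-stable leaf $L$ from a purely local fact: in any sufficiently small foliation chart $U$, $L$ appears as at most one plaque. Once this is in hand, proper embedding follows immediately, since local plaques are closed in their chart, so a limit in $\tilde M$ of a sequence in $L$ remains in $L$.

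First I would fix $p\in\tilde M$ and choose $U$ small enough to admit local product structure between $\Wcs$ and $\Wu$, writing $U$ as a disjoint union of local center-stable plaques parameterized by a short arc $T\subset\Wu(p)$ transverse to $\Wcs$. Any connected component $P$ of $L\cap U$ is tangent to $\Ecs$ and therefore sits inside a single local $\Wcs$-plaque through some $t_P\in T$. Because this plaque lies in the global center-stable leaf through $t_P$ while also meeting $L$, uniqueness of center-stable leaves forces that global leaf to be $L$ itself, so $t_P\in L$. The set of such parameters is thus a subset of $L\cap T\subset L\cap\Wu(p)$, which by Proposition \ref{int-unique} contains at most one point; hence $L\cap U$ has at most one component.

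To finish, I would argue that this single-plaque property implies $L$ is closed in $\tilde M$ with its intrinsic leaf topology equal to the subspace topology: any sequence in $L$ converging to $q\in\tilde M$ eventually sits in the unique plaque of $L$ meeting a foliation chart about $q$, and since local plaques are closed in their chart, $q$ lies in this plaque and hence in $L$. I do not anticipate a significant obstacle; the only step that requires any care is the identification $t_P\in L$, which hinges on the fact that two center-stable leaves sharing a point must coincide, and once that is observed the conclusion is essentially an application of Proposition \ref{int-unique} inside each local foliation chart.
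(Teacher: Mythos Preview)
Your argument is correct and is essentially the same as the paper's: both use Proposition~\ref{int-unique} together with transversality of $\Wu$ to $\Wcs$ to rule out multiple plaques of a center-stable leaf in a foliation chart. The paper phrases this as a one-line proof by contradiction (accumulation at $p$ would force $\Wu(p)$ to meet $\LL$ infinitely often), whereas you unfold the same idea into a direct single-plaque argument; the underlying mechanism is identical.
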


\begin{proof}
    If a center-stable leaf $\LL$ accumulated on a point $p\in\H$, then, as the
    foliations are transverse, $\Wu(p)$ would intersect $\LL$ an infinite
    number of times.
\end{proof}

By the same logic, the unstable leaves are properly embedded as well, but we
can say something further.
For a finite-length subcurve $J$ of an unstable leaf, let $U_1(J)$ denote the
set of all points $p\in\tilde M$ such that $\dist(p, J) < 1$.

\begin{prop}[Lemma 3.3 of \cite{BBI2}]\label{uvol}
    There is a constant $C$ such that, for every unstable curve $J$ on the
    universal cover, one has
    \[\vol\ U_1(J)\,\ge\,C\cdot\length(J).\]
\end{prop}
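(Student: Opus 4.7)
The plan is a volume-counting argument covering $J$ by uniformly sized ``product boxes'' distributed along its length. First, using compactness of $M$ and continuity of the splitting $TM=\Eu\oplus\Ec\oplus\Es$, I would extract uniform constants $\delta,\eta,v_0>0$ such that for every $p\in M$ there is an embedded two-dimensional disk $D(p)\ni p$ of diameter $\eta$ transverse to $\Eu$, and a box $V(p)$, defined as the union of the arclength-$\delta$ plaques of $\Wu$ through all points of $D(p)$, satisfying: $V(p)$ is embedded, $V(p)\subset B(p,1/2)$, and $\vol V(p)\ge v_0$. These uniform constants lift unchanged to the universal cover $\tilde M$.

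Next, choose points $p_1,\dots,p_n\in J$ at arclength spacing $\delta$ along $J$, so that $n\ge\length(J)/\delta$. Each box $V(p_i)$ lies in $B(p_i,1/2)\subset U_1(J)$, and hence $\bigcup_{i} V(p_i)\subset U_1(J)$.

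The main obstacle is establishing a uniform multiplicity bound: a constant $N$, independent of $J$, such that every $x\in\tilde M$ lies in at most $N$ of the boxes $V(p_i)$. If $x\in V(p_i)$, then $x$ sits on an unstable plaque through some $q_i\in D(p_i)$ with $d(p_i,q_i)\le\eta$ and $d(q_i,x)\le\delta$; since the unstable foliation has pairwise disjoint leaves, every such $q_i$ must lie on the single leaf $\Wu(x)$, and consequently every such $p_i$ lies in the ball $B(x,\delta+\eta)$. Bounding the number of arclength-$\delta$-spaced points of $J$ that can fit in this ball reduces to controlling the length-to-diameter ratio for subarcs of a single unstable leaf confined to a bounded-diameter ball in $\tilde M$. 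Establishing this uniformly is the crux of the argument; the inputs are the proper embeddedness of unstable leaves on $\tilde M$ (which follows from the local product structure between $\Wu$ and the transverse stable direction) together with compactness of $M$ to promote pointwise estimates into a uniform one.

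Once the multiplicity is bounded by $N$, summing the box volumes with multiplicity yields
\[
\vol U_1(J)\;\ge\;\tfrac{1}{N}\sum_{i=1}^{n}\vol V(p_i)\;\ge\;\tfrac{nv_0}{N}\;\ge\;\tfrac{v_0}{N\delta}\,\length(J),
\]
so the constant $C=v_0/(N\delta)$ works.
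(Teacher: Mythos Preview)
The paper does not give its own proof of this proposition; it is quoted verbatim as Lemma~3.3 of \cite{BBI2} and used as an external input. So there is no in-paper argument to compare against.

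As to the proposal itself: the covering-by-boxes strategy is the right shape, but there is a genuine gap exactly where you flag ``the crux.'' Your multiplicity bound reduces to the statement that an unstable arc contained in a ball of fixed radius on $\tilde M$ has uniformly bounded arclength. That statement is \emph{equivalent} to the proposition you are trying to prove: one direction is your argument, and the other follows because $J\subset B(x,R)$ forces $U_1(J)\subset B(x,R+1)$, whence $C\cdot\length(J)\le \vol B(x,R+1)$. So the reduction is circular unless you supply an independent proof of the arclength bound. The inputs you name do not suffice: local product structure gives only local proper embeddedness of unstable leaves, not global; and even granting global proper embeddedness on $\tilde M$, a compactness argument to upgrade ``finite for each leaf'' to ``uniformly finite'' needs exactly the kind of control you are trying to establish. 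In the logical order of this paper (and of \cite{BBI2}), proper embeddedness of the relevant leaves is a \emph{consequence} of the results at this level, not an input to them.

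The actual argument in \cite{BBI2} sidesteps the multiplicity issue entirely by proving \emph{disjointness} of the transverse disks directly. The mechanism is the absence of ``transverse contractible cycles'' (mentioned in the paper just before the statement): on $\tilde M$ there is no closed loop made of an unstable arc closed up by a short arc transverse to $E^u$. This is a dynamical fact using the domination of $E^{cs}$ by $E^u$ in dimension three. Once one knows that small transverse disks placed at well-spaced points along $J$ are pairwise disjoint, the multiplicity is $1$ and the volume estimate is immediate. Your boxes $V(p_i)$ are the right objects; what is missing is this disjointness argument in place of the circular multiplicity estimate.
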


Using $\mu$ as given in the definition of partial hyperbolicity, this result
can be generalized to apply to iterates of the curve.

\begin{cor}\label{fuvol}
    There is a constant $C$ such that, for every unstable curve $J$ on the
    universal cover and every $n\ge 0$, one has
    \[\vol\ U_1(f^n(J))\,\ge\,C\mu^n\cdot\length(J).\]
\end{cor}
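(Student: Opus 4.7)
The plan is to apply Proposition \ref{uvol} directly to the iterated curve $f^n(J)$, using only the expansion estimate from the definition of partial hyperbolicity to control the length growth.

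First I would observe that since the unstable foliation $\Wu$ is $f$-invariant, the image $f^n(J)$ is again an unstable curve (a subcurve of a leaf of $\Wu$), so Proposition \ref{uvol} applies to it with the same constant. Second, I would use the expansion estimate in the definition of partial hyperbolicity: for any $v\in\Eu$ one has $\|Tf^n v\|\ge\frac{1}{\Cph}\mu^n\|v\|$. Integrating along $J$, whose tangent vectors lie in $\Eu$, yields
\[ \length(f^n(J)) \ge \frac{1}{\Cph}\mu^n\cdot\length(J). \]

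Finally, I would combine these two facts. Letting $C'$ denote the constant supplied by Proposition \ref{uvol}, we get
\[ \vol\ U_1(f^n(J)) \,\ge\, C'\cdot\length(f^n(J)) \,\ge\, \frac{C'}{\Cph}\,\mu^n\cdot\length(J), \]
and the corollary follows by setting $C = C'/\Cph$.

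There is essentially no substantive obstacle here; the only point worth a line of commentary is the invariance of $\Wu$ under $f$ (so that Proposition \ref{uvol} actually applies to $f^n(J)$) and the fact that the expansion constant $\mu$ and the constant $\Cph$ appearing in the partial hyperbolicity definition are globally uniform on the universal cover because $f_0$ lives on the compact quotient.
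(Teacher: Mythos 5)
Your argument is correct and is exactly the one the paper has in mind (the paper leaves the proof implicit, saying only that the result follows ``using $\mu$ as given in the definition of partial hyperbolicity''): apply Proposition \ref{uvol} to the unstable curve $f^n(J)$ and use the uniform lower bound $\length(f^n(J))\ge\Cph^{-1}\mu^n\length(J)$ from the expansion estimate. Nothing further is needed.
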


In the case of the Heisenberg group, the neighbourhood $U_1$ can be defined
where distance is measured with the left-invariant metric, but not with the
Euclidean metric, since only the former quotients down to the nilmanifold.

Additionally, we need a result proven by 
F.~Rodriguez Hertz, M.~A.~Rodriguez Hertz, and R.~Ures
when establishing ergodicity for measure-preserving partially hyperbolic
systems on three-dimensional nilmanifolds.

\begin{prop}[Proposition 7.2 of \cite{RHRHU-nil}]\label{phmat}
    If $f_0:\HG\to\HG$ is partially hyperbolic with a lift $f:\H\to\H$, then
    the matrix associated to $f$ is partially hyperbolic.
\end{prop}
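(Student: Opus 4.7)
The plan is to argue by contradiction using a volume growth estimate: supposing $T$ is not partially hyperbolic, I will show that the iterates $f^n$ keep bounded sets inside a ball of polynomial radius, contradicting the exponential volume growth of unstable curves provided by Corollary \ref{fuvol}. First, I reduce to the case where all eigenvalues of $T$ have modulus $1$. By Proposition \ref{unit-det} the third eigenvalue of $T$ equals $\det(A)=\pm 1$, and the two eigenvalues of $A$ multiply to $\pm 1$. Hence $T$ is partially hyperbolic iff $A$ is hyperbolic; if $A$ is not hyperbolic, its eigenvalues are either a complex-conjugate pair on the unit circle or are real of modulus $1$, and in every case each eigenvalue of $T$ has modulus $1$.

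The next step extracts polynomial growth from this spectral hypothesis. By Jordan form $\|T^n\|\le C(1+n)^k$ for some $k$. Since $\H$ is nilpotent, the Lie group automorphism $\Phi$ satisfies $\Phi=\exp\circ T\circ\exp^{-1}$, and both $\exp$ and its inverse are polynomial on $\H$. Combined with the homogeneity of the left-invariant metric, this produces, for each $R>0$, a polynomial $P_R$ with $\Phi^n(B_R(p))\subset B_{P_R(n)}(\Phi^n(p))$ for every $p\in\H$ and $n\ge 0$. I then compare $f^n$ to $\Phi^n$ by iterating Proposition \ref{unifbdd}: setting $e_n=d(f^n(p),\Phi^n(p))$,
\[e_{n+1}\le d(f(f^n(p)),\Phi(f^n(p))) + d(\Phi(f^n(p)),\Phi(\Phi^n(p))) \le C + L_n\cdot e_n,\]
where $L_n$ is the Lipschitz constant of $\Phi$ on a small neighborhood of $\Phi^n(p)$. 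By the previous bound this neighborhood lies in a ball of polynomial radius, and since $\Phi$ is itself a polynomial map, $L_n$ is polynomial in $n$. Induction yields $e_n\le Q(n)$ for some polynomial $Q$, so $f^n(B_R(p))$ lies in a ball whose radius is polynomial in $n$.

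To close the argument, pick an unstable arc $J_0\subset\HG$ of length $1$ and lift to an arc $J\subset B_R(p_0)\subset\H$ with $R$ independent of $J_0$. Then $U_1(f^n(J))$ lies in a ball of polynomial radius. The Heisenberg group, being nilpotent, has polynomial volume growth under its left-invariant metric (and is homogeneous under left translation), so $\vol U_1(f^n(J))$ is polynomially bounded in $n$. This contradicts the exponential lower bound $\vol U_1(f^n(J))\ge C\mu^n\length(J)$ from Corollary \ref{fuvol}.

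The main obstacle is the iterative comparison above: Proposition \ref{unifbdd} only supplies the one-step bound $d(f,\Phi)\le C$, and naively iterating multiplies the error by the Lipschitz constant of $\Phi$ at each step. One must verify that along the $\Phi$-orbit of a bounded point these Lipschitz constants grow only polynomially in $n$ — this is what the unit-modulus spectrum of $T$, together with the polynomial nature of $\Phi$ on $\H$, provide. Once the polynomial control on $e_n$ is in place, the volume-growth contradiction is immediate.
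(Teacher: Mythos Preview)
Your overall strategy is exactly the paper's: reduce to the case where every eigenvalue of $T$ has modulus $1$, show that $\diam f^n(J)$ grows at most polynomially in $n$, invoke the polynomial volume growth of $\H$ to bound $\vol U_1(f^n(J))$ polynomially, and contradict Corollary~\ref{fuvol}. The paper simply quotes the polynomial diameter bound as a fact from \cite{RHRHU-nil}; you try to supply a proof, which is where the issue lies.

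The recursion $e_{n+1}\le C+L_n\,e_n$ does not give what you claim. Because $\Phi$ is a Lie group automorphism and the metric is left-invariant, $\|D\Phi_q\|=\|D\Phi_e\|=\|T\|$ at \emph{every} point $q\in\H$; the Lipschitz constant of a single application of $\Phi$ is the constant $\|T\|$, independent of where on the $\Phi$-orbit you sit. So your $L_n$ does not vary with $n$. If $T$ has a nontrivial Jordan block (say $A$ unipotent but not the identity, which is entirely consistent with ``all eigenvalues of modulus $1$''), then $\|T\|>1$ and iterating $e_{n+1}\le C+\|T\|e_n$ produces an exponential bound, not a polynomial one. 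The fact you actually need---that $\|T^n\|$ grows polynomially---is lost when you telescope one step of $\Phi$ at a time.

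The repair is to telescope against powers of $\Phi$ directly. Setting $q_j=\Phi^{\,n-j}(f^j(p))$ for $0\le j\le n$, so that $q_0=\Phi^n(p)$ and $q_n=f^n(p)$, and using that $\Phi^{n-j}$ has global Lipschitz constant $\|T^{n-j}\|$, one gets
\[
e_n\;\le\;\sum_{j=1}^{n} d\bigl(\Phi^{\,n-j}(f(f^{j-1}(p))),\,\Phi^{\,n-j}(\Phi(f^{j-1}(p)))\bigr)
\;\le\; C\sum_{i=0}^{n-1}\|T^{i}\|,
\]
which is polynomial in $n$ since each $\|T^i\|$ is. Note that your containment $\Phi^n(B_R(p))\subset B_{P_R(n)}(\Phi^n(p))$ already uses $\|T^n\|$ correctly; only the $f$-versus-$\Phi$ comparison needs the revised telescoping. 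With this correction the rest of your argument goes through and agrees with the paper's.
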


As stated in their paper, the proposition only holds under certain additional
assumptions which are now known to be unnecessary.  To remove these
assumptions, we adapt the proof slightly.

\begin{proof}
    By Proposition \ref{unit-det}, the matrix associated to $f$ has at least
    one eigenvalue of modulus one.  If the matrix is not partially hyperbolic,
    all three eigenvalues must have modulus one.

    At the core of the proof given in \cite{RHRHU-nil} is the following fact:
    \begin{quote}
        If all eigenvalues are of modulus at most one, and if $J\subset\H$ is
        an unstable curve of $f$, there is a polynomial $p_1$ such that for
        all positive integers $n$
        \[ \diam\ f^n(J)\,\le\,p_1(n).\]
    \end{quote}
    Therefore, $\diam\ U_1(f^n(J)) \le p_2(n)$ for another
    polynomial, $p_2$.  As three-dimension\-al nilmanifolds have polynomial
    growth of volume (see, for instance, Lemma 6.4 again in \cite{RHRHU-nil}),
    there is a polynomial $p_3$ such that
    \[ \vol\ U_1(f^n(J))\,\le\,p_3(n).\]
    This contradicts the exponential growth required by Corollary \ref{fuvol}.
\end{proof}

Finally, we will need a result by J.~Franks to establish a semi-conjugacy
between a partially hyperbolic system on a nilmanifold and a two-dimensional
hyperbolic toral automorphism.  This will be discussed in more detail in
Section \ref{leafconj} when the time comes for its use.

\medskip

The proof of Theorem \ref{mainthm} proceeds by gradually establishing
increasingly stronger properties about the foliations on the universal cover.
One such property is \GPS.
An Anosov system, lifted to the universal cover, has \GPS\ if every stable
leaf intersects every unstable leaf exactly once.  It is not entirely clear
how best to extend this notion to partially hyperbolic systems.  We use the
following definition.
\begin{quote}
A partially hyperbolic system $f:\tilde M\to\tilde M$ has {\em \GPS} if it is
dynamically coherent and the following four properties hold:
\begin{enumerate}
    \item For $p,q\in\tilde M$,\ \ $\Wu(p)$ and $\Wcs(q)$ intersect exactly
          once.
    \item For $p,q\in\tilde M$,\ \ $\Ws(p)$ and $\Wcu(q)$ intersect exactly
          once.
    \item For $p$ and $q$ on the same center-unstable leaf,
          $\Wu(p)$ and $\Wc(q)$ intersect exactly once.
    \item For $p$ and $q$ on the same center-stable leaf,
          $\Ws(p)$ and $\Wc(q)$ intersect exactly once.
\end{enumerate}
\end{quote}
This \GPS\ allows us to understand the foliations at large
scales in a similar way to transverse foliations in small neighbourhoods.

\begin{thm} \label{GPSthm}
    If $f_0:\HG\to\HG$ is partially hyperbolic, then a lift $f:\H\to\H$ has
    \GPS.
\end{thm}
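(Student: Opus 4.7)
By Propositions \ref{phmat} and \ref{nicemat}, after algebraically conjugating $f_0$ we may assume that the algebraic part $\Phi$ of a lift $f$ is the explicit diagonal Lie group automorphism $(x,y,z)\mapsto(\lambda_1 x,\lambda_2 y,\pm z)$ with $|\lambda_1|<1<|\lambda_2|$, so that Proposition \ref{unifbdd} yields $d(f,\Phi)\le C$ in the left-invariant metric. A direct computation using the left-invariant vector fields $X$, $Y$, $Z$ shows that $\Phi$ itself is dynamically coherent and that its foliations satisfy the four intersection properties in the definition of \GPS. The plan is to transfer these properties from $\Phi$ to $f$.

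The first step is to establish dynamical coherence of $f$. The approach is to adapt the Brin--Burago--Ivanov branching foliation construction \cite{BBI2}: produce branching surfaces tangent to $\Ecsf$ and $\Ecuf$, then exclude branching by the same volume argument that proves Proposition \ref{phmat}, namely that a nontrivial branching locus would confine an exponentially expanding unstable curve to a tube of merely polynomial volume on the Heisenberg group, contradicting Corollary \ref{fuvol}. This step is expected to occupy the bulk of Section \ref{bbox}.

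Granting dynamical coherence, uniqueness in parts (1) and (2) of the \GPS\ definition is immediate from Proposition \ref{int-unique}, and uniqueness in (3) and (4) follows by applying the same proposition inside a single center-unstable or center-stable leaf. For existence I would use the bounded distance $d(f,\Phi)\le C$ together with the explicit Heisenberg model. For $\Phi$, the Cartesian coordinate $y$ restricted to any unstable leaf is a homeomorphism onto $\R$; if instead $y$ were bounded on some leaf $\Wuf(p)$, iterating $f$ would yield an unstable curve of length $\sim\mu^n$ inside a tube of polynomial volume, contradicting Corollary \ref{fuvol}. So $y$ is unbounded in both directions along $\Wuf(p)$, and symmetrically $x$ is unbounded along $\Wsf(p)$. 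An analogous argument forces center-stable leaves of $f$ to meet every horizontal level $\{y=y_0\}$. Combining these observations with proper embedding of leaves (Corollary \ref{propembed}) produces the four required intersections.

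The main obstacle will be the dynamical coherence step, together with the delicate task of converting ``bounded distance in the left-invariant metric'' into usable control of the Cartesian coordinates. The left-invariant and Euclidean metrics disagree significantly in the $z$-direction when $x$ is large, so any quantitative transfer from $\Phi$ to $f$ must either restrict to regions where $x$ is controlled, or first show that leaves of $\Wuf$ and $\Wsf$ asymptote the $Y$- and $X$-directions of $\Phi$ in a sufficiently strong sense.
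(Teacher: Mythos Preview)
Your overall architecture matches the paper's: algebraically conjugate to a diagonal model, establish dynamical coherence, then use the coordinate projections $\pis,\piu$ to verify the four intersection properties. But two points diverge substantively, and one of them is an error.

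\textbf{Dynamical coherence.} The paper does not invoke branching foliations. It argues directly from the reachable-set characterization: if $\Ecs$ fails unique integrability, there is a point $p$ and a short unstable arc $J$ all of whose points are joined to $p$ by $cs$-paths of length $<\epsilon$, whence $\diam f^n(J)\le 2\Cph\gamma^n\epsilon$. Separately (Corollary~\ref{udiam}) the paper shows $\diam f^n(J)>C\alpha^n$ for any $\alpha<\lambda$, and the contradiction comes from choosing $\gamma<\alpha<\lambda$. The existence of such an $\alpha$ hinges on the inequality $\mu\le\lambda$, proved by a box-volume estimate (Lemma~\ref{boxgrow} and the lemma following it). You never mention this comparison; without it there is no \emph{a priori} reason that $\gamma<\lambda$, and both the coherence argument and the center-stable bound below break down. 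This growth-rate wedge is the linchpin of the section.

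\textbf{Center-stable leaves.} Your claim that an analogous argument ``forces center-stable leaves of $f$ to meet every horizontal level $\{y=y_0\}$'' is backward. The paper proves the opposite (Lemma~\ref{csbdd}): center-stable leaves are \emph{bounded} in the $y$-coordinate, because a $y$-separation exceeding some $M$ forces growth at rate $\alpha^n$ under iteration (Proposition~\ref{yexpand}), which is incompatible with the $cs$ rate $\gamma^n<\alpha^n$. This boundedness is precisely what drives existence of intersections: each $\Wcs(p)$ separates $\H$ into two pieces containing the half-spaces $\{y<\piu(p)-R\}$ and $\{y>\piu(p)+R\}$ respectively (Lemma~\ref{halfspaces}), and since $\Wu(q)$ is unbounded in $y$ in both directions it must cross (Proposition~\ref{int-exists}). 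Your version would not yield a separation argument.

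\textbf{Metric comparison.} You correctly flag the left-invariant versus Euclidean issue. The paper's device is exactly your first suggested option: Corollary~\ref{xbound} shows the slab $\{|x|\le x_0\}$ is forward-invariant, and on that slab the two metrics are uniformly comparable; all box-volume and growth estimates are then carried out there after translating by a deck transformation.
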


Anosov systems are expansive.  For any such system $f:M\to M$, there is an
$\epsilon>0$ such that
\[
    d(f^n(p),f^n(q)) < \epsilon
\]
for all $n\in\Z$ if and only if the points $p$ and $q$ coincide.

To generalize this notion to partially hyperbolic systems, we must somehow
account for the possibly unexpansive behaviour along the center direction.
One good candidate is the notion of ``plaque expansiveness'' first developed
in \cite{HPS}.  In this paper, we instead exploit the algebraic structure of
the Heisenberg group to give a specific generalization of expansiveness.

\begin{lemma}[\CL] \label{CLemma}
    Let $\H$ be the Heisenberg group with the projection
    \[  P:\H\to\R^2,  \quad  (x,y,z)\mapsto(x,y). \]
    Suppose $f_0:\HG\to\HG$ is partially hyperbolic with lift $f:\H\to\H$.
    Then $p,q\in\H$ lie on the same center leaf if and only if
    $ \|Pf^n(p) - Pf^n(q)\| $
    is bounded for all $n\in\Z$.
\end{lemma}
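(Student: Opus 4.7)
The plan is to reduce the \CL\ to a bounded-box control on center leaves combined with the \GPS\ of Theorem \ref{GPSthm}.

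For the forward direction, using the earlier results of Section \ref{bbox}, I would first establish a bounded-box property: every center leaf of $f$ in $\H$ projects under $P$ to a subset of $\R^2$ of diameter at most a uniform constant $K$ independent of the leaf. This rests on Proposition \ref{unifbdd} (the displacement of $f$ from its algebraic part $\Phi$ is bounded), Proposition \ref{phmat} (so $\Phi$'s center leaves project to single points), \GPS\ applied to pairs of points on a common center leaf, and Corollary \ref{fuvol} together with the polynomial volume growth of $\H$, which together rule out unbounded drift of a center leaf in the $(x,y)$-plane. Granting it, the forward direction is immediate: if $p$ and $q$ lie on a common center leaf then $f^n(p)$ and $f^n(q)$ lie on a common center leaf for every $n\in\Z$, and therefore $\|Pf^n(p)-Pf^n(q)\|\le K$ for every $n$.

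For the reverse direction, assume $\|Pf^n(p)-Pf^n(q)\|$ is bounded for all $n\in\Z$. Property (1) of \GPS\ yields a unique $r\in\Wuf(p)\cap\Wcsf(q)$, and property (4) of \GPS\ applied to $r$ and $q$ on their common center-stable leaf yields a unique $s\in\Wsf(r)\cap\Wcf(q)$. As $n\to+\infty$, $\|Pf^n(r)-Pf^n(s)\|\to 0$ by stable contraction, while $\|Pf^n(s)-Pf^n(q)\|$ stays bounded by the forward direction just proved, so $\|Pf^n(p)-Pf^n(r)\|$ stays bounded. If $r\ne p$, then $p$ and $r$ share an unstable leaf and the arc from $f^n(p)$ to $f^n(r)$ has length at least $C^{-1}\mu^n$; Corollary \ref{fuvol} combined with the polynomial volume growth of $\H$ then forces the $P$-distance between its endpoints to diverge with $n$, a contradiction. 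Hence $r=p$, so $p\in\Wcsf(q)$; the symmetric argument starting from property (2) of \GPS\ gives $p\in\Wcuf(q)$, and Proposition \ref{int-unique} then forces $p\in\Wcf(q)$.

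The main obstacle is twofold: establishing the bounded-box property used in the forward direction, and converting the $\mu^n$-length bound on an unstable arc into a genuine lower bound on the $P$-separation of its endpoints in the reverse direction. Both reduce to comparing the foliations of $f$ in $\H$ against those of the algebraic model $\Phi$ via Proposition \ref{unifbdd}, and to exploiting the polynomial volume growth of the Heisenberg group in combination with Corollary \ref{fuvol}---precisely the estimates that Section \ref{bbox} is designed to assemble.
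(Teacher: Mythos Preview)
Your architecture is essentially the paper's: decompose via \GPS\ into a center-stable half and a center-unstable half, use boundedness of $cs$-leaves in the $\piu$-direction for the forward implication, and use growth of unstable arcs for the reverse. The paper packages the two halves as separate ``baby'' lemmas (Lemma~\ref{babyshadow} and its $cu$-analogue) and then intersects; you go directly for the center leaf, but the content is the same.

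There is one genuine gap. In the reverse direction you assert that if $r\ne p$ then ``Corollary~\ref{fuvol} combined with the polynomial volume growth of $\H$ forces the $P$-distance between its endpoints to diverge.'' Volume growth only tells you that the $\piu$-\emph{spread} of $f^n(J)$ is unbounded---this is Lemma~\ref{curveunbdd}---but it does \emph{not} by itself control the endpoints: a long unstable curve could a priori double back so that its endpoints stay close while interior points separate. The paper needs an extra topological step (Lemma~\ref{endsunbdd}): once two interior points of the curve are $\piu$-separated by more than $M+2R$, the half-space structure of the $cs$-leaves (Lemma~\ref{halfspaces}) traps each end of the curve on one side of a $cs$-leaf and forces the endpoints themselves to be at least $M$ apart. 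Your proposed toolkit (Proposition~\ref{unifbdd}, Corollary~\ref{fuvol}, polynomial volume growth) omits this half-space argument, and without it the reverse implication does not close.

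A smaller point: the bounded-box property you want for the forward direction is exactly Lemma~\ref{csbdd} together with its $cu$-analogue, and the paper's proof of Lemma~\ref{csbdd} is a direct growth-rate comparison (pick $\alpha$ with $\gamma<\alpha<\mu\le\lambda$ and apply Proposition~\ref{yexpand}), not the volume-plus-\GPS\ mechanism you sketch. The property itself is correct and available, so your use of it in the forward direction is fine; only the justification you give is off.
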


This says that if the projected orbits of two points on the universal cover
shadow each other, then the two points must lie on the same center leaf.

\medskip

In Section \ref{bbox}, we establish \GPS\ and prove the \CL\ for systems on
three-dimensional nilmanifolds.  Then in Section \ref{leafconj}, using
these two properties, we prove Theorem \ref{mainthm}, the main
classification result.  These two remaining sections are structured to be as
independent as possible, and may be read in either order.

\section{Bounding Boxes} \label{bbox} %{{{1

Throughout this section, assume $f_0:\HG\to\HG$ is partially hyperbolic and
$f:\H\to\H$ is a lift of $f_0$ to the universal cover.  
By Proposition \ref{phmat}, the matrix associated to $f$, has eigenvalues
$\pm\lambda^{-1}$, $\pm\lambda$, and $\pm1$ for some $\lambda>1$.
The two properties to be proved in this section, namely \GPS\ and the \CL,
hold true for $f$ if and only if they hold true for $f^2$.  Therefore, without
loss of generality, assume that the above eigenvalues are positive.

We further wish to compare the system to a Lie group automorphism which is
linear.
To realize this, we will replace $f_0$ by an algebraically conjugate system (as
defined in Section \ref{nilmanifolds}).
First, we must show that such a conjugation does not affect the properties we
wish to prove.

\begin{prop}
    Suppose diffeomorphisms $f_0:\HG\to\HG$ and $g_0:\HG'\to\HG'$ are
    algebraically conjugate.  Each of the following properties holds for $f_0$
    if and only if it holds for $g_0$:
    \begin{enumerate}
        \item partial hyperbolicity,
        \item dynamical coherence,
        \item \GPS,
        \item the \CL.
    \end{enumerate}
\end{prop}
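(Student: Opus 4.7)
The plan is to exploit the explicit algebraic form of the conjugation. Write $\Psi:\H\to\H$ for the Lie group automorphism that lifts the nilmanifold isomorphism $\Phi_0:\HG\to\HG'$, and set $g=\Psi f\Psi^{-1}$, which is a lift of $g_0$. Since $\HG$ and $\HG'$ are compact and $\Phi_0$ is a smooth diffeomorphism, both $\|d\Phi_0\|$ and $\|d\Phi_0^{-1}\|$ are uniformly bounded. For partial hyperbolicity I would set $\Ecg=d\Phi_0(\Ecf)$, and analogously for the unstable and stable bundles, and verify via the chain rule $Tg^n=d\Phi_0\circ Tf^n\circ d\Phi_0^{-1}$ that the estimates transfer with the same constants $\lambda,\gammahat,\gamma,\mu$ and a new $\Cph$ absorbing $\|d\Phi_0^{\pm1}\|$. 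Dynamical coherence would then follow immediately, because $\Phi_0(\Wcf),\Phi_0(\Wcuf),\Phi_0(\Wcsf)$ are foliations tangent to $\Ecg,\Ecug,\Ecsg$ and their uniqueness is inherited from the $f$ side.

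For \GPS\ the argument would be purely set-theoretic. The lift $\Psi$ is a homeomorphism of $\H$ intertwining $f$ and $g$, so it carries each $f$-invariant leaf to the corresponding $g$-leaf of the same type. In particular $\Wug(\Psi(p))\cap\Wcsg(\Psi(q))=\Psi\bigl(\Wuf(p)\cap\Wcsf(q)\bigr)$, and likewise for the three other intersection types appearing in the definition, so the four one-point intersection conditions for $f$ are equivalent to those for $g$.

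The step I expect to require the most care is the \CL, because the projection $P:(x,y,z)\mapsto(x,y)$ does not in general commute with $\Psi$. To handle it I would invoke the structure of Lie group automorphisms of $\H$ derived in Section \ref{nilmanifolds}: every such $\Psi$ has the form $(x,y,z)\mapsto(A(x,y),\,cz+q(x,y))$ for an invertible $2\times 2$ matrix $A$, scalar $c$, and quadratic polynomial $q$. This yields the commutation relation $P\Psi=AP$, and hence
\[Pg^n(p')-Pg^n(q')\;=\;A\bigl(Pf^n\Psi^{-1}(p')-Pf^n\Psi^{-1}(q')\bigr)\]
for all $n\in\Z$ and $p',q'\in\H$. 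Since $A$ is invertible with bounded inverse, boundedness in $n$ of the left-hand sequence is equivalent to boundedness of the right-hand one. The geometric condition also transfers, because $\Psi$ sends the center foliation of $f$ onto that of $g$, so the \CL\ for $f$ and for $g$ would be equivalent.
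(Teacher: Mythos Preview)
Your proposal is correct and follows essentially the same approach as the paper: items (1)--(3) are handled by pushing forward the splitting and foliations under the smooth conjugacy, and item (4) is handled via the explicit form $\Psi(x,y,z)=(A(x,y),cz+q(x,y))$, the resulting commutation $P\Psi=AP$, and the invertibility of $A$. You supply somewhat more detail on the first three items than the paper (which simply calls them immediate), but the substantive argument for the \CL\ is identical.
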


\begin{proof}
    For the first three items, the proof is immediate, as the subbundles of
    the partially hyperbolic splitting and any foliations tangent to these
    subbundles are unaffected when pushed forward by a smooth conjugacy.

    The last item requires some explanation.  Suppose $f_0$ is algebraically
    conjugate to $g_0$.  Then on the universal cover, there are  lifts $f$ and
    $g$ satisfying $g=\Phi f \Phi^{-1}$ for some Lie group automorphism
    $\Phi:\H\to\H$.  As previously noted, $\Phi$ is of the form
    \[  \Phi(x,y,z) = \bigl(A(x,y), cz + p(x,y)\bigr) \]
    for a linear map $A$ on $\R^2$, a constant $c\in\R$, and some polynomial
    $p:\R^2\to\R$.  In particular, the projection $P(x,y,z)=(x,y)$ used in the
    statement of the \CL\ satisfies the relation $P\Phi=AP$.  Then,
    \begin{align*}
        \|Pg^n(p) &- Pg^n(q)\|
            \quad\text{is bounded for all $n\in\Z$.} \spacedblarrow \\
        \|P\Phi f^n\Phi^{-1}(p) &- P\Phi f^n\Phi^{-1}(q)\|
            \quad\text{is bounded for all $n\in\Z$.} \spacedblarrow \\
        \|APf^n\Phi^{-1}(p) &- APf^n\Phi^{-1}(q)\|
            \quad\text{is bounded for all $n\in\Z$.} \spacedblarrow \\
        \|Pf^n\Phi^{-1}(p) &- Pf^n\Phi^{-1}(q)\|
            \quad\text{is bounded for all $n\in\Z$.}
    \end{align*}
    As $\Phi^{-1}$ maps center leaves of $g$ to center leaves of $f$,
    this shows that the \CL\ holds for $f$ if and only if it holds for $g$.
\end{proof}

We are now free to algebraically conjugate the system.  By Proposition
\ref{nicemat}, assume that
    \begin{equation} \label{diaglambda}
        \begin{pmatrix}
            \lambda\smallinv & & \\ & \lambda & \\ & & 1
        \end{pmatrix}
    \end{equation}
is the matrix associated to a lift $f:\H\to\H$ of $f_0$.  This matrix
corresponds to the Lie group automorphism
\begin{equation} \label{deflinear}
    \Linear:\H\to\H, \quad (x,y,z)\mapsto(\lambda^{-1}x,\lambda y,z).
\end{equation}
By Proposition \ref{unifbdd}, the distance $d(f(p),\Linear(p))$ is uniformly
bounded for all $p\in\H$.

This bound holds when measured with the left-invariant metric, a natural metric
to consider on Heisenberg space.
As humans living in more-or-less Euclidean space, however, this metric is a pain
to understand.  Doubters of the difficulties involved are invited to
calculate a formula for $d\bigl( (0,0,0), (0,0,z) \bigr)$ from the
definition of $d$.  (Hint: It's not $|z|$.)  As such, we would like to switch
our analysis from the left-invariant metric to the Euclidean metric as quickly
as possible.  The next few steps show that this is indeed possible.

Define projections $\pis(x,y,z)=x$ and $\piu(x,y,z)=y$ and note the following.

\begin{lemma} \label{distcomp}
    For $p,q\in\H$,
    \begin{align*}
        |\pis(p)-\pis(q)| &\le d(p,q)\quad\text{and} \\
        |\piu(p)-\piu(q)| &\le d(p,q).
    \end{align*}
\end{lemma}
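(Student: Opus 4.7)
The plan is to unwind the Riemannian definition of $d(p,q)$ as the infimum of lengths of smooth curves joining $p$ to $q$ in the left-invariant metric, and reduce the lemma to a pointwise bound on the coordinate $1$-forms $dx$ and $dy$.

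First I would establish that $|dx(v)|\le\|v\|$ and $|dy(v)|\le\|v\|$ for every tangent vector $v$. In global $(x,y,z)$ coordinates the left-invariant basis can be written as $X=\deldel{x}$, $Y=\deldel{y}+x\deldel{z}$, and $Z=\deldel{z}$ at the point $(x,y,z)$, so $dx$ vanishes on $Y$ and $Z$ while $dx(X)=1$, and symmetrically for $dy$ with $X$ and $Z$. Since $\{X,Y,Z\}$ is orthonormal, expanding $v=aX+bY+cZ$ gives $\|v\|=\sqrt{a^2+b^2+c^2}\ge|a|=|dx(v)|$ and likewise $\|v\|\ge|b|=|dy(v)|$.

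Next, for any smooth curve $\gamma:[0,1]\to\H$ with $\gamma(0)=p$ and $\gamma(1)=q$, the fundamental theorem of calculus gives $\pis(q)-\pis(p)=\int_0^1 dx(\dot\gamma(t))\,dt$, whence
\[
    |\pis(q)-\pis(p)|\ \le\ \int_0^1|dx(\dot\gamma(t))|\,dt\ \le\ \int_0^1\|\dot\gamma(t)\|\,dt\ =\ \length(\gamma),
\]
and the analogous chain holds for $\piu$. Taking the infimum over all such $\gamma$ yields both inequalities.

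There is no genuine obstacle; this is essentially the observation that coordinate functions associated to an orthonormal frame are $1$-Lipschitz. The only point worth keeping in mind is that the ``twist'' of $Y$ in the $z$-direction lies in $\ker dx\cap\ker dy$, so it is invisible to the two projections and both $\pis$ and $\piu$ are controlled exactly as in the Euclidean case. Incidentally, no analogous bound can hold for the $z$-coordinate, which is precisely why $d$ and the Euclidean metric differ so dramatically in the $z$-direction and why the lemma is stated only for $\pis$ and $\piu$.
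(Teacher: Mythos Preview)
Your proof is correct and follows essentially the same route as the paper's: decompose the velocity of a curve in the orthonormal frame $\{X,Y,Z\}$, observe that the $x$- (respectively $y$-) component of the velocity has absolute value at most the Riemannian speed, integrate, and take the infimum over curves. The only difference is phrasing---you package the pointwise bound as $|dx(v)|\le\|v\|$ via the dual $1$-form, whereas the paper writes out the component $\alpha_X'(t)$ explicitly---but the content is identical.
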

\begin{proof}
    We prove the first inequality.  The proof for the second equality is
    essentially the same.
    Let $\alpha:[0,1]\to\H$ be a $C^1$ path from $p$ to $q$.  At each point,
    $\alpha'(t)$ splits into components
    \[ \alpha_X'(t), \quad \alpha_Y'(t), \quad\text{and}\quad \alpha_Z'(t) \]
    such that
    \[ \alpha'(t) =
           \alpha_X'(t) X_{\alpha(t)} + 
           \alpha_Y'(t) Y_{\alpha(t)} + 
           \alpha_Z'(t) Z_{\alpha(t)}. \]
    By the Fundamental Theorem of Calculus,
    \[ q-p = \alpha(1) - \alpha(0) = \int_0^1 \alpha'(t) dt \]
    where the subtraction is coordinate-wise.
    When projecting to the first component, this yields
    \[ \pis(q) - \pis(p) = \int_0^1 \alpha_X'(t) dt.\]
    The length of $\alpha$ with respect to the left-invariant metric is
    \[ \length(\alpha) = \int_0^1 \sqrt{
           (\alpha_X'(t))^2 +
           (\alpha_Y'(t))^2 +
           (\alpha_Z'(t))^2 } dt. \]
    Therefore,
    \[ \length(\alpha) \ge \int_0^1 |\alpha_X'(t)| dt \ge
           \left| \int_0^1 \alpha_X'(t) dt \right| =
           \bigl|\pis(q)-\pis(p)\bigr|. \]
    As $d(p,q)$ is the infimum of such lengths of paths, the lemma is proved.
\end{proof}

\begin{cor} \label{xbound}
    There is $x_0>0$ such that for $p\in\H$
    \[ |\pis(p)| \le x_0 \spacearrow |\pis f(p)| \le x_0. \]
\end{cor}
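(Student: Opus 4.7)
The plan is to exploit the fact that the algebraic part of $f$ contracts the $\pi^s$-coordinate by the factor $\lambda^{-1} < 1$, while $f$ itself differs from its algebraic part by only a bounded error. The bound $x_0$ is then determined by balancing contraction against this bounded error.

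First I would apply Proposition \ref{unifbdd} to the lift $f:\H\to\H$ and its algebraic part $\Linear$, obtaining a constant $C > 0$ with $d(f(p), \Linear(p)) < C$ for every $p \in \H$ (distance measured in the left-invariant metric). Next I would apply Lemma \ref{distcomp} to the points $f(p)$ and $\Linear(p)$, which yields
\[ \bigl|\pi^s f(p) - \pi^s \Linear(p)\bigr| \le d(f(p), \Linear(p)) < C. \]
Since $\Linear(x,y,z) = (\lambda^{-1} x, \lambda y, z)$ by \eqref{deflinear}, we have $\pi^s \Linear(p) = \lambda^{-1} \pi^s(p)$, so the triangle inequality gives
\[ \bigl|\pi^s f(p)\bigr| \le \lambda^{-1}\bigl|\pi^s(p)\bigr| + C. \]

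From here, choosing any $x_0 \ge C/(1 - \lambda^{-1})$ suffices: if $|\pi^s(p)| \le x_0$ then $|\pi^s f(p)| \le \lambda^{-1} x_0 + C \le x_0$, since $\lambda > 1$ makes $1 - \lambda^{-1}$ strictly positive. There is no real obstacle here; the corollary is essentially just the observation that the uniform bound of Proposition \ref{unifbdd}, together with the coordinate-wise Lipschitz estimate of Lemma \ref{distcomp}, lets us pretend for the purposes of the $\pi^s$-coordinate that $f$ is the contraction $\Linear$ up to bounded error. The only subtlety worth flagging is that Lemma \ref{distcomp} is what legitimately transfers the left-invariant distance bound into a Euclidean-style bound on the $x$-coordinate, which is exactly the transition the preceding discussion advertises.
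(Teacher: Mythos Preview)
Your proof is correct and follows essentially the same approach as the paper: both use Proposition \ref{unifbdd} for the uniform bound $d(f(p),\Linear(p))<C$, Lemma \ref{distcomp} to convert this into $|\pi^s f(p)-\pi^s\Linear(p)|<C$, and then choose $x_0$ so that $\lambda^{-1}x_0+C\le x_0$. The only cosmetic difference is that you solve explicitly for $x_0\ge C/(1-\lambda^{-1})$ while the paper simply asserts such an $x_0$ exists.
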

\begin{proof}
    Recall that for $\Linear:\H\to\H$ as defined in \eqref{deflinear}, the
    distance between $f(p)$ and $\Linear(p)$ is uniformly bounded, say by
    $C>0$.
    The definition also implies that $\pis \Linear(p) = \lambda^{-1}\pis(p)$
    and as $\lambda>1$, there is $x_0>0$ such that
    $\lambda^{-1} x_0 + C < x_0$.  Then,
    \begin{align*}
        \bigl|\pis\bigl(p\bigr)\bigr| &\le x_0 \spacearrow \\
        \bigl|\pis f(p)\bigr|
                     &\le \bigl|\pis \Linear(p)\bigr| +
                          \bigl|\pis f(p) - \pis \Linear(p)\bigr| \\
                     &\le \bigl|\pis \Linear(p)\bigr| + d\bigl(f(p), \Linear(p)\bigr) \\
                     &\le \lambda^{-1} x_0 + C < x_0.
    \end{align*}
\end{proof}

For $0 < x_0,y_0,z_0 \le \infty$, define the set
\[ \BB{x_0}{y_0}{z_0} =
   \{ (x,y,z)\in\H : |x|\le x_0, |y|\le y_0, |z|\le z_0 \}. \]
The last corollary may be restated as
\[ f(\BB{x_0}{\infty}{\infty}) \subset \BB{x_0}{\infty}{\infty}. \]

Note that if we restrict our analysis to $\BB{x_0}{\infty}{\infty}$, the
``Euclidean length'' of the basis vector $Y=\deldel{y} + x \deldel{z}$ is
bounded and therefore distances in the left-invariant metric and the Euclidean
metric are comparable.
    Fixing $x_0>0$, for every $a>0$ there is $b>0$ such that
    \begin{equation} \label{complength}
        d(p,q) < a \spacearrow \|p-q\| < b
    \end{equation}
    for all $p,q\in\BB{x_0}{\infty}{\infty}$.
Restricted to this region, $f$ and the linear map $\Linear$ are a bounded distance
apart, not just when measured with the left-invariant metric, but with the
Euclidean metric as well.  This gives a means to measure how quickly subsets
of Heisenberg space grow under iterates of $f$.

\begin{lemma} \label{boxgrow}
    There is $x_0>0$ and $c>0$ such that
    \[ f(\BB{x_0}{y}{z}) \subset \BB{x_0}{\lambda y+c}{z+c} \]
    for all $y,z>0$.
    Moreover, if $\beta > \lambda$ then there is $y_0>0$ such that
    \[ f^n(\BB{x_0}{y}{z}) \subset \BB{x_0}{\beta^ny}{z+nc} \]
    for all $y > y_0$, $z > 0$ and positive integers $n$.
\end{lemma}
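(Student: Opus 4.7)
The plan is to pass from the left-invariant bound $d(f(p),\Linear(p))<C$ given by Proposition \ref{unifbdd} to an analogous bound in the Euclidean metric on a suitable slab, and then iterate coordinate-wise.

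First I would fix $x_0>0$ large enough that both Corollary \ref{xbound} applies and the bounded-distance implication \eqref{complength} holds on $\BB{x_0}{\infty}{\infty}$. By Corollary \ref{xbound}, $f$ preserves this slab, and $\Linear$ preserves it as well since $|\lambda^{-1}x|\le x_0$ whenever $|x|\le x_0$. Thus for every $p\in\BB{x_0}{\infty}{\infty}$, both $f(p)$ and $\Linear(p)$ lie in the slab, and applying \eqref{complength} to the uniform left-invariant bound $d(f(p),\Linear(p))<C$ produces a constant $c>0$ with $\|f(p)-\Linear(p)\|<c$ for all such $p$.

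The one-step inclusion is then immediate from this Euclidean bound. If $p=(x,y,z)\in\BB{x_0}{y_0}{z_0}$, then $\Linear(p)=(\lambda^{-1}x,\lambda y,z)$ and each coordinate of $f(p)$ differs from that of $\Linear(p)$ by at most $c$. The $x$-coordinate is already controlled by Corollary \ref{xbound}, while $|\piu f(p)|\le \lambda|y|+c\le\lambda y_0+c$ and $|\pi_Z f(p)|\le z_0+c$. This gives $f(\BB{x_0}{y_0}{z_0})\subset\BB{x_0}{\lambda y_0+c}{z_0+c}$, which is the first assertion (after possibly enlarging $c$).

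For the iterated statement, I would induct, obtaining
\[
    f^n(\BB{x_0}{y}{z})\subset\BB{x_0}{\lambda^n y+c\tfrac{\lambda^n-1}{\lambda-1}}{z+nc}.
\]
The remaining task is to absorb the geometric-sum error term into the factor $\beta^n$ by requiring $y$ sufficiently large. Concretely, the inequality $\lambda^n y+c\frac{\lambda^n-1}{\lambda-1}\le\beta^n y$ is equivalent to $y\ge r_n$ where $r_n=\frac{c(\lambda^n-1)}{(\lambda-1)(\beta^n-\lambda^n)}$. Since $\beta>\lambda$, one checks $r_n\to 0$ as $n\to\infty$, and $r_0=0$, so $y_0\eqdef\sup_{n\ge 0}r_n$ is finite and any $y>y_0$ works for all $n$. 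The main (minor) obstacle is just organizing this error-term estimate so that a single $y_0$ works uniformly in $n$; everything else reduces to repeated application of the one-step bound.
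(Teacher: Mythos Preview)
Your argument is correct and follows the same route as the paper: pass to a Euclidean bound on the slab via Corollary~\ref{xbound} and \eqref{complength}, read off the one-step inclusion coordinate-wise, then iterate. The only organizational difference is in the second part: the paper picks $y_0$ so that $\beta y_0>\lambda y_0+c$ and inducts directly on the inclusion $f^n(\BB{x_0}{y}{z})\subset\BB{x_0}{\beta^n y}{z+nc}$, which sidesteps the geometric sum and the sequence $r_n$ entirely (and avoids the harmless $0/0$ at $n=0$).
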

\begin{figure}[p!]
\begin{center}
\includegraphics{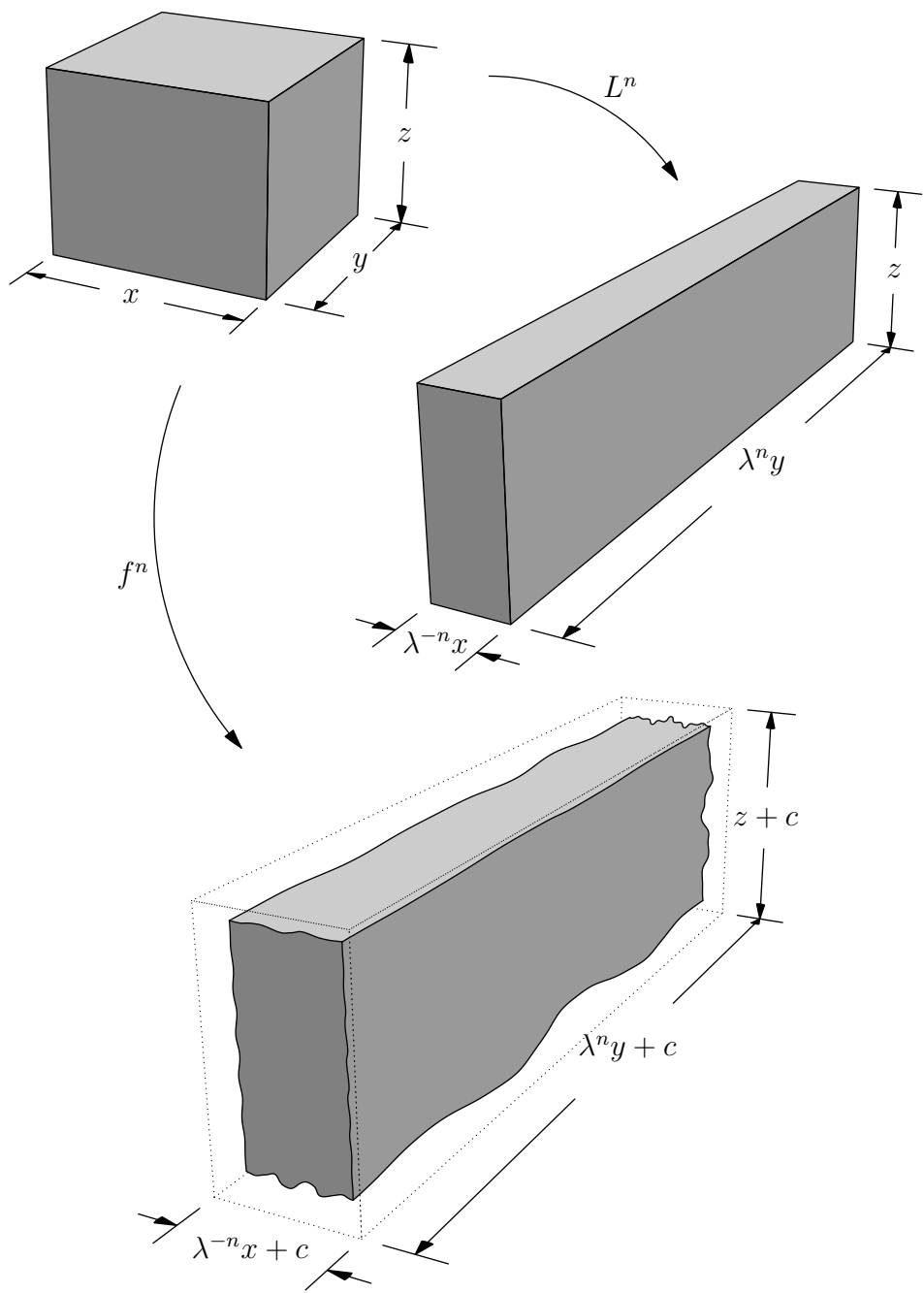}
\end{center}
\caption{The images of a box under iterates of the linear map $\Linear$ and
the diffeomorphism $f$.}
\label{fig:bbox}
\end{figure}
\begin{proof}
    As $d(\Linear(p), f(p))$ is bounded on all of $\H$, it follows from
    \eqref{complength} that there is $c>0$ such that
    \[ \|f(p) - \Linear(p)\| < c \]
    for $p\in\BB{x_0}{\infty}{\infty}$, where $x_0$ is as in Corollary
    \ref{xbound}.  Then, for $y,z>0$,
    \begin{align*}
        \Linear(\BB{x_0}{y}{z}) &= \BB{\lambda^{-1}x_0}{\lambda y}{z} \spacearrow \\
        f(\BB{x_0}{y}{z}) &\subset
                \BB{\lambda^{-1}x_0+c}{\lambda y + c}{z + c}
    \end{align*}
    Since we already know
    $f(\BB{x_0}{\infty}{\infty}) \subset \BB{x_0}{\infty}{\infty}$,
    this establishes the first statement of the lemma.  To prove the second
    statement, just choose $y_0$ large enough that
    \[\beta y_0 > \lambda y_0 + c \]
    and apply induction.
\end{proof}

We now know enough to compare $\lambda$, the expanding eigenvalue
determined by the algebraic part of the diffeomorphism, with $\mu$, the lower
bound on the growth rate of the unstable direction in the definition of
partial hyperbolicity.

\begin{lemma}
    $\mu \le \lambda$
\end{lemma}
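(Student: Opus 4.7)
The plan is to exploit the tension between the forced exponential growth of unstable curves (Corollary \ref{fuvol}) and the much slower spreading of iterates prescribed by Lemma \ref{boxgrow}. First I would pick any unstable arc $J$ of positive length, small enough to sit inside some $\BB{x_0}{y_1}{z_1}$ with $y_1\ge y_0$, where $x_0$ and $y_0$ are the constants from Corollary \ref{xbound} and Lemma \ref{boxgrow}. For an arbitrary $\beta>\lambda$, Lemma \ref{boxgrow} then yields
\[
    f^n(J)\,\subset\,\BB{x_0}{\beta^n y_1}{z_1+nc}\qquad\text{for all } n\ge 0.
\]

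The second step is to enclose the left-invariant $1$-neighborhood $U_1(f^n(J))$ in a box of comparable size. By Lemma \ref{distcomp}, any point within left-invariant distance $1$ of $\BB{x_0}{\beta^n y_1}{z_1+nc}$ has its $x$- and $y$-coordinates within $1$ of the box. The $z$-coordinate is the delicate case, since in the left-invariant metric $d\bigl((0,0,0),(0,0,z)\bigr)$ grows like $\sqrt{|z|}$ rather than linearly; however, the $x$-bound confines everything to $|x|\le x_0+1$, and on such a region the comparability relation already recorded at \eqref{complength} forces the $z$-coordinate to vary by at most an absolute constant $K$. Thus $U_1(f^n(J))\subset\BB{x_0+1}{\beta^n y_1+1}{z_1+nc+K}$.

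Finally, since the left-invariant and Euclidean volume forms coincide on $\H$, the enlarged box has volume of order $n\beta^n$. On the other hand, Corollary \ref{fuvol} gives the lower bound $\vol U_1(f^n(J))\ge C\mu^n\cdot\length(J)$. Combining,
\[
    C\mu^n\cdot\length(J)\,\le\,C'\,n\beta^n,
\]
and taking $n$-th roots as $n\to\infty$ yields $\mu\le\beta$. Since $\beta>\lambda$ was arbitrary, the desired inequality $\mu\le\lambda$ follows.

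The main technical obstacle I expect is the second step: the left-invariant unit ball extends much further in the $z$-direction than its Euclidean counterpart, so the naive ``thicken by $1$ in each coordinate'' argument does not immediately apply. The resolution is built into the preceding lemmas, namely that Corollary \ref{xbound} restricts the whole analysis to a region of uniformly bounded $|x|$, and on that region the two metrics are comparable, so the unusual shape of left-invariant balls does not spoil the polynomial-in-$n$ volume bound.
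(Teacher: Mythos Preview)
Your proposal is correct and follows essentially the same route as the paper: trap $f^n(J)$ in a box growing like $\beta^n$ in the $y$-direction via Lemma \ref{boxgrow}, enlarge by a fixed amount to contain $U_1(f^n(J))$, bound the volume by $\beta^n$ times a polynomial, and compare with the $\mu^n$ lower bound from Corollary \ref{fuvol}. Your discussion of the $z$-coordinate is more explicit than the paper's (which simply asserts a constant $d$ works), but the underlying justification is the same use of \eqref{complength} on the bounded-$x$ slab.
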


\begin{proof}
    Take $\beta$ such that $\lambda<\beta$.
    Let $J$ be a small unstable curve inside a region $\BB{x_0}{y}{z}$.  By
    the previous lemma,
    \begin{align*}
        f^n(J) &\subset \BB{x_0}{\beta^ny}{z+nc} \spacearrow \\
        U_1(f^n(J)) &\subset \BB{x_0+d}{\beta^ny+d}{z+nc+d}
    \end{align*}
    for constants $c,d>0$.  Volume in Heisenberg space is computed exactly as
    in Euclidean space, and therefore
    \[ \vol\ U_1(f^n(J)) < \beta^np(n) \]
    for some polynomial $p$.  By Corollary \ref{fuvol},
    \[ C\mu^n < \vol\ U_1(f^n(J)) \]
    for some constant $C>0$.  The only way this can hold for all $n\ge 0$ is
    if $\mu\le\beta$.  Since $\beta$ is any constant greater than $\lambda$,
    this means $\mu\le\lambda$ as well.
\end{proof}

\begin{prop} \label{yexpand}
    For every $\alpha<\lambda$, there is $M>0$ such that
    \[ \bigabs{\piu(p) - \piu(q)} \ge M \spacearrow
       \bigabs{\piu f^n(p)  - \piu f^n(q) } > \alpha^n \]
    for all $p,q\in\H$ and positive integers $n$.
\end{prop}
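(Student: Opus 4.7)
The plan is to reduce to a one-step recurrence by comparing $f$ with the linear map $\Linear$ on the $\piu$-coordinate. Since $\piu\Linear(p)=\lambda\piu(p)$ by \eqref{deflinear}, and since $d(f(p),\Linear(p))<C_0$ for a uniform constant $C_0$ by Proposition \ref{unifbdd}, Lemma \ref{distcomp} immediately yields the one-step bound
\[ \bigabs{\piu f(p)-\lambda\piu(p)}\le C_0\quad\text{for all }p\in\H. \]

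Next I would iterate this. Writing $\delta(p)\eqdef\piu f(p)-\lambda\piu(p)$, a straightforward induction on $n$ produces
\[ \piu f^n(p)-\piu f^n(q) = \lambda^n\bigl(\piu(p)-\piu(q)\bigr) + \sum_{k=0}^{n-1}\lambda^{n-1-k}\bigl[\delta(f^k(p))-\delta(f^k(q))\bigr], \]
in which the error sum has absolute value at most $2C_0\sum_{k=0}^{n-1}\lambda^{n-1-k}\le\tfrac{2C_0}{\lambda-1}\lambda^n$. Hence, if $\bigabs{\piu(p)-\piu(q)}\ge M$, the triangle inequality gives
\[ \bigabs{\piu f^n(p)-\piu f^n(q)}\ge\lambda^n\Bigl(M-\tfrac{2C_0}{\lambda-1}\Bigr). \]

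To finish, I would exploit $\alpha<\lambda$: the quantity $(\alpha/\lambda)^n$ is bounded above by $\max(\alpha/\lambda,0)$ for every positive integer $n$, so choosing $M$ with $M-\tfrac{2C_0}{\lambda-1}>\alpha/\lambda$ forces $\lambda^n\bigl(M-\tfrac{2C_0}{\lambda-1}\bigr)>\alpha^n$ uniformly in $n$.

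I do not anticipate a serious obstacle. The only subtle point is that Proposition \ref{unifbdd} supplies its bound in the left-invariant metric, whereas the recurrence above lives in an ordinary Euclidean scalar coordinate; Lemma \ref{distcomp} is precisely the bridge that converts the left-invariant bound into the bound on $\delta$ and lets the argument proceed without having to contend with the Heisenberg geometry directly.
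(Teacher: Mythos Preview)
Your proof is correct and follows essentially the same approach as the paper: both derive the one-step bound $\bigabs{\piu f(p)-\lambda\,\piu(p)}\le C$ from Proposition~\ref{unifbdd} and Lemma~\ref{distcomp}, and then iterate. The only stylistic difference is that you telescope the errors into an explicit geometric sum, whereas the paper packages the same estimate as a one-step multiplicative recurrence $\bigabs{\piu f(p)-\piu f(q)}>\alpha\,\bigabs{\piu(p)-\piu(q)}$ valid once the gap exceeds $M$, and inducts on that.
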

\begin{proof}
    Let $C>0$ be such that
    $d \bigl( f(p),\Linear(p) \bigr) <C$ for all $p\in\H$, and
    choose $M>1$ large enough that $\lambda M - 2C > \alpha M$.

    Using the definition of $\Linear$ and Lemma \ref{distcomp},
    \begin{align*}
        \bigabs{\piu(p) - \piu(q)} &\ge M \spacearrow \\
        \bigabs{\piu f(p) - \piu f(q)}
            &\ge \bigabs{\piu \Linear(p)  - \piu \Linear(q)} -
                \bigabs{\piu f(p) -\piu \Linear(p)} \\
            &\phantom{\ge \bigabs{\piu \Linear(p) - \piu \Linear(q)} }
                \ - \bigabs{\piu f(q) -\piu \Linear(q)} \\
            &\ge \lambda\bigabs{\piu(p) - \piu(q)}
                - d(f(p),\Linear(p)) - d(f(q),\Linear(q)) \\
            &\ge \lambda\bigabs{\piu(p) - \piu(q)} - 2C \\
            &> \alpha\bigabs{\piu(p) -\piu(q)}.
    \end{align*}
    By induction,
    \[
        \bigabs{\piu f^n(p)  - \piu f^n(q)}
            > \alpha^n \bigabs{\piu(p) - \piu(q)}
            > \alpha^n.
    \]
\end{proof}

\begin{lemma} \label{curveunbdd}
    For $M>0$ there is $\ell>0$ such that any unstable curve $J$ of length
    greater than $\ell$ contains points $p,q$ with the property
    \[ |\piu(p) - \piu(q)| > M. \]
\end{lemma}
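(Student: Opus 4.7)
The plan is to argue by contradiction, using the volume estimate of Corollary \ref{fuvol} together with the box-growth bound of Lemma \ref{boxgrow}, in the same spirit as the proof that $\mu\le\lambda$. Suppose, for some $M>0$, there are unstable curves $J$ with $\length(J)$ arbitrarily large while $\sup_{p,q\in J}|\piu(p)-\piu(q)|\le M$.

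First I would apply Corollary \ref{xbound} and iterate forward a bounded number of times to reduce to the case $J\subset\BB{x_0}{\infty}{\infty}$; the $\piu$-spread grows only by a bounded factor during this reduction, by Proposition \ref{unifbdd} and Lemma \ref{distcomp}. In this strip the left-invariant and Euclidean metrics are comparable, and each application of $f$ changes the $z$-coordinate of a point by at most a constant. Iterating the one-step estimate $|\piu f(p)-\piu f(q)|\le\lambda|\piu(p)-\piu(q)|+2C$ then places $f^n(J)$ inside a translate of $\BB{x_0}{(M+C')\lambda^n}{Z_0+O(n)}$, where $Z_0$ is the $z$-spread of $J$. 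Comparing with Corollary \ref{fuvol} yields
\[
    \mu^n\length(J)\le O\bigl(\lambda^n M(Z_0+n)\bigr),
\]
and, using $\mu\le\lambda$, an appropriate choice of $n$ would bound $\length(J)$ in terms of $M$ and $Z_0$.

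The main obstacle is that a priori $Z_0\le(1+x_0)\length(J)$ (from the arc-length relation $\dot z=c+xb$ along an unstable curve inside the strip), so the bound above is circular. I expect the resolution to come from establishing uniform transversality of $\Euf$ to the left-invariant plane field $\mathrm{span}(X,Z)=\ker dy$ on the compact quotient $\HG$: a constant $\delta>0$ such that $|dy(v)|\ge\delta\|v\|$ for every $v\in\Euf$. Granting this, I orient $\Euf$ consistently on the simply-connected cover $\H$, write the unit tangent along an arc-length parametrized unstable curve as $\gamma'(t)=a(t)X+b(t)Y+c(t)Z$ with $b(t)\ge\delta$, and conclude
\[
    |\piu(\gamma(L))-\piu(\gamma(0))|=\int_0^L b(t)\,dt\ge\delta L,
\]
which gives the lemma with $\ell=M/\delta$.

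The hardest step is therefore proving the transversality: if $\Euf(p_0)\subset\ker dy$ at some point, I would track a short unstable arc through $p_0$ under forward iteration, using the $O(\lambda^n)$ bound on $\piu$-spread growth together with Lemma \ref{boxgrow} and Corollary \ref{fuvol} to contradict the expansion $\|Df^n v\|\ge\mu^n\|v\|/\Cph$ sharply, exploiting the inequality $\mu\le\lambda$ already in hand.
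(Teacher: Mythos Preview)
Your volume-versus-box-growth strategy is the right idea, and you correctly isolate the obstacle: for a long unstable arc $J$ the $z$-spread $Z_0$ could be of order $\length(J)$, which makes the estimate circular. Where the proposal breaks down is in the fix. The transversality statement $|dy(v)|\ge\delta\|v\|$ for $v\in\Euf$ would indeed give the lemma immediately (in fact something stronger, quasi-isometry of $\Wu$), but your sketch for proving it does not produce a contradiction. If $\Euf(p_0)\subset\ker dy$ and you take a short arc through $p_0$ with $\piu$-spread $\epsilon$, iterating the one-step bound $|\piu f(p)-\piu f(q)|\le\lambda|\piu(p)-\piu(q)|+2C$ gives $\piu$-spread at step $n$ of order $\lambda^n\epsilon+2C(\lambda^n-1)/(\lambda-1)$, which is $O(\lambda^n)$ no matter how small $\epsilon$ is: the additive constant swamps any initial smallness after a single iterate. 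The resulting volume bound is $O(\lambda^n n)$, and since $\mu\le\lambda$ this does not contradict Corollary~\ref{fuvol}. So the ``hardest step'' as you describe it is genuinely hard---in fact the paper only obtains statements of this flavour (Lemma~\ref{endsunbdd}, and the remark on quasi-isometry following it) \emph{after} Lemma~\ref{curveunbdd} is in hand.

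The paper sidesteps the $Z_0$ problem by a simple normalization trick. Rather than analysing an arbitrarily long $J$, it fixes a unit-length unstable arc $J$, uses a deck transformation to place $U_1(J)$ inside $\BB{x_0}{\infty}{z_0}$ with $z_0$ a uniform constant, and then runs exactly your volume argument on $f^n(J)$: the box bounds give $\vol U_1(f^n(J))\le 2x_0\,(y_n^+-y_n^-+2)\,2(z_0+cn)$, and comparing with Corollary~\ref{fuvol} forces $y_n^+-y_n^->M$ for some $n$ depending only on $M$ (and the fixed constants), not on $J$. The lemma for long curves then follows by pulling back: choose $\ell$ so large that any unstable arc of length $>\ell$ has $f^{-n}$-preimage of length $>1$; a unit-length subarc of that preimage, pushed forward by $f^n$, lands inside the original arc and already has $\piu$-spread $>M$. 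The point you were missing is that uniform control on the $z$-spread is available for \emph{short} arcs, and the dynamics lets you trade ``long'' for ``iterate of short''.
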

\begin{proof}
    We first prove that for an unstable curve $J$ of length exactly one, some
    iterate $f^n(J)$ satisfies the above property, and that $n$ is independent
    of the choice of $J$.

    Let $J$ be an unstable curve of length exactly one.  By applying a deck
    transformation, we assume without loss of generality that $J$ is within
    some fixed distance of the origin and therefore that $U_1(J)$, the
    neighbourhood of radius one of $J$, is contained in $\BB{x_0}{\infty}{z_0}$
    where $x_0$ is as in Lemma \ref{boxgrow} and $z_0$ is independent of $J$.

    Let $y^-_n = \inf_{p\in J} \piu(f^n(p))$ and
    $y^+_n = \sup_{p\in J} \piu(f^n(p))$.  Then, using Lemma \ref{boxgrow},
    \begin{align*}
            U_1(f^n(J))
            \subset \{(x,y,z)\in\H :\ 
                & |x|\le x_0, \\
                & y^-_n-1 \le y \le y^+_n-1,\\
                & |z|\le z_0 + nc \}
    \end{align*}
    and therefore the volume is bounded by
    \[
        2x_0\ (y^+_n-y^-_n + 2)\ 2(z_0+cn).
    \]
    From Corollary \ref{fuvol},
    \[ \vol\ U_1(f^n(J)) \ge C\mu^n \]
    for some constant $C>0$.  Thus, $y^+_n-y^-_n > M$ for large $n$, and
    $f^n(J)$ has points $p,q$ such that
    \[ |\piu(p) - \piu(q)| > M. \]
    Specifically, take $n$ to be the smallest integer such that
    \[ 2x_0\ (M+2)\ 2(z_0+cn) \le C\mu^n. \]
    This choice of $n$ is independent of the unstable curve $J$, so long as it
    has unit length.

    To complete the proof of the lemma as stated above, take $\ell>0$ large
    enough that the length of $f^{-n}(J)$ is greater than one for any unstable
    curve $J$ of length greater than $\ell$.
\end{proof}

\begin{cor} \label{udiam}
    For $\alpha<\lambda$ and an unstable curve $J$, there is $C>0$ such that
    \[\diam\, f^n(J)\, >\, C\alpha^n\]
    for all $n \ge 0$.
\end{cor}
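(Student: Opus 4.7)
The plan is to combine Proposition \ref{yexpand} (exponential separation of the $\piu$-coordinates for point pairs that are already far apart) with Lemma \ref{curveunbdd} (long unstable curves contain pairs with widely separated $\piu$-coordinates). Since the unstable curve $J$ grows exponentially in length under $f$, after finitely many iterates it will be long enough to contain a pair of points to which Proposition \ref{yexpand} applies; from that moment on, the diameters of further iterates grow at rate $\alpha$, which is exactly what we want.

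In more detail, given $\alpha<\lambda$, I would first invoke Proposition \ref{yexpand} to extract $M>0$, and then apply Lemma \ref{curveunbdd} to this $M$ to produce $\ell>0$ such that every unstable curve of length exceeding $\ell$ contains two points whose $\piu$-coordinates differ by more than $M$. Partial hyperbolicity guarantees $\length(f^n(J))\ge \Cph^{-1}\mu^n\length(J)$, and since $\mu>1$ there is a smallest $n_0\ge 0$ with $\length(f^{n_0}(J))>\ell$. Applying Lemma \ref{curveunbdd} to $f^{n_0}(J)$ then produces points $p_0,q_0\in f^{n_0}(J)$ with $|\piu(p_0)-\piu(q_0)|>M$.

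To conclude, note that for every $k\ge 1$ Proposition \ref{yexpand} gives $|\piu f^k(p_0)-\piu f^k(q_0)|>\alpha^k$, and Lemma \ref{distcomp} implies $d(f^k(p_0),f^k(q_0))\ge|\piu f^k(p_0)-\piu f^k(q_0)|$, so $d(f^k(p_0),f^k(q_0))>\alpha^k$. Since $f^k(p_0),f^k(q_0)\in f^{n_0+k}(J)$, writing $n=n_0+k$ yields $\diam f^n(J)>\alpha^{-n_0}\alpha^n$ for every $n>n_0$. The remaining indices $0\le n\le n_0$ form a finite set on which $\diam f^n(J)$ is strictly positive, so shrinking the prefactor $\alpha^{-n_0}$ to a suitably smaller $C>0$ absorbs these cases and delivers the uniform lower bound $\diam f^n(J)>C\alpha^n$ for all $n\ge 0$. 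I do not anticipate a serious obstacle; the only point of care is the bookkeeping of the ``warm-up'' iterates $n_0$ that are needed before Proposition \ref{yexpand} can be invoked, and the trivial adjustment of $C$ to handle the finitely many initial indices.
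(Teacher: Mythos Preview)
Your proposal is correct and follows exactly the approach the paper indicates: the paper's proof consists of the single sentence ``This is a combination of Proposition \ref{yexpand} and Lemma \ref{curveunbdd},'' and your argument spells out precisely how that combination works, including the warm-up iterates and the adjustment of $C$ for the finitely many initial indices.
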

\begin{proof}
    This is a combination of Proposition \ref{yexpand} and Lemma
    \ref{curveunbdd}.
\end{proof}

We are at a point now where we need to use that $f$ is dynamically coherent.
Fortunately, we are also at a point where we can prove it.

\begin{thm}
    $f$ is dynamically coherent.
\end{thm}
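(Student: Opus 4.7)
The plan is to follow the broad approach of Brin--Burago--Ivanov on the $3$-torus and rule out a transverse cycle configuration in $\H$, then deduce dynamical coherence in the standard way.  Concretely, I would show that on the universal cover, for any small disk $D$ transverse to $\Wu$ and any $p\in D$, the unstable leaf $\Wu(p)$ meets $D$ only at $p$.  If this were to fail, one would obtain a short unstable arc $J\subset\Wu(p)$ whose two endpoints lie in a common disk transverse to $\Wu$ but are distinct, yielding a ``transverse contractible cycle'' in the universal cover.  The existence of $f$-invariant branching foliations tangent to $\Ecs$ and $\Ecu$ (a general feature of absolute partial hyperbolicity in dimension three) then promotes the absence of such cycles to unique integrability.

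The contradiction is to come from iterating $J$ forward.  Corollary \ref{udiam} gives $\diam f^n(J)\ge C\alpha^n$ for any $\alpha<\lambda$, and combining Lemma \ref{curveunbdd} with Proposition \ref{yexpand} forces $|\piu f^n(a)-\piu f^n(b)|$ to grow exponentially for some pair $a,b\in J$.  On the other hand, Corollary \ref{xbound} and Lemma \ref{boxgrow} control how forward iterates of bounded regions can spread in $\H$, and the uniform estimate $d(f,\Linear)<C$ together with \eqref{complength} makes it legitimate to track the whole cycle in Euclidean coordinates.  The remaining work is to show that the iterated cycle cannot accommodate the forced exponential $\piu$-spread of $f^n(J)$.

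Running the symmetric argument with $f^{-1}$, which swaps the roles of $\Eu$ with $\Es$ and of $\piu$ with $\pis$, rules out the analogous configuration for $\Wcu$ in place of $\Wcs$.  Unique integrability of $\Ecs$ and $\Ecu$ then follows from the absence of branching, and $\Ec=\Ecs\cap\Ecu$ is automatically uniquely integrable as well, completing dynamical coherence.

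The main obstacle will be quantifying the confinement of the iterated transverse cycle in a way compatible with the Heisenberg geometry.  On the $3$-torus the fundamental group acts by Euclidean translations, so a transverse cycle has bounded diameter in all coordinates and the resulting volume bookkeeping is immediate.  On a Heisenberg nilmanifold the lattice action involves the slanted identification in the $x$-direction, and the $z$-coordinate of iterates grows linearly under $f$ by Lemma \ref{boxgrow}, so the naive toral picture fails.  The bounding-box estimates developed above are designed precisely to handle this asymmetry, so the core technical task is to combine them carefully with the unstable-growth statements to close the contradiction.
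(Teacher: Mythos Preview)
Your proposed contradiction does not close. You want to trap the iterated cycle (the unstable arc $J$ together with a short arc in the transverse disk $D$) inside a bounding box and compare with the exponential $\piu$-spread of $f^n(J)$. But Lemma \ref{boxgrow} only gives $f^n(\BB{x_0}{y}{z})\subset\BB{x_0}{\beta^n y}{z+nc}$ for $\beta>\lambda$, so the iterated cycle is permitted $\piu$-spread of order $\beta^n$. On the other side, Corollary \ref{udiam} and Proposition \ref{yexpand} force the $\piu$-spread of $f^n(J)$ to grow like $\alpha^n$ only for $\alpha<\lambda$. Since $\alpha<\lambda<\beta$, the two rates are compatible and nothing is contradicted. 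The underlying problem is that your disk $D$ is an arbitrary transversal with no relation to the invariant splitting, so its iterates may legitimately expand at the ambient rate $\lambda$; the bounding-box machinery cannot separate it from the unstable growth. (A secondary issue: the step ``absence of such cycles plus branching foliations gives unique integrability'' is neither standard nor explained; the BBI route on $\T^3$ goes through quasi-isometry and Brin's criterion, and the paper explicitly remarks that it bypasses quasi-isometry.)

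The paper's argument supplies the missing idea: the slow side of the comparison must come from the constant $\gamma$ in the definition of partial hyperbolicity, not from the box estimates. If $\Ecs$ were not uniquely integrable, the reachable-set characterization produces a point $p$ and a short unstable arc $J$ every point of which is joined to $p$ by a path tangent to $\Ecs$ of length $<\epsilon$. Iterating, these $cs$-paths grow at rate at most $\gamma^n$, so $\diam f^n(J)\le 2\,\Cph\,\gamma^n\,\epsilon$. Choosing $\alpha$ with $\gamma<\alpha<\mu\le\lambda$, Corollary \ref{udiam} gives $\diam f^n(J)\ge C\alpha^n$, and now the rates genuinely conflict. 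Unique integrability of $\Ecu$ then follows by applying the same argument to $f^{-1}$, and $\Ec=\Ecs\cap\Ecu$ is automatic.
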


\begin{proof}
    For a Riemannian manifold $M$, suppose $E\subset TM$ is a continuous
    distribution of codimension one.  For $p\in M$ and $\epsilon>0$ let
    $R_\epsilon(p)$ denote the set of all points reachable from $p$ by a path
    tangent to $E$ and of length less than $\epsilon$.  If $E$ is uniquely
    integrable, then the set $R_\epsilon(p)$ is a small plaque of the leaf
    through $p$.
    If $E$ is not uniquely integrable, there is a point $p$ such that for any
    $\epsilon>0$, the set $R_\epsilon(p)\subset M$ has non-empty interior.

    In our case, suppose $\Ecs\subset T\H$ is not uniquely integrable.  Then,
    for any $\epsilon>0$ there is a point $p\in\H$ and a small unstable curve
    $J$ such that every point on $J$ is reachable from $p$ by a $cs$-path of
    length less than $\epsilon$.  Using the definition of partial
    hyperbolicity, for $q\in J$
    \[ d\bigl(f^n(p), f^n(q)\bigr) \le \Cph\,\gamma^n\,\epsilon \]
    and therefore
    \[ \diam\,f^n(J)\,\le\,2\,\Cph\,\gamma^n\,\epsilon. \]
    This contradicts Corollary \ref{udiam} if we use a value $\alpha$ such
    that
    \[ \gamma < \alpha < \mu \le \lambda. \]
    Thus $\Ecs$ is uniquely integrable.

    This shows that the center-stable bundle of any partially hyperbolic
    diffeomorphism on the nilmanifold is uniquely integrable.  Since $\Ecu$,
    the center-unstable subbundle of $f$, is equal to the center-stable
    subbundle of $f^{-1}$ (also partially hyperbolic), it is
    uniquely integrable as well, as is $\Ec$, the intersection of $\Ecu$ and
    $\Ecs$.
\end{proof}

\begin{lemma} \label{csbdd}
    Center-stable leaves are bounded in the $\piu$ direction: there is $R>0$
    such that for all $p\in\H$ and $q\in\Wcs(p)$
    \[ |\piu(p) - \piu(q)| < R.\]
\end{lemma}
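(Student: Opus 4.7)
The plan is to pit the exponential expansion in $\piu$ guaranteed by Proposition \ref{yexpand} against the merely $\gamma^n$-growth of distances along center-stable leaves coming from partial hyperbolicity, and to derive a contradiction unless $|\piu(p)-\piu(q)|$ is a priori bounded.

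First I would fix $\alpha$ with $\gamma<\alpha<\lambda$; this is legitimate because the preceding lemma gives $\mu\le\lambda$ and partial hyperbolicity gives $\gamma<\mu$. Let $M$ be the constant supplied by Proposition \ref{yexpand} for this choice of $\alpha$. I expect that $R:=M$ will do the job. To verify this, take any $p\in\H$ and $q\in\Wcs(p)$, and join them by a $C^1$ path $\sigma:[0,1]\to\Wcs(p)$ of finite length $L$. Because every vector tangent to $\Ecs=\Ec\oplus\Es$ decomposes into an $\Ec$ part (which expands by at most $\Cph\gamma^n$) and an $\Es$ part (which contracts), it satisfies $\|Tf^n v\|\le K\gamma^n\|v\|$ for some fixed $K>0$ and every $n\ge 0$. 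Hence $\length(f^n\circ\sigma)\le K\gamma^n L$, so $d(f^n(p),f^n(q))\le K\gamma^n L$, and Lemma \ref{distcomp} gives
\[
  |\piu f^n(p)-\piu f^n(q)|\le K\gamma^n L
\]
for all $n\ge 0$.

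Now suppose for contradiction that $|\piu(p)-\piu(q)|\ge M$. Proposition \ref{yexpand} would then force $|\piu f^n(p)-\piu f^n(q)|>\alpha^n$ for every positive integer $n$, and combining this with the previous inequality yields $(\alpha/\gamma)^n<KL$ for all $n$, which is impossible since $\alpha/\gamma>1$. Therefore $|\piu(p)-\piu(q)|<M$, and $R:=M$ works.

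The only mildly delicate ingredient is the single-constant growth bound $\|Tf^n v\|\le K\gamma^n\|v\|$ for $v\in\Ecs$. This is a routine consequence of absolute partial hyperbolicity together with the uniform transversality of $\Ec$ and $\Es$ inside $\Ecs$ (which gives a lower bound on the angle between them, by continuity of the splitting and compactness of $\HG$), but it is the one place where both the center and stable constants from the definition of partial hyperbolicity must be absorbed into a single estimate.
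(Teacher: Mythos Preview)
Your proof is correct and follows essentially the same route as the paper: fix $\alpha$ with $\gamma<\alpha<\lambda$, take $R=M$ from Proposition~\ref{yexpand}, and contrast the at-most-$\gamma^n$ growth of $\piu$-distance along a center-stable path (via Lemma~\ref{distcomp}) with the at-least-$\alpha^n$ growth forced by Proposition~\ref{yexpand}. The paper's own proof is just a one-line reference to the preceding dynamical-coherence argument, and your write-up spells out exactly those details, including the point about absorbing the $E^c$ and $E^s$ estimates into a single $K\gamma^n$ bound on $E^{cs}$.
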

\begin{proof}
    Set $\alpha$ as in the previous proof and let $R=M>0$ be as in Proposition
    \ref{yexpand}.  Then an analysis of the growth rate of
    $|\piu f^n(p) -\piu f^n(q)|$ similar in form to the last proof gives the
    result.
\end{proof}

\begin{remark}
    The last two proofs crucially rely on $f$ being absolutely partially
    hyperbolic in order to find a constant $\alpha$ between $\gamma$ and $\mu$.
    These are the only places we use absolute partial hyperbolicity instead of
    the more general pointwise definition.
\end{remark}

By Corollary \ref{propembed}, each center-stable leaf is a properly embedded
surface which divides $\H$ into two components.  These components can be
thought of as half-spaces in the following sense.

\begin{lemma} \label{halfspaces}
    Let $R$ be as in Lemma \ref{csbdd}.  For any $\Wcs(p)$, the complement
    $\H\setminus\Wcs(p)$ has connected components $A_p$ and $B_p$
    satisfying the following set inclusions:
    \[ H^-(p)\ \eqdef\ \{ q\in\H : \piu(q) < \piu(p) - R \} \subset A_p \]
    and
    \[ H^+(p)\ \eqdef\ \{ q\in\H : \piu(q) > \piu(p) + R \} \subset B_p. \]
\end{lemma}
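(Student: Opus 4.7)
The plan is to prove the lemma in three steps: (i) $\H \setminus \Wcs(p)$ has exactly two connected components $A_p, B_p$; (ii) each of $H^-(p)$ and $H^+(p)$ is contained in a single component; (iii) they lie in different components.

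For (i), the leaf $\Wcs(p)$ is properly embedded in $\H \cong \R^3$ by Corollary \ref{propembed} and is connected as a foliation leaf. The unstable bundle $\Eu$ is a continuous line field on $\H$ transverse to $\Ecs$, and since $\H$ is simply connected, $\Eu$ is orientable; fixing an orientation of $\Eu$ co-orients $\Wcs(p)$, and Jordan--Brouwer separation for a properly embedded, connected, co-oriented hypersurface of $\R^3$ yields exactly two components. For (ii), Lemma \ref{csbdd} gives $\Wcs(p) \subset \{q : |\piu(q) - \piu(p)| < R\}$, so $H^-(p)$ and $H^+(p)$ are each disjoint from $\Wcs(p)$; each is a convex half-space in Heisenberg coordinates and hence path-connected, so each lies in exactly one of $A_p, B_p$.

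The main obstacle is step (iii). I would use the unstable leaf $\Wu(p)$ through $p$ as a transversal. By Proposition \ref{int-unique}, $\Wu(p) \cap \Wcs(p) = \{p\}$, so $\Wu(p) \setminus \{p\}$ consists of two rays $R_1, R_2$. By the local transversality of $\Wu(p)$ to $\Wcs(p)$ at $p$ combined with the chosen co-orientation, the two rays lie in different components of $\H \setminus \Wcs(p)$; label them so that $R_1 \subset A_p$ and $R_2 \subset B_p$. Each ray has infinite arclength, being half of a complete one-dimensional unstable manifold, so applying Lemma \ref{curveunbdd} to a sufficiently long subsegment of each ray shows that each ray contains points whose $\piu$-values differ by more than $2R$. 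Consequently each ray meets $H^-(p) \cup H^+(p)$.

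A short counting argument then concludes. If both rays met $H^-(p)$, then $H^-(p)$ would intersect both components $A_p$ and $B_p$, contradicting step (ii); similarly both rays cannot meet $H^+(p)$. Writing $a_i, b_i \in \{0,1\}$ for whether $R_i$ meets $H^-(p)$ or $H^+(p)$, we have $a_1 + a_2 \le 1$, $b_1 + b_2 \le 1$, and $a_i + b_i \ge 1$, which forces $\{a_1+b_1, a_2+b_2\} = \{(1,0),(0,1)\}$ in some order. Thus, up to relabeling, $R_1$ meets $H^-(p)$ only and $R_2$ meets $H^+(p)$ only, so $H^-(p) \subset A_p$ and $H^+(p) \subset B_p$, as desired.
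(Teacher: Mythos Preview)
Your proof is correct and follows essentially the same route as the paper: both use Corollary~\ref{propembed} for separation, split $\Wu(p)\setminus\{p\}$ into two rays lying in different components, invoke Lemma~\ref{curveunbdd} to force each ray (hence each component) to have unbounded $\piu$-image, and then argue that the connected sets $H^\pm(p)$ must land in distinct components. The only differences are cosmetic: the paper finishes with a one-line contradiction (if both $H^\pm(p)\subset A$ then $B$ is trapped in the slab $|\piu-\piu(p)|\le R$, contradicting unboundedness of $\piu(B)$) rather than your counting argument, and in your last display you presumably mean $\{(a_1,b_1),(a_2,b_2)\}=\{(1,0),(0,1)\}$ rather than $\{a_1+b_1,a_2+b_2\}$.
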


\begin{figure}[t]
\begin{center}
\includegraphics{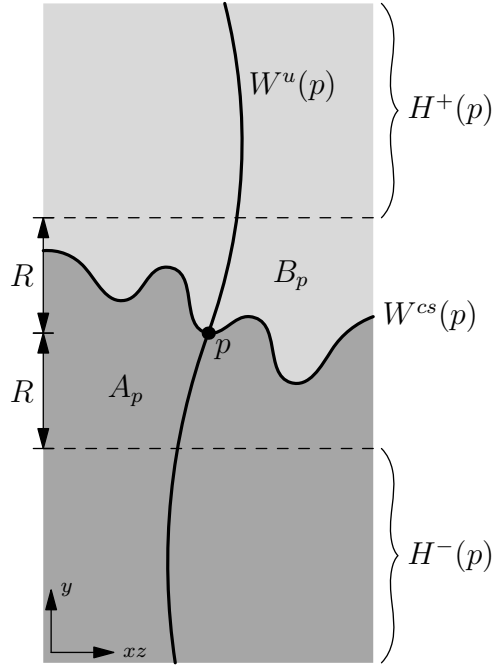}
\end{center}
\caption{A depiction of the half-spaces defined in Lemma \ref{halfspaces}.}
\label{fig:halfspace}
\end{figure}

\begin{proof}
    Let $A$ and $B$ denote the two connected components of
    $\H\setminus\Wcs(p)$.

    By Proposition \ref{int-unique}, $\Wu(p)$ intersects $\Wcs(p)$ only at the
    point $p$ and the intersection is transverse.  Therefore,
    $\Wu(p)\setminus\{p\}$ consists of two infinitely long curves, one of
    which lies entirely in $A$ and the other entirely in $B$.  

    Each curve contains unstable sub-curves of arbitrarily long length, and so
    by Lemma \ref{curveunbdd}, the image of each curve under $\piu$ is
    unbounded.  This shows that both $\piu(A)$ and $\piu(B)$ are unbounded.

    By Lemma \ref{csbdd}, each of $H^-(p)$ and $H^+(p)$ is contained in
    $\H\setminus\Wcs(p)$ and so each is contained wholly in either $A$ or
    $B$.  Suppose both are contained in the same component, say $A$.  Then
    \[
        B \subset \H\setminus\bigl(H^-(p)\cup H^+(p)\bigr)
          = \{q\in\H : |\piu(p) - \piu(q)| \le R\}.
    \]
    This contradicts the fact that $\piu(B)$ is unbounded.  Therefore, either
    $A$ or $B$ contains $H^+(p)$ and the other contains $H^-(p)$.
\end{proof}

\begin{prop} \label{int-exists}
    For $p,q\in\H$, $\Wcs(p)$ and $\Wu(q)$ intersect exactly once.
\end{prop}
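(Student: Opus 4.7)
The plan is to combine the uniqueness already given by Proposition \ref{int-unique} with an existence argument based on connectedness: I will show that $\Wu(q)$ meets both components of $\H\setminus\Wcs(p)$, and hence, being connected, must cross $\Wcs(p)$.

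First I would apply Lemma \ref{halfspaces} to the center-stable leaf $\Wcs(q)$ itself, in order to understand how the two rays of $\Wu(q)\setminus\{q\}$ sit with respect to $\piu$. By Proposition \ref{int-unique} together with transversality, $\Wu(q)$ meets $\Wcs(q)$ only at $q$, so each of its two rays lies entirely in one of the components $A_q$, $B_q$ of $\H\setminus\Wcs(q)$. From Lemma \ref{halfspaces}, $A_q$ is disjoint from $H^+(q)$ and $B_q$ is disjoint from $H^-(q)$; hence every point of $A_q$ satisfies $\piu \le \piu(q)+R$ and every point of $B_q$ satisfies $\piu \ge \piu(q)-R$.

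Each of these two rays contains unstable subarcs of arbitrarily large length, so by Lemma \ref{curveunbdd} its $\piu$-image is unbounded. Because this image is a connected subset of $\R$ (hence an interval) and is simultaneously bounded on one side, the ray in $A_q$ must extend to $\piu\to-\infty$ while the ray in $B_q$ must extend to $\piu\to+\infty$. In particular, $\Wu(q)$ contains points of arbitrarily large positive $\piu$ and points of arbitrarily large negative $\piu$.

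Now I would apply Lemma \ref{halfspaces} a second time, this time to $\Wcs(p)$: the half-spaces $H^-(p)$ and $H^+(p)$ lie in different connected components $A_p$ and $B_p$ of $\H\setminus\Wcs(p)$. By the previous step, one ray of $\Wu(q)\setminus\{q\}$ eventually enters $H^-(p)\subset A_p$ and the other eventually enters $H^+(p)\subset B_p$. Since $\Wu(q)$ is a connected curve meeting both $A_p$ and $B_p$, it must cross $\Wcs(p)$, giving existence of an intersection; uniqueness is Proposition \ref{int-unique}. The main subtlety, which the earlier lemmas have essentially already absorbed, is ensuring that the two rays of $\Wu(q)\setminus\{q\}$ genuinely escape to opposite ends of the $\piu$-axis rather than, say, both escaping to $+\infty$; combining the one-sided bound from Lemma \ref{halfspaces} applied to $\Wcs(q)$ with the unboundedness supplied by Lemma \ref{curveunbdd} is precisely what rules this out.
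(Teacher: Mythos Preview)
Your proposal is correct and follows essentially the same route as the paper: both arguments combine Lemma~\ref{halfspaces} applied to $\Wcs(q)$ (to pin down the two rays of $\Wu(q)\setminus\{q\}$ on opposite sides in $\piu$) with Lemma~\ref{curveunbdd} (to force each ray's $\piu$-image to be unbounded), and then invoke Lemma~\ref{halfspaces} for $\Wcs(p)$ to conclude. The only stylistic difference is that the paper argues by contradiction---assuming $\Wu(q)$ misses $\Wcs(p)$ and trapping one ray $J_2$ in the $\piu$-bounded slab $\H\setminus(H^+(p)\cup H^-(q))$---whereas you argue directly that the two rays escape to $\pm\infty$ in $\piu$ and hence enter both $H^\pm(p)$; these are logically equivalent repackagings of the same idea.
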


\begin{proof}
    Uniqueness of the intersection has already been established in
    Proposition \ref{int-unique}, so we need only prove existence.

    Let $A_p$, $B_p$, $H^+(p)$, and $H^-(p)$ be as in the previous lemma.
    Adopt similar notation for the point $q$.
    Assume $\Wu(q)$ does not intersect $\Wcs(p)$.  Then it lies in one of the
    connected components of the complement, and either
    \[  \Wu(q) \subset A_p \subset \H\setminus H^+(p)
        \quad\text{or}\quad
        \Wu(q) \subset B_p \subset \H\setminus H^-(p) \]
    \begin{figure}[t]
    \begin{center}
    \includegraphics{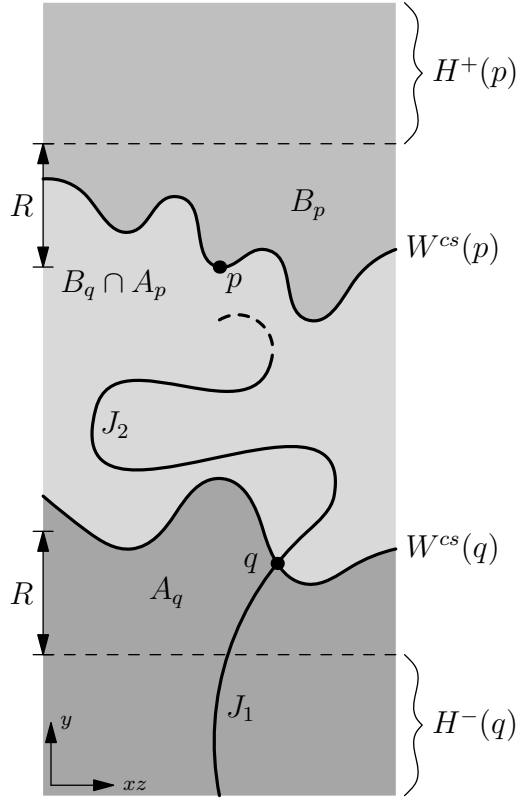}
    \end{center}
    \caption{The unstable leaf though $q$ must intersect the center-stable
    leaf through $p$.  Otherwise, the arc $J_2$ would be bounded in the
    $y$-direction.}
    \label{fig:boundcurve}
    \end{figure}

    Assume, without loss of generality, that the first inclusion holds, as
    depicted in Figure \ref{fig:boundcurve}.

    As in the proof of the last lemma, $\Wu(q)\setminus\{q\}$ consists of
    two unbounded curves lying in the two connected components of
    $\H\setminus\Wcs(q)$.  Label these curves as $J_1$ and $J_2$ where
    \[  J_1 \subset A_q \subset \H\setminus H^+(q)
        \quad\text{and}\quad
        J_2 \subset B_q \subset \H\setminus H^-(q). \]
    Then,
    \begin{align*}
        J_2 &\subset \H\setminus \bigl(H^+(p) \cup H^-(q)\bigr) \\
            &\subset \{s\in\H : \piu(q)-R\le \piu(s)\le\piu(p)+R \},
    \end{align*}
    but, as $J_2$ is an infinitely long unstable curve, $\piu(J_2)$ is
    unbounded, a contradiction.

\end{proof}

This proves the first of the four axioms required to establish \GPS\ (Theorem
\ref{GPSthm}).  To prove the second axiom, consider Proposition
\ref{int-exists} when applied to $f^{-1}$ in place of $f$.
To establish the last two axioms, we must show that on a $cu$-leaf each center
leaf intersects each unstable leaf exactly once.  To do this, repeat the
preceding steps of this section, but restricted to a single center-unstable
leaf.  Specifically, if $\LL\subset\H$ is a center-unstable leaf, show:
\begin{enumerate}
    \item If $p\in\LL$ then $\Wc(p)$ is properly embedded in $\LL$, akin to
    Corollary \ref{propembed}.
    \item If $p\in\LL$ then $\LL\setminus\Wc(p)$ has connected components $A$
    and $B$ such that
    \begin{align*}
        \{q\in\LL : \piu(q) < \piu(p) - R \} &\subset A \quad\text{and} \\
        \{q\in\LL : \piu(q) > \piu(p) + R \} &\subset B
    \end{align*}
    akin to Lemma \ref{halfspaces}.
    \item If $p,q\in\LL$ then $\Wc(p)$ and $\Wu(q)$ intersect exactly once,
    akin to Lemma \ref{int-exists}.
\end{enumerate}

With \GPS\ established, we proceed to prove the \CL\ (Lemma \ref{CLemma}).

\begin{lemma} \label{endsunbdd}
    For $M>0$ there is $\ell>0$ such that for any unstable curve $J$ of
    length greater than $\ell$, the endpoints $p$ and $q$ satisfy
    \[ |\piu(p) - \piu(q)| > M. \]
\end{lemma}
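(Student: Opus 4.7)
The plan is to prove the contrapositive: if $|\piu(p)-\piu(q)|\le M$ for the endpoints of $J$, then $\text{length}(J)\le\ell(M)$ for some $\ell$ depending only on $M$. I would derive this by confronting two estimates on the forward iterates $f^n(J)$, reducing the problem to the already-established machinery of Lemma~\ref{curveunbdd}, Corollary~\ref{fuvol}, and Proposition~\ref{yexpand}.

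The upper estimate controls the endpoint $\piu$-separation. Iterating the single-step inequality
\[
|\piu f(p)-\piu f(q)|\le\lambda\,|\piu(p)-\piu(q)|+2C,
\]
which is a direct consequence of $\piu\circ\Linear=\lambda\piu$, Proposition~\ref{unifbdd}, and Lemma~\ref{distcomp}, yields $|\piu f^n(p)-\piu f^n(q)|\le C_1\lambda^n$ where $C_1$ depends on $M$ but not on $n$. The lower estimate controls the $\piu$-diameter of $f^n(J)$: the volume-neighbourhood bookkeeping from the proof of Lemma~\ref{curveunbdd}, used with Corollary~\ref{fuvol} in its length-proportional form, forces $y_n^+-y_n^-\ge C_2\,\mu^n\,\text{length}(J)/n$ for a constant $C_2$ and large $n$.

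Once $\text{length}(J)$ is large enough relative to $n$, the lower estimate exceeds the upper, which means the $\piu$-extrema of $f^n(J)$ must be realised at interior points $f^n(a)$ whose $\piu$-values lie far from both iterated endpoints. Applying the upper estimate to the pair $(p,a)$ then forces the initial separation $|\piu(p)-\piu(a)|$ to grow at least like $(\mu/\lambda)^n\,\text{length}(J)/n$. Combining this with Lemma~\ref{curveunbdd} applied to the sub-arc of $J$ from $p$ to $a$ (or with Proposition~\ref{yexpand} run on the pair $(p,a)$) converts the growth of $|\piu(p)-\piu(a)|$ into a bound on $\text{length}(J)$ in terms of $M$ alone, contradicting the assumption that $\text{length}(J)$ is arbitrarily large.

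The main obstacle is the critical case $\mu=\lambda$, where the two exponential rates coincide and only the polynomial factor $n$ separates them --- the same polynomial-versus-exponential tension exploited in Proposition~\ref{phmat}. To tip the balance I would choose $n\sim\log\text{length}(J)$ and take $\alpha<\lambda$ in Proposition~\ref{yexpand} close enough to $\lambda$, calibrated so that the two estimates actually collide; the resulting $\ell(M)$ then comes out explicitly, although the constant will inevitably involve the Lipschitz data of $f$ through $C$ and $C_{\textrm{ph}}$.
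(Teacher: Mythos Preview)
Your growth-rate strategy has a real gap, and it cannot be closed by tuning $n$ or $\alpha$.  Two problems compound.  First, the ``length-proportional'' volume bookkeeping does not give the lower bound you claim: if you place an unstable curve $J$ of length $\ell$ (after a deck transformation) inside $\BB{x_0}{\infty}{\infty}$, the initial $z$-extent $z_0$ of the bounding box is of order $\ell$, so the inequality coming from Corollary~\ref{fuvol} reads
\[
  (y_n^+-y_n^-)\,(K\ell+nc)\ \ge\ C\mu^n\ell,
\]
and for fixed $n$ the right-hand side divided by $K\ell+nc$ is \emph{bounded} as $\ell\to\infty$, not proportional to $\ell$.  Since the paper proves $\mu\le\lambda$, your upper estimate on the iterated endpoint separation is $\lambda^n(M+C')$, and for large $M$ the two estimates never collide, no matter how you choose $n$.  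Second, even granting an interior point $a$ with $|\piu(p)-\piu(a)|$ large, the closing step is circular: Lemma~\ref{curveunbdd} and Proposition~\ref{yexpand} convert large $\piu$-separation into further large $\piu$-separation under iteration, never into an upper bound on $\operatorname{length}(J)$.  You have re-derived the conclusion of Lemma~\ref{curveunbdd} (some pair on $J$ is far in $\piu$), which is the hypothesis you started from, not the endpoint statement you want.

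The paper's proof uses a completely different mechanism that your outline never invokes: the \emph{topological} fact that an unstable curve meets any center-stable leaf at most once (Proposition~\ref{int-unique}), together with the bound on cs-leaves from Lemma~\ref{csbdd}/Lemma~\ref{halfspaces}.  From Lemma~\ref{curveunbdd} one first finds interior points $\alpha(s),\alpha(t)$ with $\piu\alpha(t)-\piu\alpha(s)>M+2R$.  The sub-arc $\alpha((t,1])$ lies entirely on one side of $\Wcs(\alpha(t))$ --- the side \emph{not} containing $\alpha(s)$ --- and hence in $\H\setminus H^-(\alpha(t))$; symmetrically $\alpha([0,s))\subset\H\setminus H^+(\alpha(s))$.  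The endpoints are therefore trapped: $\piu\alpha(1)>\piu\alpha(t)-R$ and $\piu\alpha(0)<\piu\alpha(s)+R$, and the triangle inequality gives $\piu\alpha(1)-\piu\alpha(0)>M$.  The point is that the cs-foliation prevents the unstable curve from ``coming back'' in $\piu$; no growth-rate argument can see this monotonicity, which is why your approach stalls.
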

\begin{remark}
    This claim is stronger than Lemma \ref{curveunbdd} because it concerns the
    endpoints specifically, instead of two points somewhere along the curve.
\end{remark}
\begin{proof}
    Let $\alpha:[0,1]\to\H$ be a parametrization of a sufficiently long
    unstable curve.  Then, by Lemma \ref{curveunbdd}, there are $s,t\in[0,1]$ such
    that
    \[ \piu \alpha(t) - \piu \alpha(s) > M + 2R \]
    where $M$ is given in the hypothesis of this lemma, and $R$ is as in
    Lemma \ref{csbdd}.  Without loss of generality, $0<s<t<1$.

    The sub-curve $\alpha((t,1])$ must lie in one connected component of
    $\H\setminus\Wcs(\alpha(t))$ while $\alpha(s)$ lies in the other.
    As such, $\alpha(s)\in H^-(\alpha(t))$ where $H^-$ is as defined in Lemma
    \ref{halfspaces} and therefore $\alpha((t,1]) \subset \H\setminus
    H^-(\alpha(t)).$ Similarly, $\alpha([0,s)) \subset \H\setminus
    H^+(\alpha(s)).$
    Then,
    \[ \piu \alpha(1) - \piu \alpha(0) > M \]
    due to the definitions of $H^+$ and $H^-$ and the triangle inequality.

    \begin{figure}[t]
    \begin{center}
    \includegraphics{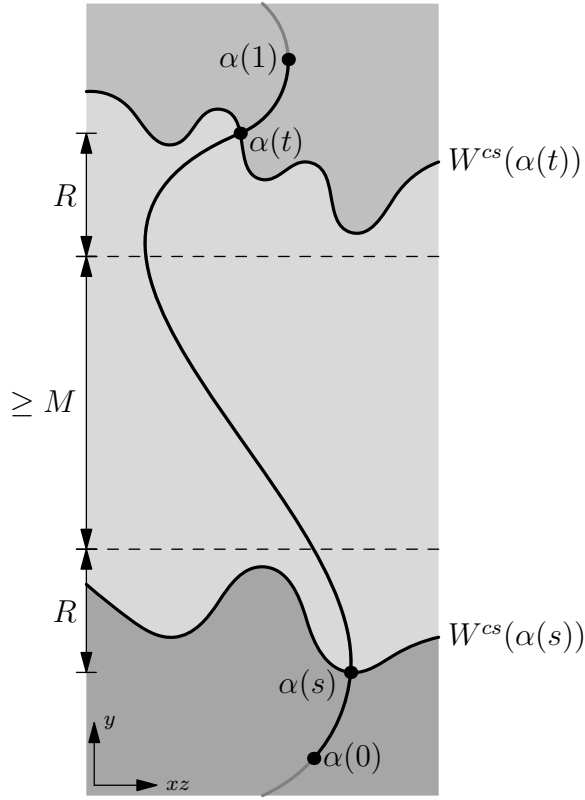}
    \end{center}
    \caption{The curve $\alpha$ considered in the proof of Lemma
    \ref{endsunbdd}.}
    \label{fig:longends}
    \end{figure}

\end{proof}

\begin{remark}
    With a little more work, one could find constants $a,b>0$ such that
    \[  |\piu(p) - \piu(q)| > a\cdot\ell + b \]
    for any unstable curve of length $\ell$ and endpoints $p$ and $q$.  Then,
    by Lemma \ref{distcomp},
    \[  d(p,q) > a\cdot\ell + b \]
    which is precisely what it means for the unstable foliation to be {\em
    quasi-isometric}.  Due to Brin, quasi-isometry of the stable and unstable
    foliations is a sufficient condition for a system to be dynamically
    coherent \cite{Brin}.  Brin, Burago, and Ivanov used this property to show
    that all partially hyperbolic systems on the 3-torus were dynamically
    coherent \cite{BBI2}, and Parwani extended this to all 3-nilmanifolds
    \cite{Parwani}.

    In contrast, the proof of dynamical coherence given in this paper does not
    use quasi-isometry, and may be adaptable to systems where quasi-isometry
    is not known to hold.

    Lemma \ref{endsunbdd} is sufficient for the final goal of this section,
    proving the \CL, and so we do not further pursue the
    idea of quasi-isometry.
\end{remark}

We first prove a baby version of the \CL\ for center-stable leaves.

\begin{lemma} \label{babyshadow}
    For $p,q\in\H$, the following are equivalent:
    \begin{itemize}
        \item $p\in\Wcs(q)$.
        \item $|\piu f^n(p) - \piu f^n(q)|$ is bounded for all $n\in\Z$.
    \end{itemize}
\end{lemma}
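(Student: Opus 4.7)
The forward implication is the routine half. If $p\in\Wcs(q)$, then by $f$-invariance $f^n(p)\in\Wcs(f^n(q))$ for every $n\in\Z$, so Lemma \ref{csbdd} gives $|\piu f^n(p)-\piu f^n(q)|<R$ uniformly. So the content of the lemma is the converse, which I plan to prove by contrapositive: assuming $p\notin\Wcs(q)$, exhibit a sequence of iterates along which $|\piu f^n(p)-\piu f^n(q)|\to\infty$.

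Suppose $p\notin\Wcs(q)$. By \GPS\ (Theorem \ref{GPSthm}, just established), $\Wu(p)$ meets $\Wcs(q)$ in exactly one point $r$, and by hypothesis $r\ne p$. Thus there is an unstable arc $J\subset\Wu(p)$ of positive length joining $p$ to $r$. For each $n\ge 0$, $f^n(J)$ is an unstable arc with endpoints $f^n(p)$ and $f^n(r)$, and by the definition of partial hyperbolicity its length is at least $\Cph^{-1}\mu^n\length(J)$, which tends to infinity.

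Applying Lemma \ref{endsunbdd} to $f^n(J)$, for every $M>0$ there exists $n_0$ so that $|\piu f^n(p)-\piu f^n(r)|>M$ whenever $n\ge n_0$. On the other hand, $f^n(r)\in\Wcs(f^n(q))$ for all $n$, so Lemma \ref{csbdd} gives $|\piu f^n(r)-\piu f^n(q)|<R$. Combining these two estimates via the triangle inequality,
\[
    |\piu f^n(p)-\piu f^n(q)|\;\ge\;|\piu f^n(p)-\piu f^n(r)|-|\piu f^n(r)-\piu f^n(q)|\;>\;M-R,
\]
so this quantity is unbounded as $n\to+\infty$, contradicting the hypothesis.

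I do not expect a serious obstacle here; the heavy lifting has already been done by \GPS\ (which produces the canonical witness $r$), by the exponential growth of unstable arc length, and by Lemmas \ref{csbdd} and \ref{endsunbdd} (which translate length and leaf-membership statements into controlled behavior of $\piu$). The only subtlety is to remember that the intersection point $r$ must be distinct from $p$ in order that $J$ be nontrivial, which is exactly what $p\notin\Wcs(q)$ guarantees through the uniqueness clause of \GPS.
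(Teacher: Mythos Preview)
Your proof is correct and follows essentially the same approach as the paper: use \GPS\ to find an intersection point $r$, apply Lemma \ref{csbdd} along the center-stable leaf and Lemma \ref{endsunbdd} along the unstable arc, and finish with the triangle inequality. The only cosmetic difference is that the paper takes $r\in\Wcs(p)\cap\Wu(q)$ (rather than your $r\in\Wu(p)\cap\Wcs(q)$) and packages the argument as a single chain of equivalences instead of splitting into forward and contrapositive directions.
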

\begin{proof}
    By the \GPS, there is a unique point $r\in\H$ such that
    $r\in\Wcs(p)$ and $r\in\Wu(q)$.  By Lemma \ref{csbdd},
    $|\piu f^n(p) - \piu f^n(r)|$ is bounded for all $n\in\Z$.
    Then, (using Lemma \ref{endsunbdd} for the second ``$\Leftrightarrow$'')
    \begin{align*}
        p\in\Wcs(q)& \spacedblarrow \\
        r = q& \spacedblarrow \\
        |\piu f^n(r) - \piu f^n(q)|& \quad\text{is bounded for all $n\in\Z$}
               \spacedblarrow \\
        |\piu f^n(p) - \piu f^n(q)|& \quad\text{is bounded for all $n\in\Z$.}
    \end{align*}
\end{proof}

By considering $f^{-1}$, we prove a similar statement for the
center-unstable leaves, but because our coordinate system is adapted to $f$
and not to $f^{-1}$, the proof requires some care.

\begin{lemma}
    For $p,q\in\H$, the following are equivalent:
    \begin{itemize}
        \item $p\in\Wcu(q)$.
        \item $|\pis f^n(p) - \pis f^n(q)|$ is bounded for all $n\in\Z$.
    \end{itemize}
\end{lemma}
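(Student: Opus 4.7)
The plan is to deduce this statement from Lemma \ref{babyshadow} applied to $g = \Psi f^{-1}\Psi^{-1}$, where $\Psi$ is a Heisenberg automorphism that swaps the $x$- and $y$-axes. A direct application to $f^{-1}$ would not work: the algebraic part of $f^{-1}$ has matrix $\operatorname{diag}(\lambda,\lambda^{-1},1)$, so its unstable direction is aligned with the $x$-axis rather than the $y$-axis, meaning Lemma \ref{babyshadow} would produce boundedness of $\piu$-differences instead of the desired $\pis$-differences.

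To interchange the $x$- and $y$-directions at the algebraic level, we note that the naive permutation $X\leftrightarrow Y$ is not a Lie algebra automorphism of $\h$: the identity $[Y,X]=-Z$ forces any such permutation to send $Z\mapsto -Z$ as well. The resulting matrix
\[
    \sigma = \begin{pmatrix} 0 & 1 & 0 \\ 1 & 0 & 0 \\ 0 & 0 & -1 \end{pmatrix}
\]
does have the form of \eqref{algauto}, with $A=\begin{pmatrix}0&1\\1&0\end{pmatrix}$ and $\det A=-1$, so it lies in $G$. Let $\Psi:\H\to\H$ be the corresponding Lie group automorphism; its first two coordinate functions are $(x,y,z)\mapsto(y,x)$, so $\piu\circ\Psi=\pis$. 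The matrix associated to $g=\Psi f^{-1}\Psi^{-1}$ is $\sigma\cdot\operatorname{diag}(\lambda,\lambda^{-1},1)\cdot\sigma^{-1}=\operatorname{diag}(\lambda^{-1},\lambda,1)$, which is in the standard form of \eqref{diaglambda}. Hence $g$ satisfies all the running hypotheses of Section \ref{bbox}, and Lemma \ref{babyshadow} applies to it.

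Substituting $\Psi(p)$ and $\Psi(q)$ into Lemma \ref{babyshadow} for $g$, and using the three identifications $g^n=\Psi f^{-n}\Psi^{-1}$, $\piu\circ\Psi=\pis$, and $\Wcsg=\Psi(\Wcuf)$ at the level of foliations (since $\Psi$ intertwines the partially hyperbolic splittings, and the stable and unstable directions swap under $f\mapsto f^{-1}$), one obtains
\[
    p\in\Wcu(q) \quad\Leftrightarrow\quad \bigabs{\pis f^{-n}(p)-\pis f^{-n}(q)} \text{ is bounded for all } n\in\Z.
\]
Since the condition on the right is symmetric under $n\mapsto -n$, this is equivalent to the desired statement. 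The only subtle step, as anticipated in the remark above the lemma, is the construction of $\Psi$: the Heisenberg bracket forces the sign flip on $Z$, without which the coordinate swap would fail to be a Lie group automorphism and the whole translation scheme would break down.
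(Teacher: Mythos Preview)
Your proof is correct and follows essentially the same strategy as the paper: conjugate $f^{-1}$ by a Heisenberg automorphism that exchanges the $x$- and $y$-directions so that Lemma~\ref{babyshadow} applies to $g=\Psi f^{-1}\Psi^{-1}$, then translate back via $\piu\circ\Psi=\pis$ and $\Wcsg=\Psi(\Wcuf)$. The only difference is cosmetic: the paper uses the rotation $\Phi(x,y,z)=(-y,x,z-xy)$ with $\det A=+1$ (so $Z\mapsto Z$) rather than your reflection with $\det A=-1$, so the sign flip on $Z$ that you describe as forced is in fact avoidable---but either choice works and the argument is unchanged.
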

\begin{proof}
    The standing assumption of this section is that $f:\H\to\H$ is a partially
    hyperbolic diffeomorphism with the associated matrix given in
    \eqref{diaglambda}.
    Consider the Lie group automorphism
    \[\Phi:\H\to\H, \quad (x,y,z)\mapsto(-y,x,z-xy)\]
    with associated matrix
    \[  \begin{pmatrix}
            0 & -1 & 0 \\ 1 & 0 & 0 \\ 0 & 0 & 1
        \end{pmatrix}. \]
    Further consider $\Phi f^{-1}\Phi^{-1}$, the conjugation of the
    inverse of $f$ by $\Phi$.  This also has the associated matrix
    \eqref{diaglambda}
    and so Lemma \ref{babyshadow} applies just as well to
    $\Phi f^{-1}\Phi^{-1}$
    as it does to $f$.

    If $\LL$ is a $cu$-leaf of $f$, then $\Phi(\LL)$ is a $cs$-leaf of 
    $\Phi f^{-1}\Phi^{-1}$
    and, therefore, the following are equivalent:
    \begin{itemize}
        \item $p$ and $q$ lie on the same $cu$-leaf of $f$.
        \item $\Phi(p)$ and $\Phi(q)$ lie on the same $cs$-leaf of
        $\Phi f^{-1}\Phi^{-1}$.
        \item $|\piu\Phi f^{-n}(p) - \piu\Phi f^{-n}(q)|$
        is bounded for all $n\in\Z$.
    \end{itemize}
    The definition of $\Phi$ combined with $\pis(x,y,z)=x$ and
    $\piu(x,y,z)=y$
    shows that $\piu\Phi=\pis$,
    implying that the last item above is equivalent to
    \begin{itemize}
        \item $|\pis f^{-n}(p) - \pis f^{-n}(q)|$
        is bounded for all $n\in\Z$
    \end{itemize}
    and the lemma is proved.
\end{proof}

Let $P:\H\to\R^2$ be the projection $(x,y,z)\mapsto(x,y)$.  Due to the Global
Product Structure, two points lie on the same center leaf if and only if they
lie on the same $cu$-leaf and the same $cs$-leaf.  As such, the last two
lemmas combine to show the following are equivalent:
\begin{itemize}
    \item $p\in\Wc(q)$.
    \item $\|Pf^n(p) - Pf^n(q)\|$ is bounded for all $n\in\Z$.
\end{itemize}
This is precisely the \CL (Lemma \ref{CLemma}).

\section{The Leaf Conjugacy} \label{leafconj} %{{{1
In the previous section, we viewed the partially hyperbolic system in such a
way that it could be closely compared to a Lie group automorphism of the form
$(x,y,z)\mapsto(\lambda^{-1}x,\lambda y, z)$.  This was achieved by a change
of coordinates that put the lattice $\Gamma$ defining the nilmanifold $\HG$
into an unknown state.  Since the proofs of the previous section did not
involve the lattice, the change was benign.  In this section, however, to
construct a leaf conjugacy on $\HG$, it is important that $\Gamma$ be as
simple as possible.

Any lattice $\Gamma\subset\H$ defining a nilmanifold $\HG$ is of the form
$\Gamma=\langle a,b,c\rangle$ where $[a,b]=aba^{-1}b^{-1}=c^k$ for some
positive integer $k$.  In fact, $k$ is the index of $[\Gamma,\Gamma]$ as a
subgroup of $[\H,\H]\cap\Gamma$ and depends only on $\Gamma$ and not the
choice of generators.  If $\Gamma'=\langle a',b',c'\rangle$ has the same index
$k$, there is a unique Lie group automorphism $\Phi:\H\to\H$ mapping $a$, $b$,
$c$ to $a'$, $b'$, $c'$ respectively.  This is most easily seen by
pulling the lattices back by the surjective exponential map $\exp:\h\mapsto\H$
and first defining the automorphism on the Lie algebra.

Once defined, the Lie group automorphism descends to a nilmanifold
isomorphism $\HG\to\H/\Gamma'$ which is a smooth diffeomorphism.
Therefore, assume without loss of generality that the partially hyperbolic
system under study is defined on a nilmanifold $\HG$ where the lattice is
given by
\[
    \Gamma = \bigl\langle\ (1,0,0),\ (0,1,0),\ (0,0,\tfrac{1}{k})\ \bigr\rangle
\]
for some positive integer $k$.

Recall that there is a group isomorphism $f_*:\Gamma\to\Gamma$ such that
\[
    f(\gamma\cdot p) = f_*(\gamma)\cdot f(p)
\]
for $\gamma\in\Gamma$ and $p\in\H$. This restricts to an isomorphism on the
(group-theoretic) center $Z(\Gamma)=\{(0,0,\tfrac{i}{k}):i\in\Z\}$,
which implies that 
\[
    f(\gamma\cdot p) = \pm\gamma\cdot f(p)
\]
for all $\gamma\in Z(\Gamma)$.

\begin{prop} \label{cmptleaf}
    For all $p\in\H$, the point $(0,0,\tfrac{1}{k})\cdot p$ lies on the same
    center leaf as $p$.
\end{prop}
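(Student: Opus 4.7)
The plan is to invoke the Central Shadowing Lemma (Lemma \ref{CLemma}), which characterizes membership in the center leaf $\Wc(p)$ by the boundedness of $\|Pf^n(p)-Pf^n(q)\|$ for all $n\in\Z$, where $P(x,y,z)=(x,y)$. Setting $\gamma=(0,0,\tfrac{1}{k})$ and $q=\gamma\cdot p$, it therefore suffices to show that $Pf^n(q)=Pf^n(p)$ for every $n\in\Z$, so that the separation is identically zero and hence trivially bounded.

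The key observation is that the defining equivariance relation $f(\gamma\cdot p)=f_*(\gamma)\cdot f(p)$ iterates to $f^n(\gamma\cdot p)=f_*^n(\gamma)\cdot f^n(p)$ for every integer $n$. By the preamble of this section, $f_*$ restricts to an isomorphism of $Z(\Gamma)\cong\Z$, and hence acts on the generator $\gamma$ as multiplication by $\pm 1$. Consequently $f_*^n(\gamma)=(\pm 1)^n\gamma\in Z(\H)$ for all $n\in\Z$. A direct computation using the group law shows that left-multiplication by any element of $Z(\H)=\{(0,0,z):z\in\R\}$ alters only the third coordinate and leaves $(x,y)$ fixed, so $P$ is invariant under it. Applying this to the factor $f_*^n(\gamma)$ standing on the left of $f^n(p)$ yields $Pf^n(q)=Pf^n(p)$ as desired.

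There is no real obstacle here: once the Central Shadowing Lemma is in hand, the proposition reduces to a one-line algebraic identity. The conceptual content is simply that the group-theoretic center $Z(\H)$ of the Heisenberg group coincides with the kernel of the projection $P$, so the central part of the lattice $\Gamma$ is invisible to the coordinates in which the shadowing criterion is phrased, and the $\pm 1$ action of $f_*$ on $Z(\Gamma)$ keeps this invariance preserved under all iterates of $f$.
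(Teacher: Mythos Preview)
Your argument is correct and follows the same route as the paper: iterate the equivariance relation to get $f^n(\gamma\cdot p)=\pm\gamma\cdot f^n(p)$, then invoke the \CL. The only cosmetic difference is that the paper bounds the left-invariant distance $d(\pm\gamma\cdot f^n(p),f^n(p))$ (which is constant by left-invariance) and implicitly passes to $\|P(\cdot)-P(\cdot)\|$ via Lemma~\ref{distcomp}, whereas you observe more directly that $Z(\H)=\ker P$, so the $P$-separation is identically zero.
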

\begin{proof}
    Let $\gamma=(0,0,\tfrac{1}{k})$.
    For $p\in\H$, $f(\gamma\cdot p)=\pm\gamma\cdot f(p)$ and by
    induction, $f^n(\gamma\cdot p) = \pm\gamma\cdot f^n(p)$ for all $n\in\Z$.
    The distance $d(\pm\gamma\cdot f^n(p), f^n(p))$ is bounded independently
    of $n$, and therefore, by the \CL, $p$ and $\gamma\cdot p$ lie on the same
    center leaf.
\end{proof}

Multiplication by $(0,0,\tfrac{1}{k})$ is given by
\[
    (0,0,\tfrac{1}{k})\cdot(x,y,z) = (x,y,z+\tfrac{1}{k}).
\]
This shows that the center foliation of $f$ is more-or-less vertical, and when
we quotient down to the nilmanifold $\HG$, all of the center leaves are
circles.

Consider the projection $P:\H\to\R^2,\ (x,y,z)\mapsto(x,y)$.  As
$P(\Gamma)=\Z^2$, the projection quotients down to a map from the nilmanifold
$\HG$ to the torus $\R^2/\Z^2=\T^2$. This map, by abuse of notation, is also
called $P$.

The group isomorphism $f_*$ on $\Gamma$
quotients down to an isomorphism of $\Gamma/Z(\Gamma)\cong \Z^2$.
This new isomorphism
$A:\Z^2\to\Z^2$ can be regarded as a $2\times 2$ matrix.  Further, the
diagram
\[
    \commdiag {\Gamma} {f_*} {\Gamma}
              {P}            {P}
              {\Z^2}   {A}   {\Z^2}
\]
commutes.  In fact, the matrix $A$ is equal to the $2\times 2$ matrix, also
called $A$, studied in Section \ref{nilmanifolds}.  By Proposition
\ref{phmat}, we know this matrix is hyperbolic.  It gives a hyperbolic toral
automorphism $A:\T^2\to\T^2$ and the above diagram can be re-written as
\[
    \commdiag {\pi_1(\HG)}  {f_*} {\pi_1(\HG)}
              {P_*}               {P_*}
              {\pi_1(\T^2)} {A}   {\pi_1(\T^2)}
\]
By the results of J.~Franks \cite{Franks1}, there is a
semi-conjugacy $H:\HG\to\T^2$ such that $H$ and $P$ induce the same action on
the fundamental group,
and such that the diagram
\[
    \commdiag {\HG}  {f_0} {\HG}
              {H}          {H}
              {\T^2} {A}   {\T^2}
\]
commutes.  As stated in \cite{Franks1}, $f_0$ must have a fixed point $x_0$ in
order to find a unique semi-conjugacy such that $H(x_0) = 0$.  If, as is the
case here, uniqueness is not required, we may adapt the proof of Franks to
find a semi-conjugacy $H$, even though $f_0$ does not necessarily have a fixed
point.

Lift $H$ to a map $\H\to\R^2$ which, by more abuse of notation,
will also be called $H$.  After conjugating $f$ with some multiplication, 
assume without loss of generality that $H(0,0,0)=(0,0)$.  Then
$H|_\Gamma=P|_\Gamma$ by the definition of $H$.

Let $g:\H\to\H$ be the algebraic part of $f_0$ as defined in Section
\ref{nilmanifolds}.  $g$ quotients down to $g_0:\HG\to\HG$ and by Proposition
\ref{phmat}, both $g$ and $g_0$ are partially hyperbolic diffeomorphisms.
Moreover, the diagram
\[
    \commdiag {\HG}  {g_0} {\HG}
              {P}          {P}
              {\T^2} {A}   {\T^2}
\]
commutes.  The center bundle of $g$ is given by the vector field
$Z=\deldel{z}$ and the center leaves are vertical lines.  These
quotient down to compact center leaves on the nilmanifold.

Our goal is to produce a leaf conjugacy between $f_0$ and $g_0$, a
homeomorphism $h_0:\HG\to\HG$ which takes center leaves of $g_0$ to center
leaves of $f_0$ and such that
\[ h_0g_0(\LL) = f_0h_0(\LL) \]
for every center leaf $\LL$ of $g_0$.  We achieve this by constructing a leaf
conjugacy $h:\H\to\H$ between $f$ and $g$ on the universal cover such that
\[ h(\gamma\cdot p) = \gamma\cdot h(p) \]
for all $\gamma\in\Gamma$ and $p\in\H$.  $h_0$ is then the quotient of $h$.

The first step is to better understand the semi-conjugacy $H$.

\begin{prop}
    The fibers of $H$ are the center leaves of $f$.  That is, for $p,q\in\H$,
    $H(p)=H(q)$ if and only if $p\in\Wc(q)$.
\end{prop}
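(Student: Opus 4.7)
The plan is to reduce the statement to the \CL\ (Lemma \ref{CLemma}) by comparing the semi-conjugacy $H$ with the projection $P$, and then exploiting that the matrix $A$ is hyperbolic.

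First I would establish that $H$ and $P$ differ by a bounded amount on the universal cover. Since $H$ is the lift of a continuous map $\HG\to\T^2$ that induces the same action on $\pi_1$ as $P$, the equivariance relations
\[ H(\gamma\cdot p) = H(p) + P_*(\gamma) \quad\text{and}\quad P(\gamma\cdot p) = P(p) + P_*(\gamma) \]
for $\gamma\in\Gamma$, $p\in\H$, show that $H-P:\H\to\R^2$ is $\Gamma$-invariant. Hence it descends to a continuous map on the compact nilmanifold $\HG$, and there is $K>0$ with $\|H(p)-P(p)\|\le K$ for all $p\in\H$.

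Next I would use the semi-conjugacy $Hf = AH$ (which iterates to $Hf^n = A^nH$) to translate boundedness of the $P$-orbits into boundedness of $A$-orbits of $H(p)-H(q)$. Subtracting and applying the triangle inequality with the constant $K$,
\[ \bigl\| A^n\bigl(H(p)-H(q)\bigr) - \bigl(Pf^n(p)-Pf^n(q)\bigr) \bigr\| \le 2K \]
for every $n\in\Z$ and every $p,q\in\H$. Therefore $\|A^n(H(p)-H(q))\|$ is bounded in $n$ if and only if $\|Pf^n(p)-Pf^n(q)\|$ is bounded in $n$.

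Finally I would combine this with two facts. By the \CL, boundedness of $\|Pf^n(p)-Pf^n(q)\|$ is equivalent to $p\in\Wc(q)$. On the other hand, $A$ is a hyperbolic $2\times2$ matrix, so the only vector $v\in\R^2$ whose full two-sided orbit $\{A^nv:n\in\Z\}$ is bounded is $v=0$; hence $\|A^n(H(p)-H(q))\|$ is bounded in $n$ if and only if $H(p)=H(q)$. Chaining the equivalences yields $H(p)=H(q) \Leftrightarrow p\in\Wc(q)$.

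The main step is the first one, since it requires a careful bookkeeping of the equivariance of $H$ and $P$ under the $\Gamma$-action; everything afterwards is a direct matching of the two characterizations of ``bounded orbit.''
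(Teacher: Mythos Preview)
Your proposal is correct and follows essentially the same route as the paper: establish that $H-P$ is uniformly bounded via $\Gamma$-equivariance, use the semi-conjugacy $Hf^n=A^nH$ together with this bound to trade $\|Pf^n(p)-Pf^n(q)\|$ for $\|A^n(H(p)-H(q))\|$, then invoke the \CL\ on one side and the hyperbolicity of $A$ on the other. The only cosmetic difference is that you combine the two middle equivalences into a single triangle-inequality estimate, whereas the paper writes them as separate steps.
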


\begin{proof}
    Note that as $P$ and $H$ have the same action on the fundamental group,
    the distance between $P(q)$ and $H(q)$ is uniformly bounded for all points
    $q\in\H$.  Then,
    \begin{align*}
        p\in\Wc(q) & \quad \spacedblarrow \\
        \|Pf^n(p) - Pf^n(q)\|
                &\quad \text{is bounded for all $n\in\Z$} \spacedblarrow \\
        \|Hf^n(p) - Hf^n(q)\|
                &\quad \text{is bounded for all $n\in\Z$} \spacedblarrow \\
        \|A^nH(p) - A^nH(q)\|
                &\quad \text{is bounded for all $n\in\Z$}
    \end{align*}
    where the first equivalence is due to the \CL\ and the last equivalence is
    due to the definition of $H$.  As $A$ is a hyperbolic linear map, the last
    condition can be satisfied if and only if $H(p)=H(q)$.
\end{proof}

\begin{prop}
    There is a continuous function $\sigma:\R^2\to\H$ such that $H\circ\sigma$
    is the identity on $\R^2$.
\end{prop}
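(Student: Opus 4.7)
The plan is to show that $H:\H\to\R^2$ is topologically a locally trivial fibration with fiber $\R$, after which the existence of a continuous section follows from the contractibility of $\R^2$.

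First I would establish surjectivity of $H$. The induced map $\bar H:\HG\to\T^2$ agrees with $\bar P$ on $\pi_1$, so the two are homotopic; since $\bar P_*:\pi_1(\HG)\to\Z^2$ is surjective, the image of $\bar H_*$ is all of $\Z^2$, which rules out $\bar H(\HG)$ being contained in a proper closed subset of $\T^2$ of dimension at most one. Hence $\bar H$ is surjective and so is $H$. Next, the fibers of $H$ are precisely the center leaves of $f$ by the \CL, and each such leaf is homeomorphic to $\R$: it is a connected $1$-manifold on which the cyclic group $\langle(0,0,\tfrac{1}{k})\rangle\cong\Z$ acts freely and properly by Proposition \ref{cmptleaf}, with circle quotient in the nilmanifold.

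The main step is local triviality of $H$. Fix $v_0\in\R^2$ and a preimage $p_0\in H^{-1}(v_0)$. Using Proposition \ref{int-unique} and its analog for $\Ws$ intersecting $\Wcu$, together with the transversality of the partially hyperbolic splitting at $p_0$, I would construct a small topological $2$-disk $D$ through $p_0$ transverse to $\Wc$ on which $H|_D$ is a homeomorphism onto an open neighborhood $V$ of $v_0$. Flowing $D$ along the center foliation then yields a local trivialization $H^{-1}(V)\cong V\times\R$ identifying $H$ with projection onto the first factor. Given such local trivializations everywhere, $H$ is a fiber bundle over the contractible paracompact base $\R^2$, and the standard fact that such bundles are trivial produces the global section $\sigma$.

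The main obstacle is the local triviality. The bounded difference $|H-P|<C$ tells us only about the large-scale behaviour of $H$, and we must combine this with the \GPS\ (which parametrizes nearby center leaves by nearby points on the transversal $D$) and with the observation, derived from the \CL, that each center leaf projects under $P$ to a subset of $\R^2$ of uniformly bounded diameter. Together these ingredients should force $H|_D$ to be both locally injective and open, hence a local homeomorphism onto its image, after which the fiber bundle structure and the section $\sigma$ follow by standard arguments.
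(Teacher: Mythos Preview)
Your approach is viable but takes a different route from the paper's, and you are making the local triviality step harder than it needs to be.

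The paper does not show that $H$ is a fiber bundle and then invoke triviality over a contractible base. Instead it builds a single \emph{global} transversal to the center foliation directly from \GPS. Fix any $p_0\in\H$; for an arbitrary $q\in\H$, \GPS\ gives unique points $x\in\Wu(p_0)\cap\Wcs(q)$ and $y\in\Ws(x)\cap\Wc(q)$, and one sets $\omega(q)=y$. The image $\omega(\H)$ is then a topological surface (a ``$us$-pseudoleaf'') meeting every center leaf exactly once, and $\sigma(v):=\omega(q)$ for any $q\in H^{-1}(v)$ is the desired section. A small transversal disk $D$ is used only at the very end, to check continuity of $\sigma$. The payoff of this approach is an explicit section whose image has the $us$-pseudoleaf structure, which is convenient for the later construction of the leaf conjugacy.

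Your bundle-theoretic route also works, but note that the ``main obstacle'' you identify largely dissolves once the previous proposition is in hand. Since the fibers of $H$ are exactly the center leaves, any small disk $D$ transverse to $\Wc$ has $H|_D$ injective; then invariance of domain (both $D$ and $\R^2$ are $2$-dimensional) makes $H|_{\operatorname{int} D}$ an open embedding. There is no need to reinvoke the bound $\lvert H-P\rvert<C$ or the \CL\ at this stage---those were already spent in identifying the fibers. Your surjectivity argument via $\pi_1$ is also more delicate than you indicate: you need that any homomorphism from the nilpotent group $\Gamma$ into the free group $\pi_1(\T^2\setminus\{\mathrm{pt}\})$ has cyclic image, so it cannot surject onto $\Z^2$. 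The paper sidesteps this by simply noting that a Franks semi-conjugacy is surjective.
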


\begin{proof}
    Fix any point $p_0\in\H$.  The \GPS\ of the invariant
    foliations of $f$ shows that for any $q\in\H$, there are unique points
    $x,y\in\H$ such that
    \[p_0 \stackrel{u}{\leadsto} x
          \stackrel{s}{\leadsto} y
          \stackrel{c}{\leadsto} q . \]
    That is, $x\in\Wu(p_0)\cap\Wcs(q)$ and $y\in\Ws(x)\cap\Wc(q)$.
    Define a map $\omega:\H\to\H$ by $\omega(q)=y$.  The image $\omega(\H)$ is
    a topological surface through the point $p_0$ which intersects every
    center leaf of $f$ exactly once.  In fact, it is a {\em $us$-pseudoleaf}
    as described in \cite{ham-thesis}.  $\omega$ is continuous as the
    stable, center, and unstable foliations of $f$ are continuous and
    transverse.  Further, due to the uniqueness of the intersections $x$ and
    $y$ above, points $q,q'\in\H$ lie on the same center leaf of $f$ if and
    only if $\omega(q)=\omega(q')$.

    As a semi-conjugacy, $H$ is surjective.
    For $v\in\R^2$ take any point $q\in H^{-1}(v)$ and define
    $\sigma:\R^2\to\H$ by $\sigma(v)=\omega(q)$.  This is well-defined as by the
    previous proposition the fibers of $H$ are center leaves, and therefore
    \[
        q,q'\in H^{-1}(v) \spacearrow
        \omega(q) = \omega(q').
    \]
    Moreover, $\omega(q)$ and $q$ lie on the same center leaf by the
    definition of $\omega$, and so $H\sigma(v)=H\omega(q)=H(q)=v$.
    That is, $\sigma$ is a one-sided inverse of $H$ as desired.  All that
    remains is to show that $\sigma$ is continuous.

    For $v\in\R^2$, fix a point a $q\in H^{-1}(v)$ and find a small compact
    topological disk $D$ through $q$ transversal to the center foliation such
    that $v$ lies in the interior of $H(D)$.
    $H|_D$ is continuous and injective and is therefore a homeomorphism onto
    its image.  Then $\sigma|_{H(D)}$ is continuous as the composition of
    $\omega$ and the inverse of $H|_D$.
\end{proof}

With the function $\sigma$ in hand, it is easy to construct leaf conjugacies on
the universal cover $\H$.

\begin{lemma}
    Let $\varphi_t$ be a flow generated by a non-zero
    vector field tangent to $\Ec$.  The homeomorphism
    \begin{equation} \label{simple-h}
        \H\to\H,\quad(x,y,z) \mapsto \varphi_{z} (\sigma(x,y))
    \end{equation}
    defines a leaf conjugacy between $f$ and $g$.
    More generally, for any continuous function,
    $\rho:\R^2\to\R$, the homeomorphism
    \begin{equation} \label{non-simple-h}
        \H\to\H,\quad(x,y,z) \mapsto \varphi_{\rho(x,y)+z} (\sigma(x,y))
    \end{equation}
    defines a leaf conjugacy.
\end{lemma}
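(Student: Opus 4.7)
The plan is to verify, for the general form $h(x,y,z) = \varphi_{\rho(x,y)+z}(\sigma(x,y))$, that $h$ is a homeomorphism $\H\to\H$, that it sends each center leaf of $g$ to a center leaf of $f$, and that $hg(\LL) = fh(\LL)$ for every center leaf $\LL$ of $g$. The simple case $\rho\equiv 0$ is subsumed; adding the continuous function $\rho$ merely reparameterizes along the center direction of $f$ and preserves all three properties.

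To show $h$ is a homeomorphism, continuity is immediate from continuity of $\sigma$ and $\varphi$. For injectivity, if $h(x_1,y_1,z_1)=h(x_2,y_2,z_2)$, applying $H$ and using the preceding two propositions ($H\sigma=\id$ and the fibers of $H$ are center leaves of $f$, so $H$ is constant on $\varphi$-orbits) gives $(x_1,y_1)=(x_2,y_2)$; then both points lie on a single center leaf of $f$, which by Proposition \ref{cmptleaf} is invariant under the nontrivial translation by $(0,0,\tfrac{1}{k})$ and therefore noncompact in $\H$, so $\varphi_t$ parameterizes it bijectively by $\R$ and $z_1=z_2$ follows. For surjectivity, given $p\in\H$, the point $\sigma(H(p))$ shares its $H$-image with $p$ and hence lies on the same center leaf of $f$, so $p$ is in the $\varphi$-orbit of $\sigma(H(p))$ and one solves for $z$. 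Since $\H$ is homeomorphic to $\R^3$, invariance of domain promotes the continuous bijection $h$ to a homeomorphism.

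For the remaining two properties, $h$ sends the vertical line $\{(x_0,y_0,z):z\in\R\}$, an arbitrary center leaf of $g$, to the full $\varphi$-orbit of $\sigma(x_0,y_0)$, which is a center leaf of $f$. For the conjugacy identity with $\LL$ this vertical line, $g(\LL)$ is the vertical line through $A(x_0,y_0) = (\lambda^{-1}x_0,\lambda y_0)$, so $hg(\LL)$ is the $f$-leaf through $\sigma(\lambda^{-1}x_0,\lambda y_0)$, while $fh(\LL)$ is the $f$-leaf through $f(\sigma(x_0,y_0))$. Applying $H$ and using the semi-conjugacy $Hf=AH$ together with $H\sigma=\id$ shows both basepoints have $H$-image $(\lambda^{-1}x_0,\lambda y_0)$; since fibers of $H$ are center leaves, the basepoints share an $f$-leaf, establishing $hg(\LL)=fh(\LL)$.

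The main obstacle is the bijectivity argument, which orchestrates every structural result assembled in the section: $H\sigma=\id$, the identification of $H$-fibers with center leaves of $f$, noncompactness of those leaves in $\H$ (from Proposition \ref{cmptleaf}), and invariance of domain for continuity of the inverse. Once bijectivity is in hand, the leaf conjugacy identity reduces to a short computation using the semi-conjugacy property $Hf=AH$.
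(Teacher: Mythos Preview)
Your argument for the leaf conjugacy identity is essentially identical to the paper's: both compute $Hf\sigma(x,y) = AH\sigma(x,y) = A(x,y) = H\sigma A(x,y)$ and conclude that $fh(\LL)$ and $hg(\LL)$ coincide as the common $H$-fiber over $A(x,y)$. The paper then dismisses the role of $\rho$ in one sentence, just as you do.

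Where you go further is in actually verifying that $h$ is a homeomorphism; the paper simply asserts this in the statement and proves only the leaf conjugacy relation. Your bijectivity argument is sound: $H\circ h(x,y,z)=(x,y)$ recovers the first two coordinates, and noncompactness of each center leaf in $\H$ (which you correctly extract from Proposition~\ref{cmptleaf}: the leaf through $p$ contains the unbounded sequence $(0,0,n/k)\cdot p$) forces $t\mapsto\varphi_t(\sigma(x,y))$ to be a bijection onto that leaf, pinning down $z$. Invariance of domain then handles continuity of the inverse. So you have filled a gap the paper leaves to the reader.

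One small slip: you write $A(x_0,y_0)=(\lambda^{-1}x_0,\lambda y_0)$, importing the diagonal form of $A$ from Section~\ref{bbox}. In Section~\ref{leafconj} the coordinates have been changed so that the lattice $\Gamma$ is standard, and $A$ need no longer be diagonal. This does not affect your argument, since you only use the relations $Pg=AP$ and $Hf=AH$, both of which hold for the general $A$; just drop the explicit identification.
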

\begin{proof}
    Let $h$ be the map defined in \eqref{simple-h}.
    The orbits of $\varphi_t$ are exactly the center leaves of $f$.
    
    Let $\LL$ be a center leaf of $g$.  It is of the form
    $(x,y)\times\R\subset\H$ for real numbers $x$ and $y$.  Then, $g(\LL) =
    A(x,y)\times\R$ and $hg(\LL)$ is the center leaf of $f$ through the point
    $\sigma A(x,y)$.

    On the other hand, $fh(\LL)$ is the center leaf of $f$ through the point
    $f\sigma(x,y)$ and, by the definition of $H$ and of $\sigma$,
    \[
        Hf\sigma(x,y) = AH\sigma(x,y) = A(x,y) = H\sigma A(x,y).
    \]
    Both $fh(\LL)$ and $hg(\LL)$ are the center leaf given by the pre-image
    $H^{-1}A(x,y)$,
    establishing the leaf conjugacy.

    The addition of a function $\rho$ only serves to slide the homeomorphism
    along center leaves.  It does not affect the images of complete center
    leaves and therefore the more general definition given by
    \eqref{non-simple-h} is also a leaf conjugacy.
\end{proof}

This gives a large family of leaf conjugacies on the universal covering space
$\H$.  To find a leaf conjugacy which will quotient down to the nilmanifold
$\HG$ requires a careful choice of both the flow $\varphi_t$ and the offset
function $\rho$.

Define $\varphi_t$ such that the flow is of constant speed along each
individual leaf and that the time-one map takes a point $p\in\H$ to
$(0,0,1)\cdot p$ which, by Proposition \ref{cmptleaf}, lies on the same center
leaf.  More precisely, define a length function $\ell$ by
\[  \ell:\H\to\R,  \quad  \ell(p)=d_c\bigl(p, (0,0,1)\cdot p\bigr)\]
where $d_c$ denotes distance as measured along a center leaf.
This function is continuous due to the continuity of the foliation.  Let $V_1$
be a continuous unit vector field tangent to $E^c$ and consider the re-scaled
vector field
\[ V(p) = \ell(p)V_1(p). \]
If $\varphi_t$ is the flow induced by $V$, then
\begin{align*}
    d_c\bigl(p, \varphi_t(p)\bigr) &= \ell(p)\cdot|t| \spacearrow \\
    d_c\bigl(p, \varphi_1(p)\bigr) &= \ell(p) \spacearrow \\
    \varphi_1(p) &= (0,0,\pm1)\cdot p
\end{align*}
By reversing the flow if necessary, we can ensure that
$\varphi_1(p)=(0,0,1)\cdot p$ as desired.

With $\sigma$ and $\varphi_t$ now fixed, define, for any continuous
$\rho:\R^2\to\R$, the leaf conjugacy
\[ h_\rho:\H\to\H, \quad (x,y,z)\mapsto\varphi_{\rho(x,y)+z}(\sigma(x,y)). \]
The definition of $\varphi_t$ immediately gives the following.

\begin{lemma}
    For any $\rho:\R^2\to\R$ and all $q\in\H$,
    \[ h_\rho\bigl((0,0,1)\cdot q\bigr) = (0,0,1)\cdot h_\rho(q). \]
\end{lemma}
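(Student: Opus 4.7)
The plan is to unpack both sides using the definitions of $h_\rho$, the group operation on $\H$, and the flow $\varphi_t$, and observe that the three ingredients fit together by design.

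First, I would compute $(0,0,1)\cdot q$ explicitly. Writing $q=(x,y,z)$ and using the formula $(a,b,c)\cdot(x,y,z)=(a+x,b+y,z+c+ay)$ for the Heisenberg product, the left-multiplication by $(0,0,1)$ simply shifts the $z$-coordinate:
\[
    (0,0,1)\cdot(x,y,z)=(x,y,z+1).
\]
In particular, the first two coordinates are unchanged, so the input $(x,y)$ to $\sigma$ and to $\rho$ is the same for $q$ and for $(0,0,1)\cdot q$.

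Next, I would apply the definition of $h_\rho$ to both sides. On the left,
\[
    h_\rho\bigl((0,0,1)\cdot q\bigr)
        =\varphi_{\rho(x,y)+z+1}\bigl(\sigma(x,y)\bigr),
\]
while $h_\rho(q)=\varphi_{\rho(x,y)+z}(\sigma(x,y))$. Using the flow property $\varphi_{s+t}=\varphi_s\circ\varphi_t$, I can factor the exponent as
\[
    \varphi_{\rho(x,y)+z+1}\bigl(\sigma(x,y)\bigr)
        =\varphi_1\Bigl(\varphi_{\rho(x,y)+z}\bigl(\sigma(x,y)\bigr)\Bigr)
        =\varphi_1\bigl(h_\rho(q)\bigr).
\]

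The final step invokes the defining property of $\varphi_t$, chosen in the paragraph just before the lemma: the flow was rescaled and oriented precisely so that $\varphi_1(p)=(0,0,1)\cdot p$ for every $p\in\H$. Applying this with $p=h_\rho(q)$ gives
\[
    \varphi_1\bigl(h_\rho(q)\bigr)=(0,0,1)\cdot h_\rho(q),
\]
which is the desired identity. There is no real obstacle here; the lemma is a direct verification, and the entire content of the statement is packed into the construction of $\varphi_t$ (together with Proposition~\ref{cmptleaf}, which guarantees that $p$ and $(0,0,1)\cdot p$ actually lie on a common center leaf so that such a flow exists).
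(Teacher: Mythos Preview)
Your proposal is correct and is exactly the verification the paper has in mind: the paper states only that ``the definition of $\varphi_t$ immediately gives the following,'' and your argument unpacks precisely that, using $(0,0,1)\cdot(x,y,z)=(x,y,z+1)$, the flow property, and the normalization $\varphi_1(p)=(0,0,1)\cdot p$.
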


To complete the proof of Theorem \ref{mainthm}, we need only find $\rho$ such
that the above relation holds for all $\gamma\in\Gamma$, not just for
$\gamma=(0,0,1)$.  This special $\rho$ will be constructed piece-by-piece.

\begin{figure}%
\centering
\subfloat[][]{\includegraphics{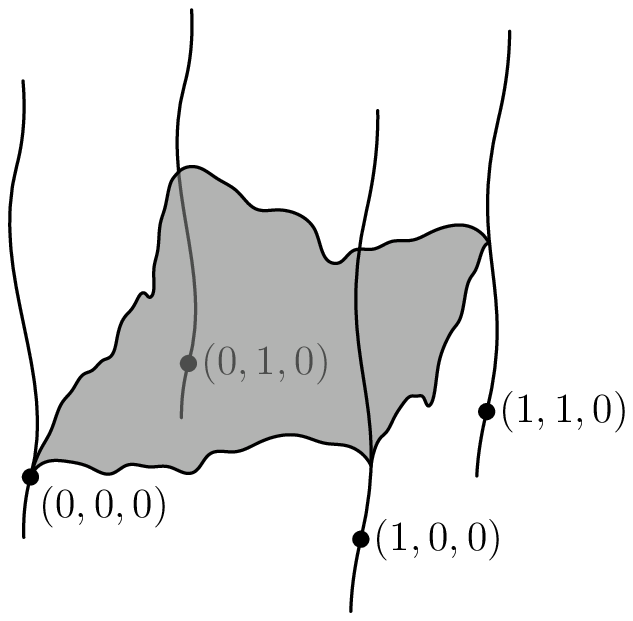}}
\qquad
\subfloat[][]{\includegraphics{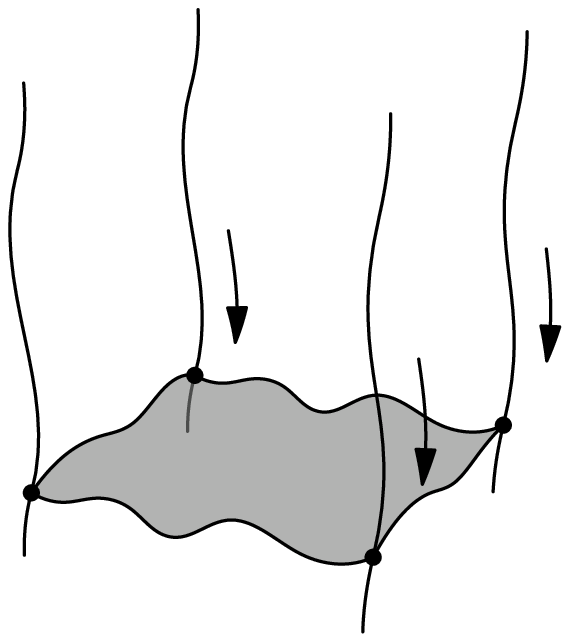}}
\caption{The section $\sigma:\R^2\to\H$ maps the square
$S=[0,1]\times[0,1]\subset\R^2$ to a surface in $\H$, and the four corners of
$S$ to the center leaves of the four points labelled in (a).  The effect of
$\rho$ is to slide the image along center leaves until it meets these four
lattice points, as in (b).
}
%\label{fig:cont}%
\end{figure}
\begin{figure}%
\ContinuedFloat
\centering
\subfloat[][]{\includegraphics{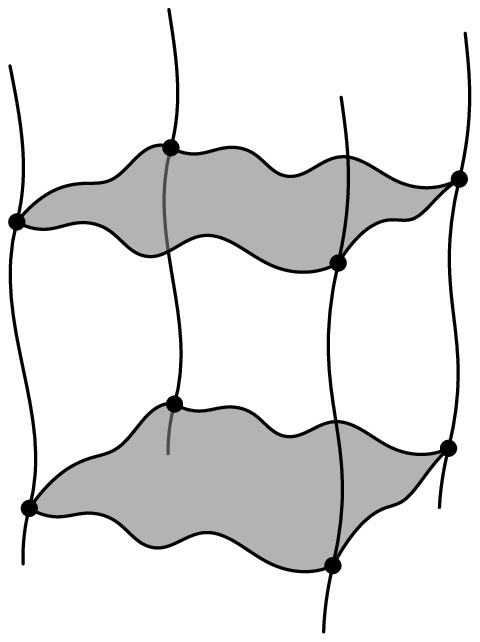}}
\qquad
\subfloat[][]{\includegraphics{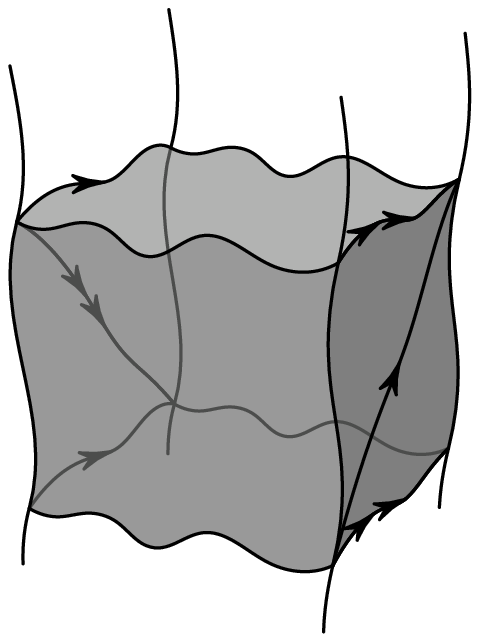}}
\caption[]{ {\small\em (continued)} Applying the deck transformation $p\mapsto (0,0,1)\cdot p$ moves
the surface to one which intersects the same center leaves, (c), and the
region found by taking all of the segments of center leaves between the two
surfaces, (d), gives a fundamental domain for the nilmanifold.
}
%\label{fig:cont}%
\end{figure}

\medskip

First, the assumption that $H(0,0,0)=(0,0)$ implies that there is a unique
value $r\in\R$ such that $\varphi_r(\sigma(0,0))=(0,0,0)$.  Set $\rho(0,0)=r$
so that $h_\rho(0,0,0)=(0,0,0)$.  The semi-conjugacy $H$ was defined to
satisfy the relation
\[ H\bigl((1,0,0)\cdot p\bigr) = H(p) + (1,0). \]
In particular, $H(1,0,0) = (1,0)$ and there is a unique value $\rho(1,0)$ such
that $h_\rho(1,0,0)=(1,0,0)$.  Fix $\rho(1,0)$ as such and then extend $\rho$
to all of the line segment $[0,1]\times\{0\}$ in some continuous way.  The
simplest extension would be 
\[ \rho(t,0) = (1-t)\rho(0,0) + t \rho(1,0). \]

\medskip

With $\rho$ now defined on $[0,1]\times\{0\}\subset\R^2$, $h_\rho$ is defined
on $[0,1]\times\{0\}\times\R\subset\H$ and it is a homeomorphism of that set
onto $H^{-1}([0,1]\times\{0\})$.

Set $\gamma=(0,1,0)\in\Gamma$ and define a homeomorphism
$h:[0,1]\times\{1\}\times\R\to H^{-1}([0,1]\times\{1\})$ by the relation
\[
    h(p) = \gamma\cdot h_\rho(\gamma^{-1}\cdot p).
\]
Note that the flow $\varphi_t$ commutes with multiplication by $\gamma$ and
therefore that
\[ h(x,1,t) = \varphi_t\bigl(h(x,1,0)\bigr) \]
for all $x\in[0,1]$ and $t\in\R$.
Both $h(x,1,0)$ and $\sigma(x,1)$ lie on the center leaf $H^{-1}(x,1)$.  Define
$\rho(x,1)$ as the unique value satisfying
\[ \varphi_{\rho(x,1)}\bigl(\sigma(x,1)\bigr) = h(x,1,0). \]
Then, $h_\rho$ is equal to $h$ on the domain of the latter, and as $h$ is
continuous, $\rho$ is continuous.  Thus, we have extended $\rho$ to the set
$[0,1]\times\{0,1\}\subset\R^2$ and
\[ h_\rho(\gamma\cdot p) = \gamma \cdot h_\rho(p) \]
when $p\in[0,1]\times\{0\}$ and $\gamma=(0,1,0)$.  In particular,
$h_\rho(0,1,0)=(0,1,0)$ and $h_\rho(1,1,0)=(1,1,0)$.

\medskip

Continuously extend $\rho$ so that its domain includes the line segment
$\{0\}\times[0,1]$ (again by linear interpolation, if you like).  Then, using
the same reasoning as above, there is a unique extension of $\rho$ to
$\{1\}\times[0,1]$ such that
\[ h_\rho\bigl((1,0,0)\cdot p\bigr) = (1,0,0)\cdot h(p) \]
for $p\in\{0\}\times[0,1]\times\R\subset\H$.  One must check that this
extension of $\rho$ agrees with the previous definition of $\rho(1,0)$ and
$\rho(1,1)$, but either definition implies that
\[ h_\rho(1,0,z) = (1,0,z) \]
and
\[ h_\rho(1,1,z) = (1,1,z) \]
for all integers $z$.  This uniquely determines the value of $\rho$ at these
two points, so the map $\rho$ is well-defined and continuous on the boundary
$\partial S$ of the square $S=[0,1]\times[0,1]$.  Extend $\rho$ continuously
to the interior of $S$.  Then, $\rho$ is defined on $S$ and $h_\rho$ is
defined on $[0,1]\times[0,1]\times\R\subset\H$.  We now extend this definition
to all of $\H$.

\medskip

Any point $p\in\H$ can be written as
\[ (a,b,0)\cdot(x,y,z) \]
where $(a,b,0)\in\Gamma$ and $(x,y)\in S$.  Using this, define $h:\H\to\H$ as
\[ h\bigl((a,b,0)\cdot(x,y,z)\bigr) = (a,b,0)\cdot h_\rho(x,y,z). \]
The decomposition is unique except for points of the form
\[  (a+1,b,0)\cdot(0,y,z)
        = (a,b,0)\cdot(1,0,0)\cdot(0,y,z)
        = (a,b,0)\cdot(1,y,z+y) \]
or
\[  (a,b+1,a)\cdot(x,0,z)
        = (a,b,0)\cdot(0,1,0)\cdot(x,0,z)
        = (a,b,0)\cdot(x,1,z). \]
The careful definition of $\rho$ on $\partial S$ ensures that in either of the
above cases, the two choices of decomposition give the same value for $h(p)$
and therefore $h$ is well-defined and continuous on all of the Heisenberg
group.

As multiplication by $(a,b,0)\in\Gamma$ commutes with the flow $\varphi_t$,
one has that $\varphi_t\circ h(x,y,z) = h(z,y,z+t)$ so that $h$ is equal to
$h_\rho$ for some $\rho$ now defined on all of $\R^2$.  Thus, $h$ is a
leaf-conjugacy and
\begin{equation} \label{goodh}
    h(\gamma\cdot p) = \gamma\cdot h(p)
\end{equation}
where $\gamma=(0,0,1)$.  Further, for points $(a,b,c)\in\Gamma$ and
$(x,y,z)\in S\times\R$,
\begin{align*}
    h\bigl((a,b,c)\cdot(x,y,z)\bigr)
        &= h\bigl((a,b,0)\cdot(0,0,c)\cdot(x,y,z)\bigr) \\
        &= (a,b,0)\cdot h_\rho\bigl((0,0,c)\cdot(x,y,z)\bigr) \\
        &= (a,b,0)\cdot(0,0,c)\cdot h_\rho(x,y,z) \\
        &= (a,b,c)\cdot h(x,y,z).
\end{align*}
For any lattice point $\gamma\in\Gamma$ and any point $p\in\H$, let
$p=\eta\cdot q$ be such that $\eta\in\Gamma$ and $q\in S\times\R$.
Then,
\[
    h(\gamma\cdot p)
        = h(\gamma\cdot\eta\cdot q)
        = \gamma\cdot\eta\cdot h(q)
        = \gamma\cdot h(\eta\cdot q)
        = \gamma\cdot h(p).
\]
This shows that the leaf conjugacy $h$ on the universal cover $\H$ descends to
the original nilmanifold $\HG$.  It is a conjugacy between the partially
hyperbolic system $f_0$ and the nilmanifold automorphism $g_0$ and the proof
of Theorem \ref{mainthm} is complete.

% Epilogue {{{1

\section*{Acknowledgments} The author would like to thank Charles Pugh, 
Enrique Pujals, and Ra\'ul Ures for helpful discussions during the preparation
of this paper.  This research was partly funded by CNPq of Brazil.

\bibliographystyle{plain}
\bibliography{dynamics}

\end{document}